\documentclass[11pt]{article}
\usepackage[utf8]{inputenc}
\usepackage{amsmath,amssymb,amsthm}
\usepackage{mathtools}
\usepackage{enumerate}
\usepackage{tikz}
\usetikzlibrary{arrows.meta,calc}
\usepackage{lipsum}

\usepackage{graphicx} 
\usepackage{tikz-3dplot}
\usepackage{comment}
\usepackage{url} 

\newtheorem{theorem}{Theorem}[section]
\newtheorem{lemma}[theorem]{Lemma}
\newtheorem{definition}[theorem]{Definition}

\newtheorem{remark}[theorem]{Remark}
\newtheorem{corollary}[theorem]{Corollary}
\newtheorem{theorem/definition}[theorem]{Satz/Definition}
\newtheorem{proposition}[theorem]{Proposition}

\usepackage{xcolor}
\definecolor{darkgreen}{rgb}{0,0.7,0}

\usepackage{xcolor}
\usepackage{etoolbox}
\usepackage[
  colorlinks=true,
  linkcolor=darkgreen,
  citecolor=darkgreen,
  urlcolor=black
]{hyperref}

\makeatletter
\patchcmd{\tableofcontents}
  {\@starttoc{toc}}
  {\begingroup\hypersetup{linkcolor=black}\@starttoc{toc}\endgroup}
  {}{}
\makeatother

\newcommand{\R}{\mathbb R}

\newcommand{\N}{\mathbb{N}}

\newcommand{\C}{{\cal C}}

\newcommand{\op}{\operatorname}

\newcommand{\ep}{\varepsilon}

\newcommand{\overbar}[1]{\mkern 1.5mu\overline{\mkern-1.5mu#1\mkern-1.5mu}\mkern 1.5mu}
\newcommand{\Hy}{\mathbb{H}}

\numberwithin{equation}{section}

\begin{document}

\title{Lipschitz continuity of the cut time for globally hyperbolic spacetimes}
\author{{\sc Alec Metsch$^{1}$} \\[2ex]
      $^{1}$ Universit\"at zu K\"oln, Institut f\"ur Mathematik, Weyertal 86-90, \\
      D\,-\,50931 K\"oln, Germany \\
      email: ametsch@math.uni-koeln.de \\[1ex]
      {\bf Key words:} Cut locus, conjugate locus, Lorentzian geometry\\[1ex]
      {\bf MSC Classification:}  53C50, 58C07, 49Q99}

\maketitle

\begin{abstract}
\noindent
We prove that, for a fixed point in a globally hyperbolic spacetime, the focalization time is locally Lipschitz continuous on the open subset of the future causal cone where it is finite. We also show that the cut time is locally Lipschitz continuous in a neighborhood of any timelike tangent vector whose associated geodesic is defined at least up to (and including) its cut time.
Furthermore, we derive quantitative estimates for the Lipschitz constant near the null cone and provide a criterion ensuring that the Lipschitz property extends to null directions. As a consequence, we show that the cut locus of a point has Hausdorff codimension at least $1$.
These results extend classical Lipschitz continuity results for complete Riemannian manifolds due to Itoh-Tanaka and Li-Nirenberg. Our approach follows the method of Itoh-Tanaka, suitably adapted to the Lorentzian setting.
\end{abstract}

\tableofcontents

\section{Introduction}

The cut locus plays a fundamental role in both Riemannian and Lorentzian geometry. For a complete Riemannian manifold, it is, for instance, the smallest set such that the squared Riemannian distance function is smooth on its complement (see, e.g., \cite{Cordero/McCann/Schmuckenschlaeger}); a similar statement holds in the Lorentzian setting. 

The cut locus was introduced by Whitehead \cite{Whitehead} in the Riemannian setting and has since been the subject of extensive research \cite{Sakai, Takeuchi, Gluck/Singer, Hebda, Itoh}. Among the resulting developments, Itoh and Tanaka \cite{Itoh/Tanaka} and, independently, Li and Nirenberg \cite{Li/Nirenberg} established the Lipschitz continuity of the cut time (also called distance function to the cut locus) of a point on its domain\footnote{By this we mean the open(!) set on which the function is finite}. In \cite{Itoh/Tanaka}, the first step towards this result was to establish the Lipschitz continuity of the focalization time (also called distance function to the conjugate locus) on its domain.

In contrast, in the Lorentzian setting no comparable regularity result for the cut time is known. Here, the role of the Riemannian distance function is played by the time separation function. So far, only continuity has been established \cite{Ehrlich}. Following the general strategy of \cite{Itoh/Tanaka}, our main result extends the Lipschitz property of the cut time $\rho$ of a point \eqref{eqab} to globally hyperbolic spacetimes – see Theorem \ref{thmd} (but note the discussion below). In analogy with the Riemannian approach, we first prove the Lipschitz continuity of the focalization time $\lambda$ \eqref{eqac} as an intermediate step (Theorem \ref{thmc}).

Theorem \ref{thmc} can be viewed as the direct analogue of the corresponding Riemannian result: it shows that $\lambda$ is locally Lipschitz continuous on its relatively open domain (with respect to the future causal cone), and in particular also along null directions. The situation for Theorem \ref{thmd} is more delicate. First, it is not clear if the domain of $\rho$ is open; this may occur when the cut time of a tangent vector is finite, while the associated geodesic is not defined up to the cut time. Even the continuity result of \cite{Ehrlich} does not cover this situation. This issue is essentially an artefact of potential geodesic incompleteness (compare the analogous situation for non-complete Riemannian manifolds) and can be avoided by restricting attention to future-directed causal vectors for which $\rho$ is finite and the geodesic is defined up to the cut time.
 A second, more substantial difficulty is that we are only able to establish local Lipschitz continuity in timelike directions, whereas in directions approaching the null cone we obtain only estimates on the blow-up of the Lipschitz constant. However, if the cut point of a null vector occurs before the first conjugate point, then the Lipschitz property holds without restriction in a neighbourhood of this vector – see Corollary \ref{thmd}. In either case, it follows as in the Riemannian setting that the cut locus of a point has Hausdorff codimension at least $1$ (Corollary \ref{thmc}).
 
Whereas the results in \cite{Itoh/Tanaka} and \cite{Li/Nirenberg} cover the cut locus of embedded submanifolds, we will restrict ourselves only to the cut locus of single points. We
expect, however, that by combining the arguments of the present paper with
those in \cite{Itoh/Tanaka}, the result extends to suitable classes of spacelike submanifolds
without substantial additional difficulty.

Let us also mention the works of \cite{Figalli/Rifford/Villani}, where the authors prove local semiconcavity of the cut time in dimension two under appropriate convexity assumptions on the so-called tangent focal locus, and \cite{Castelpietra/Rifford}, where the authors establish local semiconcavity of the focalization time and local Lipschitz continuity of the cut time for viscosity solutions of Hamilton-Jacobi equations associated with a certain class of Hamiltonians.

\subsection{Setting and results}

\noindent To state our results, we now describe the precise setting and introduce the necessary definitions. Throughout, $(M,g)$ denotes an $n$-dimensional globally hyperbolic spacetime with time separation function $d$. For a point $x\in M$, we will denote by $\C_x$ the future cone, i.e.\ the set of all future directed causal vectors, and we set $|v|_g:=\sqrt{|g(v,v)|}$, $v\in T_xM$. The geodesic emerging from $x$ with initial velocity $v\in T_xM$ is denoted by $c_v$.

\begin{definition}[Cut time, cut locus and focalization time]\rm
Fix $x\in M$.
\begin{enumerate}[(i)]
    \item  The \emph{cut time} is the function $\rho:\C_x\to (0,\infty]$ defined by
    \begin{align}
    \rho(v):=\sup\{t\geq 0\mid d(x,c_v(t))=t|v|_g\}.\label{eqab}
    \end{align}
    If $c_v$ only exists up to some $t>0$, then $\rho(v)\leq t$.
    The point $c_v(\rho(v))$ – if it exists – is called a \emph{cut point of} $x$. The set of cut points is denoted by $\op{Cut}(x)$.
    \item The \emph{focalization time} is the function $\lambda:\C_x\to (0,\infty]$ defined by
    \begin{align}
        \lambda(v):=\inf\{t\geq 0\mid c_v(t) \text{ is conjugate to $x$ along } c_v\}. \label{eqac}
    \end{align}
\end{enumerate}
\end{definition}
$\rho$ and $\lambda$ are easily seen to be $(-1)$-homogeneous. Since $(M,g)$ is globally hyperbolic, it is well-known that $\rho$ is continuous at $v$ unless $\rho(v)$ is finite and $c_v(\rho(v))$ is not defined \cite{Ehrlich}. The proof is analogous to the (complete) Riemannian case (where the cut time is everywhere continuous since geodesics are defined globally) once Theorem \ref{thme} has been established – which, however, is tedious for null directions. Similarly, $\lambda$ is continuous at $v$ unless $\liminf_{\C_x\ni w\to v} \lambda(w)<\infty$ but $c_v$ is not defined up to that point. As I am not aware of any (simple) proof of this statement, we include one in the appendix.

This proof forms part of a short survey collecting classical results on the relation between conjugate points, the Lorentzian index form, and the maximality of geodesics, which will be used frequently in the paper. It also includes the continuity of $\rho$. Most of these results are well known and can be found in \cite{Ehrlich} and \cite{ONeill}; however, for the convenience of the reader, and since the existing literature does not provide exactly the form needed here, we present them in a version tailored to our specific applications.
\medskip

We now state our results. Concerning the continuity of $\rho$, we start with a simple but useful and interesting proposition. Although the result is perhaps well known (at least in the Riemannian setting), we have not found an explicit reference.  We therefore state it here and include a proof in the appendix for completeness. The result can be viewed as a consequence of the first variation formula.

\begin{proposition}[Semiconvavity of the cut time] \label{li}
    Fix $x\in M$ and let $v_*\in T_xM$ be future directed timelike. Suppose that $\rho(v_*)<\lambda(v_*)$ and that $c_{v_*}(\rho(v_*))$ is defined. Then $\rho$ is locally semiconcave near $v_*$.
\end{proposition}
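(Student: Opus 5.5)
Near $v_*$ I would write $\rho$ as a minimum of finitely many smooth functions. Since $\rho(v_*)<\lambda(v_*)$ and $y_*:=c_{v_*}(\rho(v_*))$ exists, the standard cut point dichotomy from the appendix applies (the Lorentzian analogue of Theorem \ref{thme}). In a globally hyperbolic spacetime, a cut point of a timelike geodesic reached before the first conjugate point is joined to $x$ by at least two distinct maximizing timelike geodesics. So there is $w_*\neq \mu v_*$, with $w_*$ future directed timelike, such that $c_{w_*}(1)=y_*$ and $|w_*|_g=d(x,y_*)=\rho(v_*)|v_*|_g$. By continuity of $\rho$ (which holds at $v_*$ by the assumptions) and compactness of maximizing geodesics (global hyperbolicity), the set $W$ of such $w_*$ is compact. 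By a standard argument it is finite once we work near $v_*$ and use nonconjugacy along the maximizing geodesics, which follows from maximality and the index form results. In fact I only need upper barriers, so finiteness is not needed.

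\textbf{Upper barriers.} I would use the $(-1)$-homogeneity to reduce to the unit hyperboloid, or simply work in $T_xM$ directly. For $v$ near $v_*$, set $t\mapsto F(v,t):=d(x,c_v(t))-t|v|_g$. Then $\rho(v)=\sup\{t: F(v,t)=0\}$, and $F\le 0$ by the reverse triangle inequality. For each $w\in W$, the map $\exp_x$ is a local diffeomorphism near $w$, since $y_*$ is not conjugate along $c_w$ (this uses $\lambda(w)>1$, from maximality of $c_w$ with $y_*$ before the cut time of $w$... strictly this needs the index form argument). Hence $\phi_w(y):=|(\exp_x|_{U_w})^{-1}(y)|_g$ is a smooth local lower bound of $d(x,\cdot)$ near $y_*$. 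Define $\tau_w(v)$ as the solution $t$ near $\rho(v_*)$ of $\phi_w(c_v(t))=t|v|_g$. At $(v_*,\rho(v_*))$ the $t$-derivative of $G_w(v,t):=\phi_w(c_v(t))-t|v|_g$ equals $-g(\nabla\phi_w,\dot c_{v_*})-|v_*|_g$. The gradient of $\phi_w$ at $y_*$ is minus the unit tangent $\dot c_w(1)/|w|_g$, so this derivative is $g(\dot c_w,\dot c_{v_*})/|w|_g-|v_*|_g$. By the reverse Cauchy–Schwarz inequality this is strictly negative (here I use $\dot c_w\not\parallel \dot c_{v_*}$ and that both are timelike — the key place where timelikeness of $v_*$ enters). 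The implicit function theorem then gives a smooth $\tau_w$ near $v_*$.

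\textbf{Comparison and conclusion.} Next I would show $\rho\le \tau_w$ near $v_*$. For $t>\tau_w(v)$ close to $\tau_w(v)$, $G_w(v,t)>0$, so $d(x,c_v(t))\ge\phi_w(c_v(t))>t|v|_g$, which means $c_v$ is no longer maximizing; hence $\rho(v)\le\tau_w(v)$. Together with $\rho(v_*)=\tau_w(v_*)$, this gives smooth upper support functions at $v_*$. For local semiconcavity I need such support functions at every $v$ near $v_*$, with uniform $C^2$ bounds. Here I would use continuity of $\rho$ together with stability of the hypotheses. Nearby $v$ still satisfy $\rho(v)<\lambda(v)$, since $\lambda$ is continuous (appendix) and $\rho$ is continuous at $v_*$. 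Their cut points $c_v(\rho(v))$ are defined and lie near $y_*$. By compactness, the second maximizing geodesics $w$ for such $v$ lie in a small neighbourhood of $W$, and the construction above yields barriers $\tau_{w}$ whose $C^2$ norms are controlled uniformly: the nondegeneracy constants (invertibility of $d\exp_x$, and the negativity bound from reverse Cauchy–Schwarz) are uniform on a compact neighbourhood. A function admitting smooth upper supports with uniformly bounded second derivatives at every point is locally semiconcave.

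\textbf{Main obstacle.} The delicate step is the uniform negativity of $\partial_tG_w$. For nearby $v$, the second geodesic $w$ could a priori collapse onto the direction of $v$. This is excluded by $\rho(v_*)<\lambda(v_*)$: if $w_k$ and $v_k$ (suitably scaled) converged to the same vector, then $\exp_x$ would fail to be injective near a nonconjugate vector, contradicting the local diffeomorphism property. I would make this step rigorous first.
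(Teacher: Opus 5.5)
\textbf{There is a genuine gap in the construction of the barrier $\phi_w$.} The barrier needs $\exp_x$ to be a local diffeomorphism near the second vector $w$, i.e.\ $y=c_v(\rho(v))$ must not be conjugate to $x$ along $c_w$. This does not follow from the hypotheses.

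Maximality of $c_{w_*}$ on $[0,1]$ only excludes conjugate points in the open interval $(0,1)$ (Proposition \ref{prod}). Since $y_*$ is joined to $x$ by two maximizing geodesics, it is exactly the cut point of $c_{w_*}$. So you only get $\lambda(w_*)\ge 1=\rho(w_*)$, not $\lambda(w_*)>1$. The assumption $\rho(v_*)<\lambda(v_*)$ concerns $c_{v_*}$ alone. Nothing prevents $y_*$ from being the first conjugate point along every other maximizing geodesic; already in the Riemannian analogue there are cut points reached by one nonconjugate and one conjugate minimizing geodesic.

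In that case $\phi_{w_*}$ does not exist. Moreover, as $v\to v_*$ the second vectors $w$ accumulate at conjugate vectors (Lemma \ref{lr}), so $(d_w\exp_x)^{-1}$ blows up. The uniform $C^2$ bounds on $\tau_w$, which you need for semiconcavity, are then lost. Your finiteness remark about $W$ rests on the same unjustified nonconjugacy. The collapse of $w$ onto $v$, which you single out as the main obstacle, is real but harmless: that is exactly Lemma \ref{lj}.

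\textbf{What is missing is a lower barrier for $d(x,\cdot)$ at $y$, with gradient $-\dot c_w(1)/|w|_g$, that does not use $d\exp_x$ at $w$.} Two options work:
\begin{enumerate}[(i)]
\item Take $z\mapsto d(x,c_w(1-\delta))+|\exp^{-1}_{c_w(1-\delta)}(z)|_g$ for a fixed small $\delta>0$. It is smooth near $y$, lies below $d(x,\cdot)$ by the reverse triangle inequality, equals it at $y$ by maximality of $c_w$, and has uniform bounds by compactness.
\item As the paper does, use the semiconvexity of $d$ on $I^+$ with the sub-gradient \eqref{eqsub}, which gives the quadratic lower barrier \eqref{eqao}. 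The paper then replaces your implicit function theorem by the hand-made argument of the semiconcavity criterion in Appendix \ref{App: B}, with Lemma \ref{lj} supplying the uniform transversality \eqref{eqcb}.
\end{enumerate}
With either barrier, your implicit-function/upper-support scheme goes through.

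\textbf{There is also a sign error to fix.} The correct derivative is $\partial_t G_w(v_*,\rho(v_*))=-\langle\dot c_w(1),\dot c_{v_*}(\rho(v_*))\rangle_g/|w|_g-|v_*|_g$. This is strictly \emph{positive} by the reverse Cauchy--Schwarz inequality, and positivity is precisely what your comparison step ($G_w(v,t)>0$ for $t>\tau_w(v)$) uses. The quantity you wrote down is negative for trivial reasons and would make that step fail.
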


The following two results extend Theorems A and B of \cite{Itoh/Tanaka} to the Lorentzian setting. 

\begin{theorem}[Lipschitz continuity of the focalization time]\label{thmc}
    Fix $x\in M$. The focalization time $\lambda$ is locally Lipschitz on its relatively open domain $\lambda^{-1}(\R)$. 
\end{theorem}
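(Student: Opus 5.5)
We follow the strategy of Itoh--Tanaka (Theorem A) and reduce the Lipschitz property to a uniform lower bound for a one-sided derivative of $\lambda$ along directions transversal to the rays. Fix $v_*\in\lambda^{-1}(\R)$ and set $t_*=\lambda(v_*)$. By the continuity of $\lambda$ established in the appendix, $\lambda$ is continuous near $v_*$ on $\C_x$, and $c_w$ is defined on $[0,\lambda(w)+\delta]$ for $w$ in a neighbourhood $U$ of $v_*$. We work with the exponential map $\exp_x$ on the open set $\{tw : w\in U,\ 0<t<\lambda(w)+\delta\}$. By $(-1)$-homogeneity it suffices to show that the function $f(w):=\lambda(w)$ satisfies
\[
|f(w_1)-f(w_2)|\le L\,\|w_1-w_2\|
\]
on a compact neighbourhood, for some auxiliary Euclidean norm on $T_xM$. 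Via the linear structure of $T_xM$, this includes null directions on the boundary of $\C_x$.

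\textbf{Step 1: the local structure of the tangent conjugate locus.} For $w$ near $v_*$, the point $\lambda(w)w$ is a zero of $F(u):=\det d(\exp_x)_u$, computed with respect to fixed frames. We fix $m:=\dim\ker d(\exp_x)_{t_*v_*}$. Near $t_*v_*$ we use the Jacobi tensor description $d(\exp_x)_{tw}=\frac1t A_w(t)$, where $A_w$ solves the Jacobi equation with $A_w(0)=0$ and $A_w'(0)=\mathrm{id}$. Following Itoh--Tanaka, we choose a $C^\infty$ family of $m$-dimensional subspaces $E(u)$ that are close to $\ker d(\exp_x)_{t_*v_*}$. We then show that $\lambda(w)$ is the smallest zero in $t$ of $\det$ of the $m\times m$ block of $A_w(t)$ after a smooth Lyapunov--Schmidt-type reduction; the reduced function is $h(w,t)$.

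\textbf{Step 2: the key monotonicity estimate.} The central claim is that $\partial_t$ of the smallest eigenvalue of the reduced symmetric operator is bounded away from $0$ near $(v_*,t_*)$. In the Lorentzian setting, restricted to the orthogonal complement of $\dot c_w$, the operator $A_w(t)^{-1}A_w'(t)$ is self-adjoint with respect to $g$. For timelike $w$, $g$ is positive definite on $\dot c_w^\perp$. The Wronskian/index-form identity then shows that the eigenvalues $\mu_i(w,t)$ of the relevant symmetric matrix $S_w(t)=A_w(t)^*A_w'(t)$ restricted to kernel directions pass through zero with nonzero speed. More precisely, $\partial_t\langle A_w(t)e,e\rangle$ equals $\langle A_w'(t)e,e\rangle$, which is nonzero at a conjugate point because $A_w'(t_*)$ is injective on $\ker A_w(t_*)$.

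We then write $\lambda(w)=\min_i\tau_i(w)$, where each $\tau_i$ is obtained from an eigenvalue branch. Eigenvalue branches are only Lipschitz and not smooth, but Lipschitz in $(w,t)$ together with a uniform bound $|\partial_t\mu_i|\ge c>0$ yields, via a Lipschitz implicit function argument (Clarke), that each $\tau_i$ is Lipschitz. A minimum of finitely many Lipschitz functions is Lipschitz.

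\textbf{Main obstacle: null directions.} For null $w$, the normal space $\dot c_w^\perp$ contains $\dot c_w$ and $g$ is degenerate on it. Conjugacy must then be read on the quotient $\dot c_w^\perp/\R\dot c_w$, where $g$ is positive definite. This quotient does not vary continuously into the timelike directions, since there it jumps from dimension $n-2$ to $n-1$. My first attempt will be to avoid the quotient entirely and work in $T_xM$ with $n\times n$ matrices using a fixed Riemannian background metric. I would use the symmetric structure $g(A_w(t)\cdot,A_w'(t)\cdot)$, which is symmetric for all $w$ by the Wronskian identity because $A_w(0)=0$. Transversality at the conjugate point should come from nondegeneracy of $g(A'\cdot,\cdot)$ on $\ker A(t_*)$, which holds because $\ker A(t_*)$ consists of spacelike or degenerate-in-quotient directions transverse to $\dot c_w$. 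I would also need to split off the $\dot c_w$ direction uniformly via the Gauss lemma, which gives $A_w(t)w=tw$ with no zero in $t>0$.

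If this uniform transversality fails at null $v_*$, the fallback is a rescaling/compactness argument. This argument would bound the difference quotient $(\lambda(w_1)-\lambda(w_2))/\|w_1-w_2\|$ along sequences converging to $v_*$, using $C^1$ convergence of Jacobi tensors.
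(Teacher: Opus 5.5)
\paragraph{Step 2 is wrong as written.}
Your route is genuinely different from the paper's: you follow the zero crossings of eigenvalues of a Wronskian-symmetric form and invert a monotone Lipschitz function. The paper never looks at the tangent conjugate locus. For timelike $v$ it builds a broken Jacobi field $X_s$ on $[0,\lambda(v)+s]$ with $I(X_s,X_s)\ge C_1^{-2}s/2$ (Theorem~\ref{thmb}). It transfers the coefficients of $X_s$ in a fixed parallel frame to $c_w$ (Lemma~\ref{lb}) and concludes $\lambda(w)\le\lambda(v)+C|v-w|_h$ from Proposition~\ref{prodd}. Your route could work, but the step carrying all the weight is wrong. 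Injectivity of $A_w'(t_*)$ on $\ker A_w(t_*)$ does not make $\langle A_w'(t_*)e,e\rangle$ nonzero, and that quantity is not the $t$-derivative of any eigenvalue of $S_w$. The correct crossing-form computation is
\[
\partial_t\, g\bigl(A_w(t)e,A_w'(t)e\bigr)=|A_w'(t)e|_g^2-\bigl\langle R(A_w(t)e,\dot c_w)\dot c_w,A_w(t)e\bigr\rangle_g ,
\]
which for $e\in\ker A_w(t_*)$ equals $|A_w'(t_*)e|_g^2$. What you need is that this is \emph{positive}. That means $\dot J(t_*)=A_w'(t_*)e$ must be spacelike, i.e.\ orthogonal to $\dot c_w$ and not a multiple of it. This is exactly the fact established in the Remark before Theorem~\ref{thmb} and in Proposition~\ref{prod}.

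There is a second problem in Step 2. $S_w=A_w^*A_w'$ also has zero eigenvalues wherever $A_w'$ is singular, and these are not conjugate times. This happens globally: on the unit sphere $A^*A'=\sin t\cos t$ in normal directions, vanishing at $t=\pi/2<\pi$. It can happen even locally, since $\ker A'_{v_*}(t_*)\neq 0$ can occur at the conjugate time itself. So $\lambda=\min_i\tau_i$ is false as stated. The Lyapunov--Schmidt reduction meant to remove these branches is never specified. Showing that it yields a symmetric block with definite $t$-derivative is the whole content of the argument.

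\paragraph{Null directions are not handled.}
The theorem includes null $v_*$, and there you only list attempts. Step~2 relies on $g$ being positive definite on $\dot c_w^\perp$, which degenerates as $w$ approaches the cone. So you do not even get constants uniform on the timelike side that could then be extended by continuity. Your ``first attempt'' asks for nondegeneracy of $g(A'\cdot,\cdot)$ on $\ker A(t_*)$, justified by properties of $e$. But the relevant form is $e\mapsto g(A'e,A'e)$, and nondegeneracy would not suffice anyway: with an indefinite crossing form, conjugate points can disappear under perturbation. For example, $\det\begin{pmatrix}t&\epsilon\\ \epsilon&-t\end{pmatrix}<0$ for all $t$ when $\epsilon\neq 0$. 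Definiteness is needed, and it comes from $A'e$ being spacelike.

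One way to close the gap within your framework:
\begin{enumerate}[(1)]
\item Use the full tensor $A_w(t)e=J(t,w,e)$ on $T_xM$. Then conjugacy is $\det A_w(t)=0$ for timelike and null $w$ alike, with no quotient.
\item Take the form $Q_w(t)(e,f)=g\bigl(A_w(t)e,(A_w'(t)-\Lambda A_w(t))f\bigr)$. Here $\Lambda$ is a fixed $g$-symmetric endomorphism, written in a parallel orthonormal frame, for which $A_{v_*}'(t_*)-\Lambda A_{v_*}(t_*)$ is invertible.
\item By Lemma~\ref{ld}, $Q$ is symmetric, and $\ker Q_{v_*}(t_*)=\ker A_{v_*}(t_*)$. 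The Schur complement onto this kernel is an $m\times m$ symmetric $B_w(t)$ with $\partial_tB_w(t)\ge c\,\mathrm{Id}$ near $(v_*,t_*)$, by the spacelikeness above.
\item Use lower semicontinuity of $\lambda$ to exclude earlier conjugate times. Then $\lambda(w)$ is the zero of the largest eigenvalue of $B_w(\cdot)$, hence Lipschitz by Weyl's inequality.
\end{enumerate}

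The paper avoids this issue differently. Because it works in the fixed frame $E_i(t,v)$ rather than in $\dot c_v^\perp$, its constants $C_1,C(R),C_2$ are uniform on a compact $\overbar{U}$ that contains null vectors. It proves the estimate only for timelike $v,w$, where Proposition~\ref{prodd} applies. It then reaches null directions by continuity of $\lambda$ (Lemma~\ref{cor2}) and density of timelike vectors.
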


Local semiconcavity of $\lambda$ can also be proved (see \cite{Castelpietra/Rifford} for the Riemannian case). As our main focus is the regularity of the cut time, we omit the proof.

\begin{theorem}[Lipschitz continuity of the cut time]\label{thmd}
    Fix $x\in M$ and let $v_*\in T_{x}M$ be future directed causal. Suppose that $\rho(v_*)<\infty$ and that $c_{v_*}(\rho(v_*))$ is defined. Let $|\cdot|_x$ be any norm on $T_xM$. Then there exists a constant $C>0$ and an open neighbourhood $U$ of $v_*$ such that 
\[
    \forall v,w\in U\cap \op{int}(\C_{x}):\ |\rho(v)-\rho(w)| 
    \leq 
    \frac{C|v-w|_{x}}{\min\{|v|^2_g,|w|^2_g\}}.
\]
In particular, if $v_*$ is timelike, then $\rho$ is Lipschitz in a neighbourhood of $v_*$.
\end{theorem}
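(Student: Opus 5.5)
We follow the Itoh–Tanaka strategy: split according to whether the cut point is a first conjugate point or is reached by two distinct maximal geodesics. By homogeneity we work on a compact transversal slice of $\C_x$ and restore scaling at the end; the factor $\min\{|v|_g^2,|w|_g^2\}^{-1}$ arises exactly from this normalisation near the null cone.

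First fix a small neighbourhood $U$ of $v_*$. By continuity of $\rho$ at $v_*$ (appendix), $\rho$ is bounded on $U$. Moreover all $c_v(\rho(v))$ for $v\in U$ stay in a compact set $K\subset J^+(x)$, using global hyperbolicity and the fact that $c_{v_*}$ is defined up to $\rho(v_*)$. For $v\in U\cap\op{int}(\C_x)$, the standard dichotomy from the appendix survey applies: either $\rho(v)=\lambda(v)$, or there exist at least two maximal geodesics from $x$ to $c_v(\rho(v))$.

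\textbf{Case A: $\rho(v)<\lambda(v)$.} Take a second maximal vector $v'\neq v$ with $c_{v'}(\rho(v'))=c_v(\rho(v))$ and $|v'|_g\rho(v')=|v|_g\rho(v)$. Near $v$, the cut time is bounded above by the time $t$ at which $d(x,c_w(t))$ ceases to equal $t|w|_g$. We estimate it by comparing with the smooth local time separation $d_{v'}$ along the branch near $v'$, using the first variation formula; this is the mechanism behind Proposition \ref{li}. The result is the one-sided bound $\rho(w)\le\rho(v)+C|w-v|_x/|v|_g^{2}$, where the constant comes from the angle between $\dot c_v$ and $\dot c_{v'}$ at the cut point. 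Reversing the roles of $v$ and $w$ (cut points near $w$ also lie in $K$) gives the other inequality. The difficulty is making the constant uniform: when $v'\to v$ the angle degenerates. This degeneration happens only near conjugate cut points, which Case B handles.

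\textbf{Case B: $\rho(v)=\lambda(v)$.} Here we use Theorem \ref{thmc}: $\lambda$ is locally Lipschitz, and $\rho\le\lambda$ everywhere. Following Itoh–Tanaka, set $E=\{v\in U:\rho(v)=\lambda(v)\}$. On segments between $v,w$ we combine three facts: the Lipschitz bound for $\lambda$ controls $\rho$ from above near $E$; Case A estimates hold away from $E$ with local constants; and the Case A constants stay bounded near points of $E$ through a quantitative form of the first variation argument. The key ingredient for the last fact is that the second maximal direction $v'$ must approach the kernel direction of $d\exp_x$ at $\lambda(v)v$. This yields a uniform one-sided bound $\rho(w)-\rho(v)\le C|w-v|_x/\min|\cdot|_g^2$. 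The integration is along straight segments in $\op{int}(\C_x)$, which is convex. Applying the bound in the reverse direction gives two-sided Lipschitz control.

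\textbf{Main obstacle.} The hardest step is the uniform control in Case A as cut points approach conjugate ones, with constants tracked in terms of $|v|_g$. Near null directions the Lorentzian normalisation degenerates: the unit-speed reparametrisation $v/|v|_g$ and the first variation term $g(\dot c_v,\cdot)$ each contribute a factor $|v|_g^{-1}$. To prove the uniform estimate we would first attempt a compactness and contradiction argument on the unit hyperboloid slice, producing sequences with blow-up ratio, and then pass to limit geodesics using Theorem \ref{thme}-type limit curve results. The final claim is immediate: if $v_*$ is timelike, $|v|_g$ is bounded below on a smaller $U$.
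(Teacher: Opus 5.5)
Your architecture matches the paper's. You bound $(\rho\circ v)'$ along segments on which $\rho<\lambda$, using that $\rho\circ v$ is locally Lipschitz by the first-variation argument of Proposition \ref{li}. You then cross $E=\{\rho=\lambda\}$ using $\rho\le\lambda$ and Theorem \ref{thmc}. But the step that carries the proof is missing: a derivative bound whose constant is \emph{uniform} on a fixed neighbourhood of $v_*$, even when $\rho(v_*)=\lambda(v_*)$ (this is Lemma \ref{la}). You call it the main obstacle and offer only a compactness/contradiction plan. Your Case A constants are governed by the hyperbolic angle $\phi$ between the two maximizing geodesics at the cut point, and no ``quantitative first variation'' keeps them bounded as $\phi\to 0$. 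Indeed, with a suitable second geodesic the first variation gives exactly
\[
|v|_g\,\rho'(s_0)=\frac{\langle Y,\,e_1-e_1(w)\rangle_g}{2\sinh^2(\phi/2)}.
\]
Here $Y=J(\rho,v,v_1-v_0)$ is the variation field at the cut point, and $e_1,e_1(w)$ are the normalised reversed velocities there. Since $|e_1-e_1(w)|_h$ is of order $\phi$, the naive bound is of order $|Y|_h/\phi$, which blows up precisely near $E$. A soft blow-up argument that passes to merged limit geodesics discards exactly the information needed to rescue this.

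The missing idea is a second-order cancellation in the numerator, which the paper takes from Itoh--Tanaka. It has four ingredients.
\begin{enumerate}[(1)]
\item The second geodesic $c_w$ must be chosen as a limit of second geodesics along $s_k\downarrow s_0$. Using semiconvexity of $d$ and optimality of limit sub-gradients, this gives $\dot c(s_0)\perp e_1-e_1(w)$, which is what yields the formula above.
\item With $X=J(\cdot,v,w-v)$, the conditions $\rho(v)=\rho(w)$ and $c_v(\rho)=c_w(\rho)$ give $|X(\rho)|_h=O(|v-w|_h^2)$ and $e_1(w)-e_1=-\dot X(\rho)/|v|_g+O(|v-w|_h^2/|v|_g)$ (Lemma \ref{lh}(iii)). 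Your remark that $v'-v$ approaches $\ker d\exp_x$ is only the first half of this.
\item The decisive step is the Wronskian identity of Lemma \ref{ld}. Since $X$ and $Y$ both vanish at $t=0$, one has $\langle Y,\dot X\rangle_g=\langle\dot Y,X\rangle_g$ at $t=\rho$. Hence the numerator is $O(|v-w|_h^2)$, even though $\dot X(\rho)$ is only $O(|v-w|_h)$.
\item Finally, combine $|v-w|_h\le C|\dot c_v(\rho)-\dot c_w(\rho)|_h$ (Lemma \ref{lh}(ii)), $|e_1-e_1(w)|_h\le C\max\{|e_1|_h,|e_1(w)|_h\}\,\phi$ (Lemma \ref{ll6}) and $\sinh(\phi/2)\ge\phi/2$. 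These bound $|v-w|_h^2/(|v|_g^2\sinh^2(\phi/2))$ by $C\max\{|e_1|_h^2,|e_1(w)|_h^2\}\le C|v|_g^{-2}$.
\end{enumerate}
This last step also shows where the factor $\min\{|v|_g^2,|w|_g^2\}^{-1}$ really comes from: the $h$-size of $g$-unit vectors near the null cone. It does not come from a homogeneity normalisation on a slice, as your proposal asserts without justification.
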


\begin{corollary}[Hausdorff dimension of the cut locus]\label{cor5}
    The cut locus of a point $x\in M$ has Hausdorff dimension at most $n-1$.
\end{corollary}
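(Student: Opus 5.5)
Following the Riemannian argument, I will write $\op{Cut}(x)$ as the image of a parametrisation defined on a set of Hausdorff dimension $n-1$, using Lipschitz maps, which do not raise Hausdorff dimension. Every cut point is $c_v(\rho(v))$ for some $v\in\C_x$ with $\rho(v)<\infty$ and $c_v(\rho(v))$ defined; by homogeneity I may normalise $v$. Fix an auxiliary Euclidean norm $|\cdot|_x$ on $T_xM$ and let $S$ be the corresponding unit sphere intersected with $\C_x$. This is a compact Lipschitz manifold with boundary of dimension $n-1$. Set
\[
D:=\{v\in S\mid \rho(v)<\infty,\ c_v(\rho(v)) \text{ defined}\},\qquad \Phi(v):=\exp_x(\rho(v)v),
\]
so that $\op{Cut}(x)=\Phi(D)$. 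The null part $\partial S$ has dimension $n-2$, so only its contribution needs separate care.

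\emph{Timelike directions.} By Theorem \ref{thmd}, every timelike $v_*\in D$ has an open neighbourhood $U\subset T_xM$ on which $\rho$ is Lipschitz. Shrinking $U$, I may assume that $\rho$ is bounded on $U$, by continuity at $v_*$ as recalled from \cite{Ehrlich}. I also want $w\mapsto \rho(w)w$ to map $U\cap D$ into a compact subset of the domain of $\exp_x$. For this I use that the domain of $\exp_x$ is open and contains $\rho(v_*)v_*$. Because $\exp_x$ is smooth, $\Phi$ is then Lipschitz on $U\cap D\cap S$. The set $\op{int}(S)$ is a $\sigma$-compact $(n-1)$-manifold, so countably many such neighbourhoods cover $D\cap\op{int}(S)$. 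Hence $\Phi(D\cap\op{int}(S))$ is a countable union of Lipschitz images of subsets of $\R^{n-1}$ and has Hausdorff dimension at most $n-1$.

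\emph{Null directions.} This is the main obstacle, because Theorem \ref{thmd} gives only the bound $C|v-w|_x/\min\{|v|_g^2,|w|_g^2\}$, which blows up at $\partial S$. Two approaches are available.

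\begin{enumerate}[(a)]
\item \emph{Direct bound on the null generators.} For null $v\in D$ the cut point $\Phi(v)$ lies on the null geodesic $c_v$. So $\Phi(D\cap\partial S)$ is contained in the image of the map $(v,t)\mapsto \exp_x(tv)$, restricted to $v\in\partial S$ and times $t$ in compact ranges. This map is smooth and its domain has dimension $(n-2)+1=n-1$. Its image therefore has Hausdorff dimension at most $n-1$, after exhausting by countably many compact pieces.
\item \emph{Stratification near the boundary.} Alternatively, one can work in the interior, where Theorem \ref{thmd} holds with constant $C/\delta^{4}$ on $\{|v|_g\ge\delta\}$, since $|v|_g^2\ge \delta^2$ for unit $v$. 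This yields countably many Lipschitz pieces, and Hausdorff dimension is countably stable.
\end{enumerate}

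I expect approach (a) to resolve the boundary contribution without needing the quantitative blow-up estimate. Combining the interior and null parts by countable stability gives $\dim_H \op{Cut}(x)\le n-1$.
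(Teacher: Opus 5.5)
Your proposal is correct and is essentially the paper's argument. Timelike directions are handled via the local Lipschitz continuity of $\rho$ from Theorem \ref{thmd}: the paper phrases this as $\op{Cut}_{tan}(x)\cap\op{int}(\C_x)$ being a local Lipschitz graph over the future unit sphere and only afterwards applies $\exp_x$, whereas you compose with $\exp_x$ first. Null directions are handled exactly as in your option (a), by placing their contribution in the image under $\exp_x$ of the $(n-1)$-dimensional null cone.

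Option (b) is not needed. On its own it only treats directions with $|v|_g\geq\delta>0$, so it never reaches the null directions, and the constant there should be $C/\delta^{2}$ rather than $C/\delta^{4}$.
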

\begin{proof}
    Fix any norm $|\cdot|_x$ on $T_xM$, and define the tangential cut locus
    \[
        \op{Cut}_{tan}(x) = \{\rho(v)v\mid v\in S_xM \text{ future directed causal},\ c_v(\rho(v)) \text{ is defined}\},
    \]
    where $S_xM$ denotes the unit sphere w.r.t.\ the $|\cdot|_x$.
    By the above theorem, $\op{Cut}_{tan}(x)\cap \op{int}(\C_{x})$ a local Lipschitz graph over an open subset (possibly empty) of the future unit sphere $S_xM\cap \op{int}(\C_x)$, and thus has Hausdorff dimension $0$ or $n-1$. Moreover, $\op{Cut}_{tan}(x)\backslash \op{int}(\C_{x})$ is contained in the $(n-1)$-dimensional future null cone. Thus, $\op{Cut}_{tan}(x)$ has Hausdorff dimension at most $n-1$, and hence so does $\op{Cut}(x)=\exp_{x}(\op{Cut}_{tan}(x))$.
\end{proof}

Note that Theorem \ref{thmc} claims Lipschitz continuity also around null directions. Unfortunately, not the same can be said about Theorem \ref{thmd} (although I am not aware of any counterexamples). However, if the cut point of a future null vector $v_*$ comes before the first conjugate point, one can refine the proof of Theorem \ref{thmd} to prove Lipschitz continuity around $v_*$.

\begin{corollary}[Lipschitz continuity to the null cone]\label{cor3}
    Fix $x\in M$ and let $v_*\in T_{x}M$ be future directed null. Suppose that $\rho(v_*)<\lambda(v_*)$ and that $c_{v_*}(\rho(v_*))$ is defined. Then $\rho$ is locally Lipschitz near $v_*$.
\end{corollary}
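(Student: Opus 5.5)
We work in a small neighbourhood $U$ of $v_*$ in $\C_x$ and split into two cases. The key input is that $\rho(v_*)<\lambda(v_*)$ is an open condition. Since $c_{v_*}(\rho(v_*))$ is defined, the geodesics $c_v$ for $v$ near $v_*$ are defined on $[0,\rho(v_*)+\delta]$ for some $\delta>0$. Continuity of $\rho$ (appendix, \cite{Ehrlich}) and lower semicontinuity of $\lambda$ (also from the appendix) then give $\rho(v)<\lambda(v)-\ep_0$ for all $v\in U$, for some $\ep_0>0$. Hence for every $v\in U$ the cut point $c_v(\rho(v))$ is non-conjugate. By the standard characterisation (Theorem \ref{thme} and the appendix survey), it is reached by at least two distinct maximising geodesics $c_v$ and $c_{v'}$ with $\rho(v)v=\exp_x^{-1}$-preimages. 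The case $v'$ null is allowed when $v$ is null.

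For $v,w\in U\cap\op{int}(\C_x)$ we want $|\rho(v)-\rho(w)|\le C|v-w|_x$, with no factor $\min\{|v|_g^2,|w|_g^2\}^{-1}$. The blow-up in Theorem \ref{thmd} comes from bounding the gradient of $t\mapsto d(x,\cdot)$, or from comparing $t|v|_g$ when $|v|_g\to0$. Near the null cone the time separation is not differentiable, so we avoid it.

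The plan follows Itoh--Tanaka. We define the map $\Phi(v)=\exp_x(\rho(v)v)$. Since $\exp_x$ is a local diffeomorphism near $\rho(v)v$, with uniform bounds on $U$ because of the margin $\ep_0$, it suffices to control $\rho$ through the two-sheeted structure. Near each cut vector $\rho(v)v$ and its partner $\rho(v)v'$ we take local inverses $E_1,E_2$ of $\exp_x$, defined on a fixed-size ball around $q=\Phi(v)$. We then consider the function $F(y)=\tau_1(y)-\tau_2(y)$, where $\tau_i(y)$ is the Lorentzian length $|E_i(y)|_g$ when timelike. In the null case we instead use a time-function comparison: fix a smooth temporal function $T$ and compare, along each sheet, which branch reaches $y$ first.

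Because $E_1(q)\neq E_2(q)$ are distinct causal vectors, the first variation gives $dF_q\neq0$, uniformly on $U$. One uses the reverse triangle inequality strictness when the velocities at $q$ differ. So the cut locus near $q$ lies in the hypersurface $\{F=0\}$, transversal to the geodesic $c_v$. The cut time $\rho(w)$ for $w$ near $v$ is then bounded above by the parameter where $c_w$ hits $\{F=0\}$. This parameter depends Lipschitz-continuously on $w$ by the implicit function theorem, with constant depending only on the transversality, which is uniform on $U$. This upper bound $\rho(w)\le\rho(v)+C|v-w|_x$ and its symmetric counterpart give the claim.

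The main obstacle is the null situation. If $v$ is null (or nearly null), $F$ defined via $|E_i|_g$ is not smooth, since the square root degenerates; that is exactly the source of the factor $|v|_g^{-2}$ in Theorem \ref{thmd}. We would first replace $\tau_i$ by the smooth quantities $g(E_i(y),E_i(y))$, which are smooth without a square root. We then verify transversality of $c_v$ to the zero set of $g(E_1,E_1)-g(E_2,E_2)$ at $q$ via Gauss' lemma: the derivative along $\dot c_v$ is $2g(\dot c_v(\rho),\dot c_{v'}(\rho))$-type terms, which are nonzero for distinct future causal directions. The absence of conjugate points is what guarantees that $E_1,E_2$, and hence these functions and the transversality constants, are uniformly controlled in $U$.
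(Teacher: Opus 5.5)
\paragraph{There is a genuine gap: the second branch $E_2$ need not exist.}
Your argument needs a local inverse $E_2$ of $\exp_x$ near the partner vector $\rho(v)v'$, on a ball of uniform size and with uniform $C^2$ bounds. This requires $\rho(v)v'$ to be a regular point of $\exp_x$, i.e.\ $q=c_v(\rho(v))$ must \emph{not} be conjugate to $x$ along the partner geodesic $c_{v'}$. The hypothesis $\rho(v_*)<\lambda(v_*)$, propagated to $\rho<\lambda$ on $U$, only concerns the geodesics $c_v$ with $v\in U$. For small $U$ the partners $v'$ lie outside $U$ (for null $v_*$ the partner is a different null direction). Nothing prevents $\rho(v')=\lambda(v')$, i.e.\ a cut point that is non-conjugate along one maximizing geodesic but conjugate along another.

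In that configuration $E_2$ does not exist, so neither does the smooth function $G:=g(E_1,E_1)-g(E_2,E_2)$ on which your implicit-function argument rests. Your closing sentence, that absence of conjugate points controls $E_1,E_2$ uniformly on $U$, is justified only for $E_1$. Replacing $\tau_2$ by $d(x,\cdot)$ does not rescue the quantitative argument: the semiconvexity constants of $d$ degenerate at the null cone, which is exactly what you wanted to avoid. If you added the hypothesis that $q$ is non-conjugate along every maximizing geodesic, your route would go through, but that is strictly stronger than the corollary.

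Two smaller points. First, the claim that the cut locus near $q$ lies in $\{F=0\}$ is false when $q$ has further partners. You only need the one-sided fact that $G(c_w(t))>0$ forces $\rho(w)\le t$. Second, uniformity of your transversality constant $\partial_t G=2\rho\bigl(-|v|_g^2-g(\dot c_v,\dot c_{v'})\bigr)$ does not follow from pointwise distinctness of $v$ and $v'$. It needs a compactness argument.

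\paragraph{How the paper avoids this.}
The paper never uses a second branch. It reruns the proof of Lemma~\ref{la}, where the partner $w$ enters only through the sub-gradient $-\dot c_w(\rho(w))/|w|_g$ of $d(x,\cdot)$ at the single point $c(s_0)$. That sub-gradient is valid whether or not $c_w$ is conjugate there. It is combined with differentiability of $\rho\circ v$ at $s_0$ (available a.e.\ by Proposition~\ref{li}) and with Jacobi field estimates along $c_v$ alone.

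The only uniform input is Lemma~\ref{lj}: $\langle \dot c_v(\rho(v)),\dot c_w(\rho(w))\rangle_g\le-\varepsilon$. Its proof uses non-conjugacy along $c_{v_0}$ only, to exclude $w_0=v_0$ in the limit. This is precisely the uniform version of your transversality constant. It gives $\cosh\phi\ge\varepsilon/|v|_g^2$ for the hyperbolic angle between $e_1$ and $e_1(w)$, hence $\sinh^2(\phi/2)\ge\varepsilon/(8|v|_g^2)$. That cancels the factor $|e_1-e_1(w)|_h^2=|\dot c_v-\dot c_w|_h^2/|v|_g^2$ in \eqref{eqz}.
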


The plan of this paper is as follows. In Section \ref{preliminaries}, we describe the setting and introduce the necessary definitions and notation. In Sections \ref{sec: Lipsch. focal.} and \ref{sec: Lipsch. cut}, we prove the Lipschitz continuity of the focalization and cut time, respectively. Corollary \ref{cor3} is proved in Section \ref{sec: ext. to null}. Appendix \ref{App: A} contains a survey of classical results on cut points and conjugate points, Appendix \ref{App: B} contains the proof of Proposition \ref{li}, and Appendix \ref{App: C} contains a deferred proof.

\section{Preliminaries}\label{preliminaries}
In this paper, $(M,g)$ always denotes a globally hyperbolic spacetime. We assume the reader is familiar with the basic theory and standard notions of Lorentzian geometry. More specific material on conjugate points, Lorentzian index forms, and related topics is recalled in the Appendix \ref{App: A}. In our convention, $g$ has signature $(-,+,\ldots,+)$. Future directed (resp.\ past directed) curves are assumed to be piecewise smooth. We use the standard notation $J^+$ (resp.\ $I^+$) for the sets of points that are causally (resp.\ chronologically) related.

For $x\in M$, we denote by $\C_x\subseteq T_xM$ the cone of future directed causal vectors. Note that $\C_x$ is closed ($0$ being a future directed causal vector), and that $\op{int}(\C_x)$ is the set of future directed timelike vectors. We also set $\C:=\{(x,v)\in TM\mid v\in \C_x\}$. We can equip $M$ with a complete Riemannian metric $h$, which will be fixed from now on. All balls $B_r(x)$, $x\in M$, $r>0$, as well as all Lipschitz constants, are understood with respect to $h$. The $h$- and $g$-norm of $v\in T_xM$ are denoted by $|v|_h$ and $|v|_g:=\sqrt{|g(v,v)|}$. If $x$ is fixed, we denote by $c_v$ the $g$-geodesic starting at $x$ with initial velocity $v \in T_xM$.

The Lorentzian time separation function is denoted by $d$:
\[
    d(x,y):=
    \begin{cases}
        \sup_\gamma {\ell}_g(\gamma),&\text{ if } (x,y)\in J^+,
        \\
        0,& \text{ else,}
    \end{cases}
\]
where the supremum is taken over all future directed causal curves $\gamma:[a,b]\to M$ connecting $x$ to $y$ and the (Lorentzian) length ${\ell}_g(\gamma)$ is defined as
\[
    \int_a^b \sqrt{|g(\dot \gamma(t),\dot \gamma(t))|}\, dt.
\]
Since $M$ is globally hyperbolic, it is well-known that $d$ is continuous, and that each pair $(x,y)\in J^+$ can
be connected by a maximizing geodesic (\cite{ONeill}, Chapter 14, Proposition 19 and Lemma 22).
In fact,  it can be shown that $d$ is locally semiconvex on $I^+$ (see below for the definition). Moreover, for each maximizing geodesic $c:[a,b]\to M$ from $x$ to $y\in I^+(x)$, a sub-differential of $d(x,\cdot)$ at $y$ is given by
\begin{align}
    -\Big \langle \frac{\dot c(b)}{|\dot c(b)|_g},\cdot\Big\rangle_g\in T_y^*M. \label{eqsub}
\end{align}
In other words, $-\frac{\dot c(b)}{|\dot c(b)|_g}$ is a sub-gradient. These results are proved for instance in \cite{McCann2} or \cite{Metsch3}.

Recall that a function defined on an open subset of $M$ is \emph{locally semiconvex} if it is so when computed in some (hence any) chart. The set of \emph{sub-differentials} of a function $f:U\subseteq M\to \R$ at $x\in U$ is defined as
\[
    \partial^-f(x) := \{p\in T_x^*M\mid f(\exp_x(v))\geq f(x)+p(v)+o(|v|_h)\},
\]
and \emph{sub-gradients} are defined by duality via the canonical isomorphism $T_xM\to T_x^*M,\ v\mapsto g(v,\cdot)$.

If $f$ is locally semiconvex, then $\partial^-f(x)\neq \emptyset$ for all $x\in U$. In fact, there exists an open neighbourhood $V$ of $x$ and $K,r>0$ such that, for any $y\in V$, any $v\in T_yM$ with $|v|_h<r$ and any sub-differential $p$ of $f$ at $y$, we have
\begin{align}
    f(\exp_y(v))\geq f(y)+p(v)-K|v|_h^2. \label{eqao}
\end{align}
It is well-known that locally semiconvex functions are locally Lipschitz continuous.\\

In the present paper, we frequently use the continuity properties of $\lambda$ and $\rho$ in the sense of Corollary \ref{cor2} and Lemma \ref{lq}, as well as the compactness property of maximizing geodesics in the sense of Lemma \ref{lr}.

\section{Lipschitz continuity of the focalization time}\label{sec: Lipsch. focal.}

In this section, we will prove Theorem \ref{thmc}. In order to so, fix $x\in M$ and $v_*\in T_{x}M$ future directed causal with $\lambda(v_*)<\infty$. Recall that $\lambda$ is continuous in a neighbourhood of $v_*$ (Lemma \ref{cor2}). Combining the continuity of $\lambda$ with a connectedness argument, it suffices to prove the following in order to establish Theorem \ref{thmc}.
\begin{quote}
There exists an open neighbourhood $U\subseteq T_{x}M$ of $v_*$ and a constant $C>0$ such that, for all $v\in U$ future directed timelike, we have:
\begin{align}
\lambda(w)-\lambda(v)\leq C|v-w|_h \label{eqam}
\end{align}
holds for any $w\in U$ future directed timelike and sufficiently close to $v$.
\end{quote}

We define the set $U$ as follows. Let $U\Subset T_{x}M$ be an open neighbourhood of $v_*$, and let $r>0$ such that $\lambda(v)<r$ and $c_v(r)$ is defined for all $v\in \overbar U$.
 Let $E_i$, $i=1,...,n$, be an orthonormal basis of $T_{x}M$ w.r.t.\ $g$ such that $E_1$ is future directed timelike. Let $E_i(t,v)$, $t\in [0,r]$, $v\in \overbar U$, denote the parallel transport of $E_i$ along the geodesic $c_v$. 
\medskip 

For $v \in \overbar U$, the geodesic $c_v$ is defined on $[0,r]$. Hence, whenever $s\in [0,r]$, we can consider the Lorentzian index form $I_s : V_s^\perp(c_v)\times V_s^\perp(c_v)\to \mathbb{R}$ along $c_v$ on $[0,s]$ (cf.\ \eqref{eqh}). Note that elements of $V_s^\perp(c_v)$ are only assumed to be piecewise smooth. We suppress the dependence on $v$ in the notation of $I_s$. 

The following lemma corresponds to (1.17) in \cite{Itoh/Tanaka}.

\begin{lemma}[Lipschitz continuity of the index form]\label{lb}
    For every $R>0$, there exists a constant $C(R)>0$ with the following property.

Let $s\in[0,r]$ and $v,w\in \overbar U\cap\C_x$. 
Let $X\in V_s^\perp(c_v)$ and $Y\in V_s^\perp(c_w)$ be of the form
\[
    X(t)=\sum_{i=1}^n a_i(t) E_i(t,v), 
    \qquad 
    Y(t)=\sum_{i=1}^n b_i(t) E_i(t,w).
\]
Assume that, for all $i=1,\dots,n$ and for every common differentiability point $t\in[0,s]$ of $X$ and $Y$, one has
\[
    |a_i(t)|,\,|\dot a_i(t)| \le R,
    \qquad
    |a_i(t)-b_i(t)|,\,|\dot a_i(t)-\dot b_i(t)|
    \le R |v-w|_h .
\]
Then
\[
    |I_s(X,X)-I_s(Y,Y)| \le C(R)\,|v-w|_h .
\] 
Note that $I_s$ denotes both the Lorentzian index form along $c_v$ and along $c_w$.
\end{lemma}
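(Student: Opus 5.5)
Write the index form in coordinates given by the parallel frame. Along $c_v$ on $[0,s]$, with $X=\sum_i a_iE_i(\cdot,v)$, the index form reads (cf.\ \eqref{eqh})
\[
I_s(X,X)=\int_0^s \Big(\langle \dot X,\dot X\rangle_g-\langle R(X,\dot c_v)\dot c_v,X\rangle_g\Big)\,dt .
\]
Since the frame $E_i(\cdot,v)$ is parallel and $g$-orthonormal, we have $\langle \dot X,\dot X\rangle_g=\sum_i\ep_i\dot a_i^2$ with $\ep_1=-1$ and $\ep_i=1$ for $i\ge 2$. This term does not depend on $v$ at all. The curvature term becomes
\[
\sum_{i,j}a_ia_j\,\mathcal R_{ij}(t,v),\qquad \mathcal R_{ij}(t,v):=\langle R(E_i(t,v),\dot c_v(t))\dot c_v(t),E_j(t,v)\rangle_g .
\]
The same expressions hold for $Y$ with $b_i$ and $w$. (If $I_s$ is normalised with a factor such as $1/|v|_g$ or a sign convention, the same structure persists. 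I would check \eqref{eqh} and handle any normalising factor separately; see the last paragraph.)

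Next I would show that $\mathcal R_{ij}$ is Lipschitz in $v$, uniformly in $t\in[0,r]$. The map $(t,v)\mapsto (c_v(t),\dot c_v(t),E_i(t,v))$ is the solution of a smooth ODE system (geodesic plus parallel transport) with smooth dependence on the initial data $v$. It is defined on the compact set $[0,r]\times\overbar U$ by the choice of $U$ and $r$. Hence it is $C^1$ there, so $|\mathcal R_{ij}(t,v)-\mathcal R_{ij}(t,w)|\le L|v-w|_h$ and $|\mathcal R_{ij}|\le K$ uniformly. To make the difference estimate valid even when the segment from $v$ to $w$ leaves $\overbar U$, I would first shrink to a convex set, or else use a $C^1$ bound on a slightly larger compact neighbourhood. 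For $|v-w|_h$ large the estimate is trivial, because both index forms are bounded by a constant depending only on $R$, $r$ and $K$.

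Then I would estimate the difference term by term. For the kinetic part,
\[
|\dot a_i^2-\dot b_i^2|=|\dot a_i-\dot b_i|\,|\dot a_i+\dot b_i|\le R|v-w|_h\,(2R+R|v-w|_h).
\]
Here $|v-w|_h\le \op{diam}(\overbar U)$ is bounded, so this is at most $C|v-w|_h$. For the curvature part I would split
\[
a_ia_j\mathcal R_{ij}(v)-b_ib_j\mathcal R_{ij}(w)=(a_ia_j-b_ib_j)\mathcal R_{ij}(v)+b_ib_j\big(\mathcal R_{ij}(v)-\mathcal R_{ij}(w)\big),
\]
and bound each piece by $C(R)|v-w|_h$ using the bounds above. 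The integrands agree with these bounds at all common differentiability points, which are all but finitely many. Integrating over $[0,s]\subseteq[0,r]$ gives $|I_s(X,X)-I_s(Y,Y)|\le C(R)\,|v-w|_h$ with $C(R)$ independent of $s$, $v$ and $w$.

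The main obstacle is the uniform regularity and normalisation near the null cone, since $v,w\in\overbar U\cap\C_x$ may be null. If \eqref{eqh} is written in terms of unit-speed reparametrisations or divides by $|v|_g$, the $v$-dependence would fail to be Lipschitz at null vectors. In that case I would work with the affinely parametrised form above, which is smooth in $v$ up to and including null vectors. I would also check that the space $V_s^\perp$ does not enter the estimate beyond fixing the form of $X$ and $Y$, since only the pointwise coefficient bounds are used.
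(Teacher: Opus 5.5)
Your proposal is correct and is essentially the paper's argument: the paper collects the integrand $-\sum_{i,j}\bigl(\ep_i\delta_{ij}\dot a_i^2 - a_ia_j\langle R(E_i(t,v),\dot c_v)\dot c_v,E_j(t,v)\rangle_g\bigr)$ into one smooth function $F(a,\dot a,v,t)$ and uses its Lipschitz continuity on the compact set where the coefficients are bounded by $R+R\op{diam}(U)$, which is exactly what your kinetic/curvature splitting and your bound $|b_i|\le R+R|v-w|_h$ spell out by hand. The overall minus sign in the paper's definition \eqref{eqh} is irrelevant for the absolute difference, as you anticipated, and your compactness remark about the segment from $v$ to $w$ leaving $\overbar U$ covers the one point the paper leaves implicit.
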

\begin{proof}
    The function $F:\R^n\times \R^n \times \overbar U\times [0,r]\to \R$ defined by
    \[
        F(x,y,v,t):= -\sum_{i,j=1}^n \Big(\ep_i\delta_{ij} y_i^2- x_i x_j \Bigl\langle R(E_i(t,v),\dot c_v,\dot c_v),E_j(t,v)\Bigl\rangle_g\Big),
    \]
    is smooth (i.e.\ can be smoothly extended to a suitable open neighbourhood) and hence Lipschitz when $x$ and $y$ are restricted to the compact  ball of radius $R+R\op{diam}(U)$. Here, $\ep_i$ is $-1$ for $i=1$ and $1$ otherwise, and $\delta_{ij}$ is the Kronecker delta. Now note that, by \eqref{eqh},
    \[
        I_s(X,X)-I_s(Y,Y) = \int_0^s F(a(t),\dot a(t),v,t) - F(b(t),\dot b(t),w,t)\, dt,
    \]
    where $a(t):=(a_1(t),...,a_n(t))$, and $b(t):=(b_1(t),...,b_n(t))$. The lemma follows easily.
\end{proof}

\begin{remark}
    \rm 
    \begin{enumerate}[(i)]
        \item 
        Let $J(\cdot,v,w)$ denote the Jacobi field along $c_v$ with 
        \[
        J(0,v,w)=0 \quad \text{ and } \quad \dot J(0,v,w)=w.
        \]
        Since the differential of $\exp_{x}$ at $0$ is the identity, there exists $\ep>0$ such that $J(t,v,w)\neq 0$ for all $v\in \overbar U$, $|w|_h=1$ and $t\in [0,\ep]$ (Remark \ref{hdsdoa}). Since moreover $J$ and $\dot J$ depend continuously on $v$ and $w$ (Lemma \ref{lc}), it follows that the following supremum and infimum are attained:
        \begin{align*}
            &\sup\{|J(t,v,w)|_h+|\dot J(t,v,w)|_h\mid t\in [0,r],\ v\in \overbar U,\ |w|_h=1\},
            \\[5pt]
            &\inf\{|\dot J(t,v,w)|_g\mid  t\in (0,r],\ v\in \overbar U,\ |w|_h=1,\ J(t,v,w)=0\}.
        \end{align*}
        If $J$ denotes any Jacobi field as in the infimum, then it is easy to see\footnote{For any Jacobi field $X$ along a geodesic $c$, $\langle X,\dot c\rangle_g$ is affine linear.} that $\dot J\perp \dot c_v$. Moreover, $\dot J(t)$ cannot be $0$, since otherwise $J\equiv 0$. Finally, $\dot J(t)$ cannot be parallel to $\dot c_v(t)$ (which, by $\dot J\perp \dot c_v$, was only possible if $c_v$ is null) since otherwise $J(s)=\mu (s-t)\dot c_v(t)$ for some $\mu\in \mathbb{R}$, which would imply $J(0)\neq 0$. Thus, $\dot J(t)$ is spacelike and hence $|\dot J(t)|_g\neq 0$.
        \item 
        We choose $C_1$ such that the supremum (resp.\ infimum) is bounded from above (resp.\ below) by $C_1$ (resp.\ $1/C_1$) and such that 
        \begin{align}
            |E_i(t,v)|_h\leq C_1  \quad \text{ and }  \quad |\langle u,w\rangle_g|\leq C_1\, |u|_h \, |w|_h \label{eqal}
        \end{align}
        for all $t\in [0,r]$, $v\in \overbar U$, $u,w\in T_yM$ and $y=c_v(t)$. The existence of $C_1$ follows from a simple compactness argument.
    \end{enumerate}
\end{remark}

The following theorem is the Lorentzian analogue of Lemma 1.3 in \cite{Itoh/Tanaka}.

\begin{theorem}[Linear growth of the index form]\label{thmb}
    For any $v\in \overbar U$, there exists $s(v)>0$ with $\lambda(v)+s(v)\leq r$ such that, for any $s\in [0,s(v)]$, there is a vector field $X_s \in V_{\lambda(v)+s}^\perp(c_v)$, say
    \[
        X_s(t)=\sum_{i=1}^n a_i(s,t)E_i(t,v),
    \]
    with $I_{\lambda(v)+s}(X_s,X_s)\geq C_1^{-2}s/2$ and $|a_i(s,t)|+|\dot a_i(s,t)|\leq 4C_1^3$ for $i=1,...,n$. Here, $C_1$ is an in the remark above.
    
\end{theorem}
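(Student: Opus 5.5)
The plan is to adapt the corner-cutting construction of Itoh--Tanaka. Fix $v\in\overbar U$ and write $\lambda:=\lambda(v)$. Since $c_v(\lambda)$ is conjugate to $x$, there is a nontrivial Jacobi field $J$ along $c_v$ with $J(0)=0=J(\lambda)$. After rescaling we may assume $J=J(\cdot,v,w)$ with $|w|_h=1$.

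First I record the properties of $J$ that will be used.
\begin{enumerate}[(a)]
    \item Since $\langle J,\dot c_v\rangle_g$ is affine and vanishes at $0$ and at $\lambda$, we have $J\perp\dot c_v$ everywhere.
    \item By the Remark, $\dot J(\lambda)$ is spacelike with $|\dot J(\lambda)|_g\ge 1/C_1$, and $|J|_h+|\dot J|_h\le C_1$ on $[0,r]$.
    \item $I_{\lambda}(J,J)=0$, and more generally, since $J$ is Jacobi, $I_{t}(J,J)$ reduces to the boundary term $\pm\langle \dot J(t),J(t)\rangle_g$. The sign is fixed by the convention \eqref{eqh}. Reading off $F$ in the proof of Lemma \ref{lb}, the index form is $-\int(\langle X',X'\rangle_g-\langle R(X,\dot c)\dot c,X\rangle_g)$, so the boundary term is $-\langle\dot J(t),J(t)\rangle_g$.
\end{enumerate}

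Next I define $X_s$ by cutting the zero of $J$ at $\lambda$. For small $s>0$, set $X_s:=J$ on $[0,\lambda-s]$. On $[\lambda-s,\lambda+s]$, let $X_s$ be the linear interpolation, in the parallel frame $E_i(t,v)$, from $J(\lambda-s)$ to $0$; that is,
\[
X_s(t)=\tfrac{\lambda+s-t}{2s}\,P_t J(\lambda-s),
\]
where $P_t$ is parallel transport along $c_v$. This $X_s$ is piecewise smooth, vanishes at $0$ and at $\lambda+s$, and stays orthogonal to $\dot c_v$, because $J(\lambda-s)\perp\dot c_v$ and parallel transport preserves this. Hence $X_s\in V^\perp_{\lambda+s}(c_v)$. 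The case $s=0$ is just $X_0=J|_{[0,\lambda]}$. (Strictly speaking the construction lives on $[0,\lambda+s]$; any reparametrisation of $s$ needed to land exactly in $V^\perp_{\lambda(v)+s}$ is harmless.)

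The index computation is then a Taylor expansion at $\lambda$.
\begin{enumerate}[(1)]
    \item On $[0,\lambda-s]$, by (c), the contribution is $-\langle\dot J(\lambda-s),J(\lambda-s)\rangle_g$. Using $J(\lambda-s)=-s\dot J(\lambda)+O(s^2)$, this equals $s|\dot J(\lambda)|_g^2+O(s^2)$.
    \item On $[\lambda-s,\lambda+s]$, the derivative of $X_s$ is $-\frac{1}{2s}P_tJ(\lambda-s)=\frac12 P_t\dot J(\lambda)+O(s)$. Its contribution is therefore $-2s\cdot\frac14|\dot J(\lambda)|_g^2+O(s^2)$; the curvature term is $O(s^3)$ because $|X_s|=O(s)$ there.
\end{enumerate}
Altogether,
\[
I_{\lambda+s}(X_s,X_s)=\tfrac{s}{2}|\dot J(\lambda)|_g^2+O(s^2)\;\ge\;\tfrac{s}{2C_1^{2}}+O(s^2).
\]

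The main obstacle is that the claim asks for the bound $C_1^{-2}s/2$ with no error term, while the expansion only gives it up to $O(s^2)$. There are two things I would try. The first is to exploit the slack: the Remark gives $|\dot J(\lambda)|_g\ge 1/C_1$, but the infimum may be strict; if not, I would run the same construction with a slightly asymmetric interpolation interval $[\lambda-\theta s,\lambda+s]$. For that interval the leading coefficient is $\frac{1}{1+\theta}\,|\dot J(\lambda)|_g^2$, which for $\theta<1$ exceeds $\frac12$ and absorbs the $O(s^2)$ error. The second is that, since $s(v)$ is allowed to depend on $v$, it is enough to take $s(v)$ small for this fixed $v$, with the $O(s^2)$ constant controlled by $C^2$ bounds on $J$ along $c_v$.

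Finally, the coefficient bounds. Using the orthonormality of the frame $E_i(t,v)$, we have $a_i=\pm\langle X_s,E_i\rangle_g$, so \eqref{eqal} gives $|a_i|\le C_1\cdot C_1\cdot|X_s|_h\le C_1^3$, and similarly $|\dot a_i|$ is bounded on the $J$-part. On the interpolated part, $|\dot a_i|\le |a_i(\lambda-s)|/(2s)\le \frac12\sup|\dot a_i|$ by the mean value theorem, or $\frac{1}{1+\theta}$ of it in the asymmetric variant. These estimates stay well within the stated bound $|a_i|+|\dot a_i|\le 4C_1^3$.
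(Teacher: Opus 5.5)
There is a genuine gap: the inequality $I_{\lambda+s}(X_s,X_s)\ge C_1^{-2}s/2$, with no error term, is not established, and the remedy you propose for it is miscomputed.

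\textbf{Why the symmetric cut is not enough.} Your computation for the symmetric cut is correct. But it gives exactly $I_{\lambda+s}(X_s,X_s)=\tfrac{s}{2}|\dot J(\lambda)|_g^2+O(s^2)$, while the Remark only gives $|\dot J(\lambda)|_g\ge 1/C_1$. That infimum is attained, and $C_1$ is just some admissible constant, so equality $|\dot J(\lambda)|_g=1/C_1$ can occur. In that case a negative $O(s^2)$ term violates $I\ge C_1^{-2}s/2$ for every small $s$. Your second remedy, taking $s(v)$ small, cannot help, because the linear coefficient has no slack left.

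\textbf{Why the asymmetric remedy fails.} The leading coefficient for the cut on $[\lambda-\theta s,\lambda+s]$ is not $\frac{1}{1+\theta}|\dot J(\lambda)|_g^2$.
\begin{itemize}
\item The $J$-part contributes $-\langle\dot J(\lambda-\theta s),J(\lambda-\theta s)\rangle_g=\theta s|\dot J(\lambda)|_g^2+O(s^2)$.
\item On the linear piece, $\dot X_s=\frac{\theta}{1+\theta}P_t\dot J(\lambda)+O(s)$ over an interval of length $(1+\theta)s$, so it contributes $-\frac{\theta^2}{1+\theta}s|\dot J(\lambda)|_g^2+O(s^2)$.
\end{itemize}
The total is $\frac{\theta}{1+\theta}s|\dot J(\lambda)|_g^2+O(s^2)$. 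As a check, at $\theta=0$ the field is $J$ extended by zero, whose index is $0$. So $\theta<1$ makes the coefficient smaller than $\tfrac12$, not larger.

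\textbf{How to repair it.} Cut further back, with $\theta>1$. For example, $\theta=3$ gives $\tfrac34 s|\dot J(\lambda)|_g^2+O(s^2)\ge \tfrac{3s}{4C_1^2}+O(s^2)$. This exceeds $C_1^{-2}s/2$ once $s\le s(v)$ is small. Your coefficient bounds survive: on the linear piece the mean value theorem gives $|\dot a_i|\le\frac{\theta}{1+\theta}\sup|\dot a_i|\le C_1^3$, so $|a_i|+|\dot a_i|\le 2C_1^3$.

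\textbf{How the paper does it.} The paper uses the optimal version of this idea. It keeps $J$ only on $[0,a]$, with $c_v(a)$ in a convex neighbourhood of $c_v(\lambda)$. On $[a,\lambda+s]$ it replaces your linear piece by the Jacobi field $Y_s$ with $Y_s(a)=J(a)$ and $Y_s(\lambda+s)=0$. The index then reduces to the jump term at $a$. By constancy of the Wronskian (Lemma \ref{ld}), this equals $\langle\dot Y_s(\lambda+s),J(\lambda+s)\rangle_g=s|\dot J(\lambda)|_g^2+o(s)$. The leading coefficient is therefore $1$, which is the $\theta\to\infty$ limit of your construction. That is a factor $2$ above what the statement requires, so the $o(s)$ error is absorbed with no further work.
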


\begin{proof}
    Fix $v\in U$. Let $a=a(v)\in [0,\lambda(v))$ such that $c_{v}(a)\in V$, where $V$ is a convex neighbourhood of $c_v(\lambda(v))$. Let $X$ be a Jacobi field along $c_{v}$ vanishing at $0$ and $\lambda(v)$ with $|\dot X(0)|_h=1$. Define $s(v)>0$ such that
    \[
        \lambda(v)+s\leq r \quad \text{ and } \quad c_v(\lambda(v)+s)\in V \quad \forall s\in [0,s(v)].
    \]
    Since $V$ is convex, there exists a Jacobi field $Y_s$ along $c_v$ with $Y_s(a)=X(a)$ and $Y_s(\lambda(v)+s)=0$ (Lemma \ref{lc}). Consider the broken Jacobi field
    \begin{align*}
      X_s:[0,\lambda(v)+s]\to TM,\ X_s(t)=
      \begin{cases}
          X(t),\ &\text{if } t\leq a,
          \\
          Y_s(t),\ &\text{otherwise}.
      \end{cases}
    \end{align*}
    We claim that $X_s\in V_{\lambda(v)+s}^\perp(c_v)$. Indeed, since $X(0)=X(\lambda(v))=0$, we have $\langle X, \dot c_v\rangle_g \equiv 0$. Moreover, $Y_s(\lambda(v)+s)=0$ and, by definition of $Y_s$ and since $\langle X, \dot c_v\rangle_g \equiv 0$, also $\langle Y_s(a),\dot c_v(a)\rangle_g=0$. Hence, $\langle Y_s,c_v\rangle_g\equiv 0$, since $Y_s$ is a Jacobi field. This shows $X_s\in V_{\lambda(v)+s}^\perp(c_v)$ and concludes the first part of the theorem.
    
    Being $X$ and $Y_s$ Jacobi fields, using first the formula of the index form \eqref{eqh} and then the standard Lemma \ref{ld}, we get
    \begin{align*}
         I_{\lambda(v)+s}(X_s,X_s) &= \big\langle \dot X_s(a^+),X_s(a)\big\rangle_g -\big\langle \dot X_s(a^-),X_s(a)\big\rangle_g
        \\[5pt]
        &=\big\langle \dot Y_s(a),X(a)\big\rangle_g -\big\langle \dot X(a),Y_s(a)\big\rangle_g
        \\[5pt]
        & = \big\langle \dot Y_s(\lambda(v)+s),X(\lambda(v)+s)\big\rangle_g -\big\langle \dot X(\lambda(v)+s),Y_s(\lambda(v)+s)\big\rangle_g.
    \end{align*}
    The second term vanishes by definition of $Y_s$. Since ${Y_s}$ clearly converges in the $C^2_{loc}$-topology to $X$ as $s\to 0$ (Lemma \ref{lc}), and since $X(\lambda(v))=0$, we compute 
    \[
        \lim_{s\to 0} \inf_{t\in [\lambda(v),\lambda(v)+s]}\frac{d}{dt}
        \big\langle \dot Y_s(t),X(t)\big\rangle_g
        = |\dot X(\lambda(v))|_g^2 \geq C_1^{-2}.
    \]
    Hence, we obtain that
    \[
         I_{\lambda(v)+s}(X_s,X_s) \geq  C_1^{-2}s/2
    \]
    for small $s$, concluding the proof of the second part of the theorem.

    Finally, note that, since $|X(t)|_h+|\dot X(t)|_h\leq C_1$ for $t\in [0,r]$, and since $Y_s$ converges in the $C_{loc}^1$-topology to $X$, we have $|X_s(t)|_h+|\dot X_s(t)|_h\leq 2C_1$ for small $s$. Thus, by \eqref{eqal},
    \[
    |a_i(s,t)|+|\dot a_i(s,t)|=|\langle Y_s(t),E_i(t,v)\rangle_g|+|\langle \dot Y_s(t),E_i(t,v)\rangle_g| \leq 4C_1^3. \qedhere
    \]
    \end{proof}

\begin{lemma}
    There exists a constant $C_2>0$ such that, for any $t\in [0,r], v,w\in \overbar U$ future directed causal and $a\in \R^n$ with $|a_i|\leq 4C_1^3$, we have
    \[
         \sum_{i=2}^n a_i\Big(\frac{\langle E_i(t,v),\dot c_v(t)\rangle_g}{\langle \dot c_v(t),E_1(t,v)\rangle_g} 
        -
        \frac{\langle E_i(t,w),\dot c_w(t)\rangle_g}{\langle \dot c_w(t),E_1(t,w)\rangle_g}  \Big)
        \leq C_2 |v-w|_h.
    \]
\end{lemma}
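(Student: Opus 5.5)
The plan is to show that each quotient
\[
    f_i(t,v):=\frac{\langle E_i(t,v),\dot c_v(t)\rangle_g}{\langle \dot c_v(t),E_1(t,v)\rangle_g},\qquad i=2,\dots,n,
\]
is Lipschitz in $v$, uniformly in $t\in[0,r]$, on $\overbar U\cap\C_x$. Once this is done, the claimed bound follows from the triangle inequality with $C_2:=4C_1^3 (n-1) L$, where $L$ is a common Lipschitz constant. The statement is linear in $a$ with bounded coefficients, so nothing beyond uniform Lipschitz bounds on the $f_i$ is needed.

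First I would simplify the quotient using parallel transport. Since $E_i(\cdot,v)$ is parallel along the geodesic $c_v$ and $\dot c_v$ is also parallel, the inner products $\langle E_i(t,v),\dot c_v(t)\rangle_g$ are constant in $t$. They equal $\langle E_i,v\rangle_g=\ep_i v^i$, where $v=\sum_j v^jE_j$ in the fixed orthonormal basis. Hence $f_i(t,v)=-v^i/v^1$ is independent of $t$, and up to sign equal to the ratio of coordinates. Even without this simplification, the numerator and denominator are smooth functions of $(t,v)$ on a neighbourhood of the compact set $[0,r]\times\overbar U$, by smooth dependence of geodesics and parallel transport on initial data. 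So the only issue is keeping the denominator away from zero.

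The main step is a uniform lower bound for $|\langle \dot c_v,E_1\rangle_g|=|v^1|$ on $\overbar U\cap\C_x$. For a nonzero future directed causal $v$, and with $E_1$ future timelike, we have $\langle v,E_1\rangle_g<0$. The obstacle is the vector $v=0$, which belongs to $\C_x$ by the paper's convention. Since $U\Subset T_xM$ is a neighbourhood of $v_*$ with $\lambda<r<\infty$ on $\overbar U$, and $\lambda(0)=\infty$, the point $0$ is not in $\overbar U$. On the compact set $\overbar U\cap\C_x$, the continuous function $-\langle v,E_1\rangle_g$ is therefore positive, and so it is bounded below by some $\delta>0$. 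Here $\C_x$ being closed is used to get compactness.

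With this bound, $v\mapsto v^i/v^1$ is smooth on a neighbourhood of the compact set $\overbar U\cap\C_x$ where $|v^1|>\delta/2$. On a convex (or finite-cover) neighbourhood it is Lipschitz with respect to the coordinate norm, and hence with respect to $|\cdot|_h$ on $T_xM$ by equivalence of norms. To make the Lipschitz estimate global on $\overbar U\cap\C_x$ rather than only local, I would use boundedness of the $f_i$ together with compactness. For $|v-w|_h\ge\eta$, the difference is at most $2\sup|f_i|\le 2\sup|f_i|\,|v-w|_h/\eta$. For $|v-w|_h<\eta$, with $\eta$ a Lebesgue number, both points lie in a single convex chart ball where the mean value theorem applies.
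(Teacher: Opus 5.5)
Your proposal is correct and follows the same route as the paper, whose proof consists only of the remark that the sum depends smoothly on $v$ and $w$. You supply the details it leaves implicit: by parallelity the quotients equal $-v^i/v^1$; the denominator is bounded away from zero on the compact set $\overline{U}\cap\mathcal{C}_x$ because $0\notin\overline{U}$ (as $\lambda(0)=\infty$); and the local Lipschitz bound extends to a uniform one by compactness.
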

\begin{proof}
    This follows from the smooth dependence of the sum on $v$ and $w$.
\end{proof}

We are now in position to prove Theorem \ref{thmc}. The proof follows the approach of \cite{Itoh/Tanaka}: For $v\in U$ timelike and $w\in U$ sufficiently close to $v$, we use $X_s$ from above to construct a vector field $Y_s$ along $c_w$ such that $I_{\lambda(v)+s}(Y_s,Y_s)>0$. This implies $\lambda(w)\leq \lambda(v)+s$. We must show that $s$ can be bounded linearly in terms of $|v-w|_h$.
    
\begin{proof}[Proof of Theorem \ref{thmc}] 
    
    Set $R:=\max\{4C_1^3,C_2\}$ and $C:=4C(R)C_1^2$, where $C(R)$ is as in Lemma \ref{lb}. Let us show \eqref{eqam}.
        
    Fix $v\in U$ future directed timelike and define $X_s$ as in Theorem \ref{thmb} for $s\leq s(v)$. Since $X_s\in V_{\lambda(v)+s}^\perp(c_v)$, we have $\langle X_s,\dot c_v\rangle_g\equiv 0$, and thus
    \[
        a_1(s,t) = \frac{-1}{\langle \dot c_v(t),E_1(t,v)\rangle_g} \sum_{i=2}^n a_i(s,t) \langle E_i(t,v),\dot c_v(t)\rangle_g.
    \]
    For $w\in U\cap \op{int}(\C_{x})$, we define
    \[
        Y_s(t) := \sum_{i=1}^n b_i(s,t) E_i(t,w)
    \]
    with $b_i=a_i$ for $i\geq 2$ and $b_1$ such that $Y_s\in V_{\lambda(v)+s}^\perp(c_w)$, i.e.\ 
    \begin{align}
        b_1(s,t):=\frac{-1}{\langle \dot c_w(t),E_1(t,w)\rangle_g}\sum_{i=2}^n a_i(s,t) \langle E_i(t,w),\dot c_w(t)\rangle_g. \label{eqi}
    \end{align}
    Denote by $\dot a(s,t)$ and $\dot b(s,t)$ the derivative w.r.t.\ $t$. Whenever $X_s$ is differentiable, it holds
    \begin{align}
        \dot b_1(s,t):=-\frac{1}{\langle \dot c_w(t),E_1(t,w)\rangle_g}\sum_{i=2}^n \dot a_i(s,t) \langle E_i(t,w),\dot c_w(t)\rangle_g, \label{eqj}
    \end{align}
    by the parallelity of $\dot c_w$ and $E_i$. 
    Using the bounds from Theorem \ref{thmb}, we have
    \[
        |a_1(s,t)|,|\dot a_1(s,t)|\leq 4C_1^3 \leq R,
    \]
    so that the above lemma implies
    \[
        |b_1(s,t)-a_1(s,t)|,|\dot b_1(s,t)-\dot a_1(s,t)|\leq C_2|v-w|_h \leq R|v-w|_h.
    \]
     If $w\neq v$ is sufficiently close to $v$ so that $s:=C|v-w|_h\leq s(v)$, Lemma \ref{lb} and Theorem \ref{thmb} imply that the Lorentzian index form $I_{\lambda(v)+s}$ along $c_w$ is not negative semidefinite:
    \[
        I_{\lambda(v)+s}(Y_s,Y_s) \geq I_{\lambda(v)+s}(X_s,X_s)-C(R) |v-w|_h \geq \frac{C_1^{-2}}{2}s-C(R)|v-w|_h>0.
    \]
    Thus, as $w$ is timelike, Corollary \ref{prod} implies $\lambda(w)\leq \lambda(v)+C|v-w|_h$.
\end{proof}

\section{Lipschitz continuity of the cut time}\label{sec: Lipsch. cut}

In this section, we prove Theorem \ref{thmd}. In order to do so, fix $x\in M$ and $v_*\in T_{x}M$ future directed causal such that $\rho(v_*)<\infty$ and $y_*:=c_{v_*}(\rho(v_*))$ is defined. 

The goal is to prove that there exists a constant $C>0$ and an open neighbourhood $U$ of $v_*$ such that 
\begin{align}
    \forall v_0,v_1\in U\cap \op{int}(\C_{x}):\ |\rho(v_1)-\rho(v_0)| 
    \leq 
    \frac{C|v_1-v_0|_h}{\min\{|v_0|_g^2,|v_1|_g^2\}}.\label{eqag2}
\end{align} 

As in \cite{Itoh/Tanaka}, we first show how the theorem follows from the following lemma. Lemma \ref{la} is what we will prove in the rest of the section.

\begin{lemma}[Uniform bound for the  Lipschitz constant]\label{la}
  There exists a constant $C > 0$ and an open neighbourhood $U$ of $v_*$ such that the following holds: 
Suppose that $v_0, v_1 \in U \cap \operatorname{int}(\mathcal{C}_x)$ and define
\[
v(s) := (1-s)v_0 + s v_1 .
\]
If $v(s) \in U$ and $\rho(v(s)) < \lambda(v(s))$ for all $s$, and if $\rho \circ v$ is differentiable at $s_0$, then
\begin{equation}
\left| (\rho \circ v)'(s_0) \right|
\leq
\frac{C \, |v_1 - v_0|_h}{\min\{ |v_0|_g^2, |v_1|_g^2 \}}.
\label{eqaf}
\end{equation}
    In particular, since $\rho\circ v$ is locally Lipschitz\footnote{Although we use Proposition \ref{li}, we do not need its full strength, but only that $\rho$ is locally Lipschitz continuous in a neighbourhood of $v_*$. This is a simple consequence of the the first variation formula.} by Proposition \ref{li}, \eqref{eqag2} holds for this choice of $v_0$ and $v_1$.
\end{lemma}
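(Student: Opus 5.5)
Following Itoh--Tanaka, I would work along the segment $v(s)$ and split into two cases according to how many maximizing geodesics reach the cut point. Set $s_0$ fixed, $v:=v(s_0)$, $t_0:=\rho(v)$ and $y:=c_v(t_0)$. Since $\rho(v)<\lambda(v)$, the classical dichotomy of Appendix \ref{App: A} says that $y$ is joined to $x$ by at least two distinct maximizing geodesics. So there is $u\in\C_x$ with $u\neq v$ (not a multiple of $v$), $\rho(u)=t_0$ after normalization, and $c_u(t_0)=y$ with $\ell_g(c_u|_{[0,t_0]})=d(x,y)$. Write $\tau(s):=\rho(v(s))$ and $F(s):=d(x,c_{v(s)}(\tau(s)))-\tau(s)|v(s)|_g\equiv 0$.

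The key identity comes from using the sub-gradient \eqref{eqsub} of $d(x,\cdot)$ at $y$ along \emph{both} geodesics. By the semiconvexity estimate \eqref{eqao}, for $p_u:=-\langle \dot c_u(t_0)/|\dot c_u(t_0)|_g,\cdot\rangle_g$ we have
\[
d(x,c_{v(s)}(\tau(s)))\geq d(x,y)+p_u\bigl(\tfrac{d}{ds}c_{v(s)}(\tau(s))|_{s_0}\bigr)(s-s_0)+o(s-s_0),
\]
and the same holds with $p_v$. On the other hand, $d(x,c_{v(s)}(\tau(s)))=\tau(s)|v(s)|_g$ along the curve. I would differentiate using the chain rule: $\frac{d}{ds}c_{v(s)}(\tau(s))=\tau'(s_0)\dot c_v(t_0)+J(t_0)$, where $J$ is the Jacobi field along $c_v$ with $J(0)=0$ and $\dot J(0)=v_1-v_0$. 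Comparing the two one-sided inequalities (for $s>s_0$ and $s<s_0$, using differentiability of $\tau$ at $s_0$) turns the inequality with $p_v$ into an identity (the first variation formula) and gives an inequality with $p_u$. Subtracting them should produce
\[
\tau'(s_0)\,\Bigl\langle \dot c_v(t_0),\tfrac{\dot c_u(t_0)}{|\dot c_u(t_0)|_g}-\tfrac{\dot c_v(t_0)}{|\dot c_v(t_0)|_g}\Bigr\rangle_g=\Bigl\langle J(t_0),\tfrac{\dot c_v(t_0)}{|\dot c_v(t_0)|_g}-\tfrac{\dot c_u(t_0)}{|\dot c_u(t_0)|_g}\Bigr\rangle_g+(\text{terms from }|v(s)|_g').
\]
Here I would actually use both signs of $s-s_0$ to get an equality rather than two inequalities. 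The right-hand side is bounded by $C|v_1-v_0|_h$ times $|J|$-bounds, via compactness as in Remark (ii). The derivative of $|v(s)|_g$ contributes a term of size $|v_1-v_0|_h/|v|_g$.

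The main obstacle is bounding the coefficient of $\tau'(s_0)$ from below. By the reverse Cauchy--Schwarz inequality, $-\langle \dot c_v,\dot c_u\rangle_g/(|\dot c_v|_g|\dot c_u|_g)-1\geq 0$, with equality only if the two directions coincide. I need a quantitative lower bound of order $|v|_g^2$ after rescaling; this is where the factor $\min\{|v_0|_g^2,|v_1|_g^2\}$ in \eqref{eqaf} should come from. I would argue by contradiction and compactness. If the normalized velocities $\dot c_u(t_0)$ and $\dot c_v(t_0)$ at the cut point became close, then by Lemma \ref{lr} (compactness of maximizing geodesics) and continuity of $\rho$ (Lemma \ref{lq}), we would obtain nearby pairs of distinct maximizing geodesics with converging endpoints and directions. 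This would force a conjugate point, that is, $\rho=\lambda$ at a limit. To rule this out, I would shrink $U$ and use Theorem \ref{thmc}/the local inverse function theorem for $\exp_x$ near any $u$ with $\rho(u)<\lambda(u)$, which makes $\exp_x$ injective nearby and so separates $u$ from $v$ uniformly.

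Near null directions, the angle between $u$ and $v$ can degenerate in the hyperbolic sense, because the normalization by $|\cdot|_g$ blows up. So the lower bound on the coefficient is expected only to be of order $|v|_g^2$ (measured in $h$). I would make this explicit by writing the coefficient as $\frac{|\dot c_v|_g}{|\dot c_u|_g}\bigl(-\langle \dot c_v,\dot c_u\rangle_g\bigr)-|\dot c_v|_g^2$, bounding it from below via the $h$-separation of the directions, and tracking powers of $|v|_g$ carefully. Finally, once \eqref{eqaf} holds at differentiability points, the conclusion \eqref{eqag2} follows by integrating the Lipschitz function $\rho\circ v$ on $[0,1]$.
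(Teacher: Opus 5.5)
Your identity is correct, and slightly slicker than the paper's limiting-sub-gradient lemma. Because $c$ and $d_x\circ c=\rho\,|v(\cdot)|_g$ are differentiable at $s_0$, the two one-sided semiconvexity inequalities force $p(\dot c(s_0))=(d_x\circ c)'(s_0)$ for \emph{every} sub-gradient $p$. So any second maximizing geodesic $c_u$ with $\rho(u)=\rho(v)$ works, and the $|v(s)|_g'$ terms cancel exactly rather than contributing. Writing $e_1(u):=-\dot c_u(t_0)/|\dot c_u(t_0)|_g$, similarly $e_1(v)$, and $\cosh\phi:=-\langle e_1(v),e_1(u)\rangle_g$, your identity is exactly \eqref{eqae}: $2|v|_g\sinh^2(\phi/2)\,\rho'(s_0)=\langle J(t_0),e_1(v)-e_1(u)\rangle_g$.

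The gap is what comes next. You bound the numerator crudely by $C|v_1-v_0|_h|e_1(v)-e_1(u)|_h$, which is only linear in $\phi$. You then try to keep $\sinh^2(\phi/2)$ away from zero by compactness. But Lemma \ref{la} is needed precisely near $v_*$ with $\rho(v_*)=\lambda(v_*)$, the second case in the proof of Theorem \ref{thmd}. If $\rho(v_*)<\lambda(v_*)$, the uniform separation you want is just Lemma \ref{lj}, and that is the easy case. When $\rho(v_*)=\lambda(v_*)$, your contradiction argument yields a limit direction with $\rho=\lambda$, which contradicts nothing. Shrinking $U$ does not help: in general every neighbourhood of $v_*$ contains $v$ with $\rho(v)<\lambda(v)$ whose second maximizing geodesic $u$ is arbitrarily close to $v$. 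Think of cut points approaching an endpoint of the cut locus of a surface, where the two minimizing geodesics merge; there $\phi\to0$. Local injectivity of $\exp_x$ near directions with $\rho<\lambda$ degenerates as they approach a conjugate direction, so it gives no uniform separation. With your estimates the quotient blows up like $1/\phi$.

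The missing idea is a second-order cancellation that makes the numerator quadratic in $|u-v|_h$, matching the quadratic vanishing of the coefficient. Let $X:=J(\cdot,v,u-v)$ be the Jacobi field linearising $c_u$ around $c_v$. Because $\rho(u)=\rho(v)$ and $c_u(t_0)=c_v(t_0)$, Taylor expansion gives $|X(t_0)|_h\le C|u-v|_h^2$ and $\dot c_u(t_0)-\dot c_v(t_0)=\dot X(t_0)+O(|u-v|_h^2)$ (Lemma \ref{lh}(iii)). Since $J$ and $X$ both vanish at $0$, Lemma \ref{ld} lets you swap derivatives: $\langle J(t_0),\dot X(t_0)\rangle_g=\langle \dot J(t_0),X(t_0)\rangle_g$. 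Hence the numerator is $O(|u-v|_h^2\,|v_1-v_0|_h/|v|_g)$. Next, Lemma \ref{lh}(ii) with $|u|_g=|v|_g$ gives $|u-v|_h\le C|v|_g|e_1(v)-e_1(u)|_h$. Finally, the Euclidean/hyperbolic comparison Lemma \ref{ll6} together with $\sinh(\phi/2)\ge\phi/2$ bounds $|e_1(v)-e_1(u)|_h^2/\sinh^2(\phi/2)$ by $C\max\{|e_1(v)|_h^2,|e_1(u)|_h^2\}\le C|v|_g^{-2}$, using Lemma \ref{lh}(i). Combined with concavity of $|\cdot|_g$ along the segment, this is where $\min\{|v_0|_g^2,|v_1|_g^2\}$ comes from. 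It does not come from a lower bound of order $|v|_g^2$ on the coefficient. In fact, near null directions with $\rho<\lambda$ the coefficient is large, not small (cf.\ the proof of Corollary \ref{cor3}).
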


The following proof is as in \cite{Itoh/Tanaka}, Proof of Theorem B.

\begin{proof}[Proof of Theorem \ref{thmd}]
    If $\rho(v_*)<\lambda(v_*)$, continuity of $\rho$ (Lemma \ref{lq}) and Lemma \ref{cor2} give $\rho(v)<\lambda(v)$ for $v$ close to $v_*$. Then Lemma \ref{la} immediately yields the claim. 
    
    Otherwise, suppose that $\rho(v_*)=\lambda(v_*)$, and let $U$ and $C$ be as in the above lemma. We may assume that $U$ is convex, that $\rho$ is continuous on $U$ (Lemma \ref{lq}) and that $\lambda$ is Lipschitz on $U$ with
    \[
        \op{Lip}(\lambda_{|U})\leq \tilde C:=\frac{C}{\min\{|v_0|^2_g,|v_1|^2_g\}}.
    \]
    Fix $v_0,v_1\in U\cap \op{int}(\C_{x})$. If $\rho(v_0)=\lambda(v_0)$, then 
    \begin{align}
        \rho(v_1)  \leq \lambda(v_1) \leq \lambda(v_0)+\tilde C \, |v_1 - v_0|_h= \rho(v_0)+\tilde C \, |v_1 - v_0|_h. \label{eqba}
    \end{align}
    Otherwise, let $v(s):=(1-s)v_0+sv_1$, and define 
    \[
        I:=\{t\in [0,1]\mid \rho(v(s)) < \lambda(v(s)) \quad \forall s\in [0,t]\}.
    \]
    Obviously, $0\in I$. Let $T:=\sup(I)$. Then the preceding lemma gives
    \[
        \rho(v(T))-\rho(v_0)= \lim_{t\uparrow T} \rho(v(t))-\rho(v_0)
        \leq \frac{C\, |v(T)-v_0|_h}{\min\{|v_0|_g^2,|v(T)|_g^2\}}.
    \]
    Moreover, continuity of $\rho$ and $\lambda$ give $\rho(v(T))=\lambda(v(T))$, and thus we may argue as in the first case to obtain
    \[
        \rho(v_1) \leq \lambda(v_1) \leq \rho(v(T))+\tilde C\, |v_1-v(T)|_h.
    \]
    Using the fact that $s\mapsto |v(s)|_g$ is concave, one easily concludes that
    \[
        \rho(v_1)-\rho(v_0) \leq \tilde C\, |v_1-v_0|_h.
    \]
    Thus, this inequality holds in any case. Exchanging the roles of $v_0$ and $v_1$, we obtain \eqref{eqag2}.
\end{proof}

\subsection{Uniform estimates}

In order to prove Lemma \ref{la}, we need some uniform estimates. Notice that Part (i) of the following lemma is trivial in the Riemannian case. Part (ii) and (iii) correspond to Lemma 2.4 and 2.5 in \cite{Itoh/Tanaka}. 

\begin{lemma}[Uniform estmates]\label{lh}
    Let $U\subseteq T_{x}M$ be a precompact neighbourhood of $v_*$ such that $\rho(v)<\infty$ and $c_v(\rho(v))$ is defined for all $v\in \overbar U\cap \C_x$. Then there exists a constant $C>0$ such that the following properties hold for all $v\in \overbar U\cap \C_x$ and $w\in \C_{x}$ satisfying $\rho(v)=\rho(w)$ and $c_v(\rho(v))=c_w(\rho(w))$:
    \begin{enumerate}[(i)]
    \item $1/C\leq |\dot c_w(\rho(w))|_h\leq C$.
        \item $|v-w|_h \leq C |\dot c_v(\rho(v))-\dot c_w(\rho(w))|_h$.
        \item Let $X=J(\cdot,v,w-v)$ (recall Definition \ref{defa}). Then
        \begin{align} 
        |X(\rho(v))|_h
        \leq C\, |v-w|_h^2, \label{eqan}
        \end{align}
        and
        \begin{align}
            |\dot c_w(\rho(w))-\dot c_v(\rho(v)) - 
            \dot X(\rho(v))|\leq C\, |v-w|_h^2. \label{eqan2}
        \end{align}
    \end{enumerate}
\end{lemma}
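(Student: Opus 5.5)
The plan is to reduce all three statements to compactness plus smooth dependence of the geodesic flow on initial data. The first and decisive step is to show that the set
\[
  W:=\{w\in\C_x \mid \exists\, v\in \overbar U\cap\C_x:\ \rho(w)=\rho(v),\ c_w(\rho(w))=c_v(\rho(v))\}
\]
is precompact in $T_xM$ and stays away from $0$.

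\textbf{Compactness of $W$.} First I bound $\rho$ from both sides on $\overbar U\cap\C_x$.
\begin{itemize}
\item $\rho$ is finite and continuous there (Lemma \ref{lq}, using that $c_v(\rho(v))$ is defined), so it is bounded above. By shrinking $U$ if necessary, we may assume $0\notin\overbar U$.
\item $\rho$ is positive, hence bounded below by some $\rho_0>0$.
\end{itemize}
Consequently the cut points $y=c_v(\rho(v))$ range over a compact set $K$ not containing $x$.

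For $w\in W$ set $u:=\rho(w)w$. Then $s\mapsto\exp_x(su)$, $s\in[0,1]$, is a maximizing causal geodesic from $x$ to a point of $K$. I would invoke the compactness of maximizing geodesics (Lemma \ref{lr}) to conclude that such $u$ lie in a compact set. The delicate case is null directions, where the affine parameter is not fixed by the length. Here I would argue by contradiction. If $|u_k|_h\to\infty$, the normalized vectors $u_k/|u_k|_h$ subconverge to a causal unit vector, and the corresponding geodesics reach $K$ at parameter $|u_k|_h^{-1}\to0$. This forces $x\in K$, which is false. Since $\rho(w)=\rho(v)\in[\rho_0,\sup\rho]$, dividing by $\rho(w)$ shows that $W$ is bounded. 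It is also bounded away from $0$, because $\exp_x(\rho(w)w)\in K\not\ni x$.

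Closedness is handled by taking limits of maximizing geodesics. If necessary I would work with $\overbar W$ inside the domain of the geodesic flow $\Phi$, up to time $\sup\rho$. I expect this step, with the null rescaling issue, to be the main obstacle.

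\textbf{Part (i).} The quantity $|\dot c_w(\rho(w))|_h$ is the $h$-norm of the velocity component of $\Phi_{\rho}(x,w)$. It is continuous in $(\rho,w)$ and nonvanishing for $w\neq0$. Hence it is bounded above and below on the compact set $[\rho_0,\sup\rho]\times\overbar W$.

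\textbf{Part (ii).} Both $v$ and $w$ are recovered by flowing backwards from the same base point $y$:
\[
  v=\Phi_{-\rho}\bigl(y,\dot c_v(\rho)\bigr),\qquad w=\Phi_{-\rho}\bigl(y,\dot c_w(\rho)\bigr).
\]
The map $\Phi_{-\rho}$ is smooth, hence locally Lipschitz. On a compact set of data $(\rho,y,\xi)$, with $|\xi|_h\in[1/C,C]$ by (i), this gives $|v-w|_h\le C|\dot c_v-\dot c_w|_h$ whenever the two velocities are $\delta$-close. For pairs that are not $\delta$-close, the bound is trivial: $|v-w|_h\le\op{diam}(\overbar U\cup W)\le(\op{diam}/\delta)\,|\dot c_v-\dot c_w|_h$.

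\textbf{Part (iii).} I would use a second-order Taylor expansion of $F(u):=\Phi_{\rho}(x,u)=(c_u(\rho),\dot c_u(\rho))$ at $u=v$ in the direction $w-v$, working in local charts around the compact set of endpoints. The differential $dF_v(w-v)$ is $(X(\rho),\dot X(\rho))$ with $X=J(\cdot,v,w-v)$, because varying the initial velocity produces the Jacobi field with $J(0)=0$ and $\dot J(0)=w-v$. The Taylor remainder is $O(|w-v|_h^2)$, uniformly by compactness.

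Since the position components agree, $c_w(\rho)=c_v(\rho)$, this immediately yields \eqref{eqan}. The velocity component gives \eqref{eqan2} in coordinates, once two corrections are accounted for.
\begin{itemize}
\item The coordinate derivative of $X$ differs from the covariant derivative $\dot X$ by Christoffel terms of size $O(|X(\rho)|_h)=O(|v-w|^2_h)$.
\item Comparing coordinate norms with $h$-norms costs only a constant.
\end{itemize}
As in (ii), the expansion is applied when $|v-w|_h<\delta$, so that the segment from $v$ to $w$ stays in the domain of $\Phi_\rho$. For $|v-w|_h\ge\delta$, both sides of \eqref{eqan} and \eqref{eqan2} are bounded by compactness, so the inequalities hold after enlarging $C$ by a factor of order $\delta^{-2}$.
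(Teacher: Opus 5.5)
Your proposal follows the paper's proof essentially step by step. Compactness of the set of admissible pairs $(v,w)$ comes from Lemma \ref{lr}, and continuity gives (i). For (ii) you use smoothness of the backward exponential map from the common endpoint at the common time $\rho(v)=\rho(w)$; you argue directly with a Lipschitz bound and a case split, where the paper argues by contradiction. For (iii) you use a second-order Taylor expansion of $u\mapsto\exp_x(\rho(v)u)$, and your Christoffel correction plays the role of the paper's Hessian term $\langle\nabla^2\varphi(y)\,\dot c_v(\rho(v)),X(\rho(v))\rangle_g$.

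The step you should delete rather than repair is your extra contradiction argument for null directions, because it is wrong as written. Set $\hat u_k=u_k/|u_k|_h$, so that $\exp_x(u_k)=\exp_x(|u_k|_h\,\hat u_k)$. The normalized geodesics then reach the set of endpoints at parameter $|u_k|_h\to\infty$, not $|u_k|_h^{-1}\to0$, so nothing forces $x$ into that set. That scaling argument is the one that excludes $u_k\to0$, not $|u_k|_h\to\infty$.

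Ruling out $|u_k|_h\to\infty$ genuinely needs global hyperbolicity. The limit geodesic $c_{\hat u}$ would be a future-inextendible causal curve imprisoned in the compact set $J^+(x)\cap J^-(\text{endpoints})$, which is impossible. This is exactly what Lemma \ref{lm} encodes. Lemma \ref{lr}, applied to the compact set $\overbar U\cap\C_x$, then yields compactness of the set of pairs with null directions included, and that is all the paper uses.

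Two smaller corrections:
\begin{enumerate}[(a)]
\item In (i) and (ii), work on the compact set of pairs, or on the graph $\{(\rho(w),w)\}$, rather than on $[\rho_0,\sup\rho]\times\overbar W$. The geodesic $c_w$ need not be defined up to time $\sup\rho$.
\item You are not allowed to shrink $U$, since the lemma is stated for the given $U$. You also do not need to: $0\in\C_x$ and $\rho(0)=\infty$, so the hypothesis already forces $0\notin\overbar U$.
\end{enumerate}
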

\begin{proof}
    Recall first that the compactness of maximizing geodesics in the sense of Lemma \ref{lr} guarantees that the set 
    of all $v,w$ as in the hypothesis forms a compact subset of $T_xM^2$.
    (i) follows immediately from this observation and the continuity of $\rho$, noticing that $w\neq 0$ for any $w$ as in the hypothesis since $v\neq 0$.
    
    To prove (ii), suppose by contrary that there exist sequences $(v_k)$ and $(w_k)$ as in the hypothesis of the lemma such that
    \begin{align}
        |v_k-w_k|_h \geq k\, |\dot c_ {v_k}(\rho(v_k))-\dot c_{w_k}(\rho(w_k))|_h. \label{eqw}
    \end{align}
    By our observation from the beginning, we may pass to a subsequence such that $v_k\to v\in \overbar U$ and $w_k\to w$ with $\rho(w)=\rho(v)$ and $c_w(\rho(w))=c_v(\rho(v))$. Suppose $v\neq w$; uniqueness of geodesics yields $\dot c_v(\rho(v))\neq \dot c_w(\rho(w))$, which leads to a contradiction by taking limits in \eqref{eqw}: 
    \[
        |v-w|_h = \lim_{k\to \infty} |v_k-w_k| \geq \limsup_{k\to \infty} k\,  |\dot c_{v_k}(\rho(v_k))-\dot c_{w_k}(\rho(w_k))|_h=\infty. 
    \]
    Thus, $v=w$. Since $(c_v(\rho(v)),-\dot c_v(\rho(v)),\rho(v))\in TM\times \R$ belongs to the domain of the smooth exponential map, and since
    \[
        \frac{\partial }{\partial t}\Big|_{t=\rho(v_k)} \exp({y_k},-t\dot c_{v_k}(\rho(v_k)))= -v_k,
    \]
    as well as
    \[
        \frac{\partial }{\partial t}\Big|_{t=\rho(w_k)} \exp({y_k},-t\dot c_{w_k}(\rho(w_k)))= -w_k,
    \]
    where $y_k=c_{v_k}(\rho(v_k))=c_{w_k}(\rho(w_k))$, we derive a contradiction to \eqref{eqw} (note that $\rho(v_k)=\rho(w_k)$ is crucial here). This proves (ii).
    
    For the proof of (iii), we shall keep the argument intuitive, 
    while being not completely rigorous. First observe that our observation above concerning the compactness guarantees the expressions on the left-hand side of \eqref{eqan} and \eqref{eqan2} to remain bounded for all possible $v$ and $w$ as in the hypothesis (recall that $J(\cdot,\cdot,\cdot)$ depends smoothly on its parameters). Therefore we may assume that $|v-w|_h$ is arbitrarily small. 
    
    Define
    \[
        f(t,v):=\exp_x(tv),
    \]
    which is defined, by our hypothesis, for $v\in \overbar U$ and $t=\rho(v)$ – and hence also for all $(t,w)$ close to $(\rho(v),v)$.
    
    For $v,w$ sufficiently close as in the hypothesis, Taylor's theorem yields for any smooth function $\varphi$
    \begin{align*}
        \varphi \circ f(t,w) -\varphi \circ f(t,v) = D_v(\varphi\circ f)(v)[w-v]+r(t,v,w)
    \end{align*}
    where $|r(t,v,w)|+|\partial_t r(t,v,w)|\leq C\, |v-w|_h^2$ for some uniform constant $C$ depending only on $\varphi$.
    By definition of $X$, we may rewrite this as
    \begin{align}
        \varphi \circ f(t,w) -\varphi \circ f(t,v) 
        = \bigl\langle \nabla \varphi(f(t,v), X(t)\bigl\rangle_g 
        +r(t,v,w).  \label{eqfa}
    \end{align}
    Differentiate both sides w.r.t.\ $t$ and put $t=\rho(v)=\rho(w)$. Setting $y:=f(\rho(v),v)$, we get
    \begin{align}
        \bigl\langle \nabla \varphi(y), \dot c_w(\rho(w))-\dot c_v(\rho(v)) \bigl\rangle_g \nonumber
        &=
        \bigl\langle
        \nabla^2\varphi(y)\cdot\dot c_v(\rho(v)), 
            X(\rho(v))\bigl\rangle_g\, 
        \\[5pt]
        &+\bigl\langle 
            \nabla \varphi(y), \dot X(\rho(v))
        \bigr\rangle_g \nonumber
        \\[5pt]
        &+\partial_t r(t,v,w). \label{eqfb}
    \end{align}
    Here, $\nabla^2\varphi$ denotes the Hessian of $\varphi$. Since $\varphi$ is arbitrary, Equation \eqref{eqfa} evaluated at $t=\rho(v)$ proves the first inequality. 
    Inserting this estimate into \eqref{eqfb}, we also obtain the second.
\end{proof}

\begin{definition}[The unit hyperboloid]\rm \label{defc}
    Consider the $n$-dimensional Minkowski space $(\R^{1,n-1},\eta)$, where the metric is given by
    \[
        \eta = (-dx_1)^2 +\sum_{i=2}^{n} (dx_i)^2.
    \]
    The \emph{unit hyperboloid} in $\R^{1,n-1}$ is the complete Riemannian manifold defined as
    \[
        \Hy:=\{x\in \R^n\mid x_1>0,\ \eta(x,x)=-1\},
    \]
    where the metric is the one induced by $\eta$. For $x,y\in \Hy$, we denote its distance by $d_{\Hy}(x,y)$.
    Note that
    \begin{align*}
    \Hy 
    = \Biggl\{ 
    \begin{pmatrix} \cosh(\phi) \\ \sinh(\phi) y \end{pmatrix} 
    \;\Big|\; \phi \in [0,\infty),\ y \in S^{n-2} \subseteq  \mathbb{R}^{n-1} 
    \Biggr\} 
\end{align*}
    We call $\phi$ the \emph{hyperbolic angle} of the vector.
\end{definition}

\begin{lemma}[Hyperbolic and Euclidean point of view]\label{ll6}
    There is a constant $C>0$ such that
    \begin{align}
        |x-\tilde x|_{\op{euc}} \leq C \cosh(\phi_{max})\, d_{\Hy}(x,\tilde x) 
        \leq C\, \max\{|x|_{\op{euc}},|\tilde x|_{\op{euc}}\}\, d_{\Hy}(x,\tilde x)\label{eqah}
    \end{align}
    for all $x,\tilde x\in \R^{1,n-1}$ with $\phi_{max}$ being the maximum of the hyperbolic angles $\phi$ and $\tilde \phi$ of $x$ and $\tilde x$, respectively.  
\end{lemma}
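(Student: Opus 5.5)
The statement is meant for $x,\tilde x\in\Hy$, since the hyperbolic angles and $d_{\Hy}$ are only defined there. The plan is to compare Euclidean and hyperbolic length along the hyperbolic geodesic joining $x$ and $\tilde x$. The second inequality in \eqref{eqah} is immediate. For $x=(\cosh\phi,\sinh\phi\, y)\in\Hy$ we have $|x|_{\op{euc}}^2=\cosh^2\phi+\sinh^2\phi\geq\cosh^2\phi$, so $\cosh(\phi_{max})\leq\max\{|x|_{\op{euc}},|\tilde x|_{\op{euc}}\}$. The work is therefore in the first inequality, which I would prove with $C=\sqrt2$.

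\textbf{Step 1: the geodesic.} Let $\gamma:[0,L]\to\Hy$ be the unit-speed minimizing geodesic from $x$ to $\tilde x$, where $L=d_{\Hy}(x,\tilde x)$; it exists because $\Hy$ is complete. Explicitly, $\gamma(t)=\cosh(t)\,x+\sinh(t)\,u$ with $u\in T_x\Hy$ satisfying $\eta(u,u)=1$ and $\eta(u,x)=0$. The first coordinate $\gamma_1(t)=\cosh\phi(t)$ then satisfies $\gamma_1''=\gamma_1>0$. Hence $\gamma_1$ is convex on $[0,L]$, so its maximum is attained at an endpoint, and
\[
\gamma_1(t)\leq\cosh(\phi_{max})\qquad\text{for all }t\in[0,L].
\]

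\textbf{Step 2: pointwise bound on the Euclidean speed.} Write $\gamma=(\gamma_1,\bar\gamma)$ and $\dot\gamma=(\dot\gamma_1,\dot{\bar\gamma})$, with $\bar\gamma,\dot{\bar\gamma}\in\R^{n-1}$. Three facts are used.
\begin{itemize}
\item From $\eta(\dot\gamma,\dot\gamma)=1$ we get $|\dot{\bar\gamma}|^2=1+\dot\gamma_1^2$.
\item From $\eta(\gamma,\gamma)=-1$ we get $|\bar\gamma|^2=\gamma_1^2-1$.
\item From $\eta(\gamma,\dot\gamma)=0$ and Cauchy--Schwarz, $\gamma_1\dot\gamma_1=\bar\gamma\cdot\dot{\bar\gamma}$, so $\gamma_1^2\dot\gamma_1^2\leq(\gamma_1^2-1)(1+\dot\gamma_1^2)$.
\end{itemize}
The last inequality rearranges to $\dot\gamma_1^2\leq\gamma_1^2-1$. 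Therefore
\[
|\dot\gamma|_{\op{euc}}^2=1+2\dot\gamma_1^2\leq 2\gamma_1^2-1\leq 2\cosh^2(\phi_{max}).
\]

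\textbf{Step 3: integrate.} Since the Euclidean distance is at most the Euclidean length of $\gamma$,
\[
|x-\tilde x|_{\op{euc}}\leq\int_0^L|\dot\gamma|_{\op{euc}}\,dt\leq\sqrt2\,\cosh(\phi_{max})\,d_{\Hy}(x,\tilde x).
\]
This is the first inequality in \eqref{eqah} with $C=\sqrt2$.

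The only non-mechanical point, and the one I expect to be the main obstacle, is controlling $\dot\gamma_1$ in Step 2. The naive bound on the Euclidean speed involves $\dot\gamma_1$, which is not a priori bounded. The way around this is the Cauchy--Schwarz argument using the orthogonality $\eta(\gamma,\dot\gamma)=0$, combined with the convexity of $\gamma_1$ from Step 1, which prevents the geodesic from rising above the endpoint angles.
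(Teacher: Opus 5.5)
Your proof is correct and follows the paper's argument: the paper also runs along the hyperbolic geodesic $\cosh(t)x+\sinh(t)v$, uses convexity of the first coordinate ($\ddot c_1=c_1>0$) to bound the angle by $\phi_{max}$, and integrates a pointwise comparison of Euclidean and hyperbolic speed. The only difference is in the speed bound. The paper computes both speeds in polar form $(\cosh\phi,\sinh\phi\,y)$, whereas your Cauchy--Schwarz computation in Cartesian coordinates reaches the same estimate $\dot\gamma_1^2\le\sinh^2\phi$ and avoids the degeneracy of polar coordinates at $\phi=0$.
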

\begin{proof}
    The second inequality is trivial. For the first, see Appendix \ref{App: C}.
\end{proof}

\subsection{Proof of Lemma \ref{la}}\label{sa}

We will now start with the proof of Lemma \ref{la}. Let $E_1(y),...,E_n(y)$ be a smooth orthonormal frame w.r.t.\ $g$ in a neighbourhood $V$ of $y_*$, with $E_1$ future directed timelike. We may modify the complete Riemannian metric $h$ such that $E_1,...,E_n$ is a local orthonormal frame for $h$ as well (i.e.\ $h(E_1,E_1)\equiv 1$). 
    
We may assume that there exists a constant $C_1>0$ such that
\begin{align}
    |\langle v,w\rangle_g|\leq C_1\, |v|_h\, |w|_h \quad \text{ for all } y\in V \text{ and } v,w\in T_yM.\label{eqap}
\end{align}
Let us construct the neighbourhood $U$ asserted by Lemma \ref{la}. Fix an open precompact neighbourhood $U$ of $v_*$ and $r>0$ such that, for all $v\in U$, the geodesic $c_v$ is defined on $[0,r]$ with $\rho(v)<r$ and $c_v(\rho(v))\in V$. We may choose $C_1$ larger such that Lemmas \ref{lh} and \ref{ll6} apply and such that
\begin{align}
    \sup\{|\dot J(t,v,w)|+|J(t,v,w)|\mid v\in \overbar U,\ |w|_h\leq 1,\ t\in [0,r]\}\leq C_1. \label{eqad}
\end{align}
Recall that $J(t,v,w)$ is the Jacobi field along $c_v$ with $J(0,v,w)=0$ and $\dot J(0,v,w)=w$.
\medskip

Fix $v_0,v_1\in U$ and $s_0$ as in the hypothesis of the lemma. Recall that 
\[
    v(s):=(1-s)v_0+sv_1.
\]
By abuse of notation, we write $\rho(s)$ for $\rho(v(s))$.

\begin{definition}\rm
In analogy with \cite{Itoh/Tanaka}, we define the continuous curve
    \[
        c(s):=c_{v(s)}(\rho(s)),\ s\in [0,1],
    \]
    as well as the vector fields along the curve $c(s)$ (resp.\ the parametrized surface $c_{v(s)}(t)$)
    \[
        e_1(s):=-\frac{\dot c_{v(s)}(\rho(s))}{|\dot c_{v(s)}(\rho(s))|_g} \quad \text{ and } \quad Y(t,s):=\frac{\partial }{\partial s} c_{v(s)}(t),\ t\in [0,r].
    \]
\end{definition}

\begin{lemma}[A first formula for the derivative]\label{ef}
    It holds 
    \[
        \dot c(s_0)-Y(\rho(s_0),s_0) = \rho'(s_0)\,  \dot c_{v(s_0)}(\rho(s_0)).
    \]
    In particular, we get
    \begin{align}
        |v(s_0)|_g\, \rho'(s_0) = \langle \dot c(s_0)-Y(\rho(s_0),s_0),e_1\rangle_g. \label{eqt}
    \end{align} 
\end{lemma}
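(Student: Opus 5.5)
The plan is to differentiate the identity $c(s)=c_{v(s)}(\rho(s))$ at $s_0$ with the chain rule, and then to pair the result with $e_1$.

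First, I justify differentiability. The statement presupposes that $\rho\circ v$ is differentiable at $s_0$, and I interpret $\dot c(s_0)$ as the derivative of $c$ at $s_0$. Consider the map $F(t,s):=c_{v(s)}(t)=\exp_x(t\,v(s))$. It is smooth on a neighbourhood of $(\rho(s_0),s_0)$, since $c_{v(s)}$ is defined on $[0,r]$ with $\rho(s)<r$ for $v(s)\in U$. Moreover $c(s)=F(\rho(s),s)$, and $\rho$ is differentiable at $s_0$ (and continuous nearby, by Lemma \ref{lq}). Hence $c$ is differentiable at $s_0$.

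Next, the chain rule gives
\[
\dot c(s_0)=\partial_t F(\rho(s_0),s_0)\,\rho'(s_0)+\partial_s F(\rho(s_0),s_0).
\]
By definition, $\partial_t F(\rho(s_0),s_0)=\dot c_{v(s_0)}(\rho(s_0))$ and $\partial_s F(\rho(s_0),s_0)=Y(\rho(s_0),s_0)$. Rearranging yields the first identity.

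For \eqref{eqt}, take the $g$-inner product of the first identity with $e_1(s_0)=-\dot c_{v(s_0)}(\rho(s_0))/|\dot c_{v(s_0)}(\rho(s_0))|_g$. The vector $v(s_0)$ is a convex combination of the future timelike vectors $v_0,v_1$, so it is future timelike. Since $g(\dot c,\dot c)$ is constant along a geodesic, we have $\langle \dot c_{v(s_0)},\dot c_{v(s_0)}\rangle_g=-|v(s_0)|_g^2$ and $|\dot c_{v(s_0)}(\rho(s_0))|_g=|v(s_0)|_g$. Therefore
\[
\langle \dot c_{v(s_0)}(\rho(s_0)),e_1\rangle_g=|v(s_0)|_g^2/|v(s_0)|_g=|v(s_0)|_g,
\]
which gives \eqref{eqt}.

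The only delicate point is the first step, namely that the composition is differentiable at the single point $s_0$. This follows because $F$ is smooth jointly in $(t,s)$ near $(\rho(s_0),s_0)$, so the pointwise chain rule applies once $\rho$ is differentiable at $s_0$.
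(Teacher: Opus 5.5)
Your proposal is correct and follows the paper's own proof: view $c(s)$ as the parametrized surface $c_{v(s)}(t)$ evaluated at $t=\rho(s)$, apply the chain rule at $s_0$, and pair the result with $e_1$, using that the geodesic has constant speed $|v(s_0)|_g\neq 0$. Your appeal to continuity of $\rho$ near $s_0$ is not needed, since differentiability of $\rho$ at $s_0$ together with smoothness of $\exp_x$ near $\rho(s_0)v(s_0)$ already justifies the pointwise chain rule.
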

\begin{proof}
    Consider the parametrized surface $f(s,t):=c_{v(s)}(t)$ as above. As $\rho(s)$ is differentiable at $s_0$, so is $c(s)=f(s,\rho(s))$, and
    \begin{align*}
        \dot c(s_0)-Y(\rho(s_0),s_0) 
        &= \frac{\partial f}{\partial s}(s_0,\rho(s_0))+\rho'(s_0)\, \frac{\partial f}{\partial t}(s_0,\rho(s_0))
        -\frac{\partial f}{\partial s}(s_0,\rho(s_0))
        \\[5pt]
        &=\rho'(s_0)\,  \dot c_{v(s_0)}(\rho(s_0)).
        \qedhere
    \end{align*}
    This proves the first part. It follows that
    \[
        \langle \dot c(s_0)-Y(\rho(s_0),s_0),e_1\rangle_g = |\dot c_{v(s_0)}(\rho(s_0))|_g \rho'(s_0) = |v(s_0)|_g\, \rho'(s_0),
    \]
    having used that geodesics have constant speed.
\end{proof}

For the next lemma, recall the definition of sub-gradients via the duality of tangent and cotangent vectors given by $v\mapsto \langle v,\rangle_g$. We also define the function $d_x(y):=d(x,y)$. The first part of the following Lemma corresponds to Lemma 2.1 in \cite{Itoh/Tanaka}.

\begin{lemma}[Limit sub-gradients are optimal in the first variation]
    Let $s_k\downarrow s_0$ and $v_k\in \partial^-d_{x}(c(s_k))$ be such that $v_k\to v_0\in \partial^-d_{x}(c(s_0))$.  Then
    \begin{align}
        \langle v_0,\dot c(s_0)\rangle_g = \max\{\langle v,\dot c(s_0)\rangle_g\mid v\in \partial^-d_{x}(c(s_0))\}. \label{eqq}
    \end{align}
    Moreover, there exists a second (i.e.\ distinct from $c_{v_0}$) maximizing geodesic $c_{w(s_0)}:[0,\rho(w(s_0))]\to M$ from $x$ to $c(s_0)$ such that $\rho(w(s_0))=\rho(s_0)$ and
    \[
        \dot c(s_0)\perp e_1(s_0)-e_1(w(s_0)),\ \text{where } e_1(w(s_0)):=-\frac{\dot c_{w(s_0)}(\rho(w(s_0)))}{|\dot c_{w(s_0)}(\rho(w(s_0)))|_g}.
    \]
\end{lemma}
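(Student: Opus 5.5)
Both parts rest on the semiconvexity inequality \eqref{eqao} for $d_x$ near $c(s_0)$. The points $c(s_0)$ and $c(s_k)$ lie in $I^+(x)$, since the $v(s)$ are timelike, and in the fixed neighbourhood $V$.

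\textbf{Proof of \eqref{eqq}.} Fix any $v\in\partial^-d_x(c(s_0))$. We may assume $\dot c(s_0)$ exists, because $\rho$ is differentiable at $s_0$. Write $c(s_k)=\exp_{c(s_0)}(\xi_k)$ and $c(s_0)=\exp_{c(s_k)}(\zeta_k)$, where
\[
\xi_k=(s_k-s_0)\dot c(s_0)+o(s_k-s_0),\qquad \zeta_k=-(s_k-s_0)\,P_k\dot c(s_0)+o(s_k-s_0),
\]
and $P_k$ is parallel transport along the short $h$-geodesic. Applying \eqref{eqao} once at $c(s_0)$ with the sub-gradient $v$, and once at $c(s_k)$ with the sub-gradient $v_k$, gives
\[
d_x(c(s_k))\ge d_x(c(s_0))+\langle v,\xi_k\rangle_g-K|\xi_k|_h^2,
\]
\[
d_x(c(s_0))\ge d_x(c(s_k))+\langle v_k,\zeta_k\rangle_g-K|\zeta_k|_h^2.
\]
Adding the two inequalities, dividing by $s_k-s_0>0$ and letting $k\to\infty$ (using $v_k\to v_0$) yields $0\ge\langle v,\dot c(s_0)\rangle_g-\langle v_0,\dot c(s_0)\rangle_g$. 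Since $v_0$ is itself a sub-gradient, the maximum in \eqref{eqq} is attained at $v_0$.

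\textbf{Second geodesic.} The key input is that $d_x(c(s))=\rho(s)|v(s)|_g$ is differentiable at $s_0$. First I show that $\langle u,\dot c(s_0)\rangle_g$ is the same for every sub-gradient $u$. Applying \eqref{eqao} at $c(s_0)$ along $c$ in both directions $s>s_0$ and $s<s_0$ gives
\[
\tfrac{d}{ds}\big|_{s_0}d_x(c(s))\ \ge\ \langle u,\dot c(s_0)\rangle_g\ \ge\ \tfrac{d}{ds}\big|_{s_0}d_x(c(s)),
\]
so $\langle u,\dot c(s_0)\rangle_g$ equals the derivative for every sub-gradient $u$.

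Next I need two distinct sub-gradients of the form $e_1$. The own geodesic $c_{v(s_0)}$ is maximizing up to $\rho(s_0)$, so by \eqref{eqsub} $e_1(s_0)$ is a sub-gradient. Since $\rho(s_0)<\lambda(s_0)$, Lemma~\ref{lr} (cut point not conjugate) provides a second maximizing geodesic $c_w$ from $x$ to $c(s_0)$ with $w\neq v(s_0)$. Again by \eqref{eqsub}, $e_1(w)$ is a sub-gradient, and it is distinct from $e_1(s_0)$ by uniqueness of geodesics, normalized suitably.

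For the normalization, I rescale $w$ so that $\rho(w)=\rho(s_0)$. This is possible by $(-1)$-homogeneity, since $c_w$ reaches $c(s_0)$ at its cut time; the normalization does not change $e_1(w)$. Subtracting the two equal pairings gives $\dot c(s_0)\perp e_1(s_0)-e_1(w(s_0))$.

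\textbf{Main obstacle.} The delicate step is producing the second geodesic. If I cannot cite the ``non-conjugate cut point implies two maximizing geodesics'' fact directly, I would obtain it by a limiting argument. I would take $s_k\downarrow s_0$ and pick maximizing geodesics to $c(s_k)$ that are \emph{not} the own geodesic $c_{v(s_k)}$; such geodesics exist when $c(s_k)$ is a cut point not conjugate. Then Lemma~\ref{lr} lets me extract limits. If every limit coincided with $c_{v(s_0)}$, then $\exp_x$ would fail to be injective near the non-critical point $\rho(s_0)v(s_0)$, which is a contradiction.
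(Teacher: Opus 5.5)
Your proof of \eqref{eqq} is the paper's argument. For the second part you take a different route, and it is correct and somewhat simpler.

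\textbf{The paper's route.} It builds $w(s_0)$ as a limit. It takes second maximizing geodesics $c_{w_k}$ to $c(s_k)$ and extracts $w_k\to w(s_0)$ via Lemma \ref{lr}. Non-conjugacy of $c(s_0)$ gives $w(s_0)\neq v(s_0)$. It then applies \eqref{eqq} twice, to $e_1(s_k)\to e_1(s_0)$ and to $-\dot c_{w_k}(\rho(w_k))/|w_k|_g\to e_1(w(s_0))$. So both limits maximize $u\mapsto\langle u,\dot c(s_0)\rangle_g$ over $\partial^-d_x(c(s_0))$, hence pair equally with $\dot c(s_0)$.

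\textbf{Your route.} You observe that $s\mapsto d_x(c(s))=\rho(s)|v(s)|_g$ is differentiable at $s_0$. Applying the sub-gradient inequality for $s>s_0$ and for $s<s_0$ then forces $\langle u,\dot c(s_0)\rangle_g=(d_x\circ c)'(s_0)$ for \emph{every} sub-gradient $u$. Consequently any second maximizing geodesic to $c(s_0)$ works, and no compactness or limiting argument is needed.

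\textbf{Comparison.} Your argument does not use \eqref{eqq} at all; in fact it shows \eqref{eqq} holds trivially for every sub-gradient at such $s_0$. It also gives the orthogonality for all maximizing geodesics to $c(s_0)$, not just one particular limit. What the paper's Itoh--Tanaka style argument buys is that it only uses right-hand difference quotients of $c$ at $s_0$. Your argument needs two-sided differentiability at an interior $s_0$. That is harmless here, since only almost every $s_0$ matters in Lemma \ref{la}.

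\textbf{Two corrections.}
\begin{enumerate}[(i)]
\item The existence of the second geodesic at $c(s_0)$ does not come from Lemma \ref{lr}, which is the compactness lemma. It comes from Theorem \ref{thme} (or the last sentence of Lemma \ref{lj}), applied with $\rho(s_0)<\lambda(v(s_0))$. The converse half of Theorem \ref{thme} is what shows that $c_w$ stops maximizing exactly at $c(s_0)$, since $c_{v(s_0)}$ is a second maximizing geodesic to that point. This justifies your rescaling to $\rho(w)=\rho(s_0)$.
\item Drop the fallback argument in your last paragraph. It presupposes the same fact at the points $c(s_k)$, so it is circular, and it is unnecessary anyway.
\end{enumerate}
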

\begin{proof}
    For all $v\in \partial^-d_{x_0}(c(s_0))$, the semiconvexity of $d$ (cf.\ \eqref{eqao}) yields the existence of $K>0$ such that, for large $k$,
    \begin{align*}
        &d(x,c(s_k)) - d(x,c(s_0))
        \geq 
        \langle v,c(s_k)-c(s_0)\rangle_g  -K|c(s_k)-c(s_0)|_h^2 \quad \text{ and }
        \\[5pt]
        &d(x,c(s_0)) - d(x,c(s_k))
        \geq 
        \langle v_k,c(s_0)-c(s_k)\rangle_g  -K|c(s_k)-c(s_0)|_h^2,
    \end{align*}
    where we use the notation $b-a:=\exp_a^{-1}(b)$ for $b$ close to $a$.
    Note that differentiability of $c$ at $s_0$ gives
    \[
        \frac{c(s_k)-c(s_0)}{s_k-s_0}\to \dot c(s_0), \quad \text{ whereas } \quad \frac{c(s_0)-c(s_k)}{s_k-s_0} \to -\dot c(s_0). 
    \]
   Thus, adding both inequalities and dividing by $s_k-s_0$, we obtain 
   \[
        0 \geq \langle v-v_0,\dot c(s_0)\rangle_g.
    \]
    This gives \eqref{eqq}.

    We prove the second part. Since $\rho(s_k)=\rho(v(s_k))<\lambda(v(s_k))$ for all $k$, Theorem \ref{thme} ensures the existence of some $w_k\in T_{x}M$ with $\rho(w_k)=\rho(s_k)$ and $w_k\neq v(s_k)$ such that $c_{w_k}:[0,\rho(w_k)]\to M$ is a  maximizing geodesic from $x$ to $c(s_k)$. By compactness of maximizing geodesics (Lemma \ref{lr}), after passing to a subsequence, $w_k$ converges to some $w(s_0)\in T_{x}M$, and $c_{w(s_0)}:[0,\rho(w(s_0))]\to M$ is a maximizing geodesic from $x$ to $c(s_0)$ with $\rho(w(s_0))= \rho(s_0)$. Since $c(s_0)$ not conjugate to $x$ along $c_{v(s_0)}$, it follows $w(s_0)\neq v(s_0)$. We then apply the first part of the lemma twice – to $v_k:=e_1(s_k)$ and to $v_k:= -\dot c_{w_k}(\rho(w_k))/|w_k|_g$ – and conclude the proof in sight of \eqref{eqq}.
\end{proof}

We are ready to prove Lemma \ref{la}. We follow the approach of \cite{Itoh/Tanaka}. Note that our analysis to get to \eqref{eqae} (corresponding to (2.23) in \cite{Itoh/Tanaka}) is faster.

\begin{proof}[Proof of Lemma \ref{la}]
    Set $C:=2C_1^9$. Let us denote $v:=v(s_0)$ and $w:=w(s_0)$. Since $e_1(s_0)$ and $e_1(w)$ is past-directed timelike, we may write
    \begin{align}
        e_1(w) = \cosh(\phi)e_1(s_0)+\sinh(\phi)e_3 \label{eqs}
    \end{align}
    for some $g$-unit vector $e_3\perp e_1(s_0)$ and some hyperbolic angle $\phi\in [0,\infty)$. Denote $e_1:=e_1(s_0)$ and $Y:=Y(\rho(s_0),s_0)$.
    Using first $\dot c(s_0)\perp e_1-e_1(w)$ and then $\dot c(s_0)-Y\parallel e_1\perp e_3$ (Lemma \ref{ef}), we easily get
    \[
      \langle \dot c(s_0),e_1\rangle_g = -\coth\Big(\frac{\phi}{2}\Big)\, \langle \dot c(s_0),e_3\rangle_g  
      =-\coth\Big(\frac{\phi}{2}\Big)\, \langle Y,e_3\rangle_g.
    \]
    Hence, by Lemma \ref{ef},
    \begin{align}
        |v|_g\, \rho'(s_0) 
        &= -\coth\Big(\frac{\phi}{2}\Big)\, \langle Y,e_3\rangle_g -\langle Y,e_1\rangle_g
         \nonumber
        \\[5pt]
        (\text{by } \eqref{eqs})\qquad  &=
        -\coth\Big(\frac{\phi}{2}\Big)\, \Big\langle Y,\frac{e_1(w)}{\sinh(\phi)} 
        -\coth(\phi)\, e_1\Big\rangle_g 
        -\langle Y,e_1\rangle_g. \nonumber
    \\[5pt]
    &= \frac{1}{2\sinh^2(\phi/2)}\, \langle Y,e_1-e_1(w)\rangle_g. \label{eqae}
    \end{align}
    Up to this point, the computation was precise (compare \eqref{eqae} to (2.23) in \cite{Itoh/Tanaka}). Now we start estimating. Let $X=J(\cdot,v,w-v)$ denote the Jacobi field along $c_{v}$ with $X(0)=0$ and $\dot X(0)=w-v$. By Lemma \ref{lh}, we have 
    \[
        \Big|e_1(w)-e_1 + \frac{\dot X(\rho(s_0))}{|v|_g}\Big|_h \leq C_1\, \frac{|v-w|_h^2}{|v|_g}.
    \]
    Inserting this estimate into \eqref{eqae} and using \eqref{eqap}, we deduce
    \[
        |\rho'(s_0)|
        \leq
        \frac{1}{2\sinh^2(\phi/2)|v|_g^2}\,  \Big(|\langle Y,\dot X(\rho(s_0))\rangle_g|+C_1^2 \, |v-w|_h^2\, |Y|_h\Big).
    \]
    Being $Y(\cdot,s_0)$ and $X$ Jacobi fields along $c_v$ vanishing at $0$, we can swap the covariant derivative from $X$ to $Y(\cdot,s_0)$ (Lemma \ref{ld}); using then Lemma \ref{lh}(iii) together with \eqref{eqap}, we obtain
    \begin{align*}
        |\rho'(s_0)| 
        \leq 
        \frac{|v-w|_h^2}{2\sinh^2(\phi/2) |v|_g^2}\Big( C_1^2|\dot Y|_h+ C_1^2|Y|_h\Big).
    \end{align*}
    Here $\dot Y:=\dot Y(\rho(s_0),s_0)$. For the term in front of the paranthesis, we use Lemma \ref{lh}(ii), while we use \eqref{eqad}  and $Y(t,s_0)=J(t,v,v_1-v_0)$ to estimate the paranthesis. We end up with
    \begin{align}
        |\rho'(s_0)| 
        \leq 
        \frac{C_1^5|\dot c_{v}(\rho(s_0))-\dot c_{w}(\rho(s_0))|_h^2}{2\sinh^2(\phi/2) |v|_g^2}|v_0-v_1|_h
        = \frac{C_1^5 |e_1-e_1(w)|_h^2}{2\sinh^2(\phi/2)}|v_0-v_1|_h, \label{eqz}
    \end{align}
    where we used $|v|_g=|w|_g$ and the definition of $e_1$ and $e_1(w)$ in the last step.
    
    Recall that $E_1,...,E_n$ is an orthonormal basis for $h$ on $V$. Therefore, Lemma \ref{ll6} combined with the trivial bound $\sinh(\phi/2)\geq \phi/2$ (recall the well-known fact that $\phi$ is just the hyperbolic distance between $e_1$ and $e_1(w)$ on the unit hyperboloid in $T_{c_v(\rho(s_0))}M$) yields
    \[
            |\rho'(s_0)| \leq 2C_1^7\, \max\{|e_1|_h^2,|e_1(w)|_h^2\}\, |v_0-v_1|_h.
    \]
    Lemma \ref{lh}(i) combines with $|w|_g=|v|_g$ to assert $|e_1(w)|_h\leq C_1 |v|_g^{-1}$, the same bound holding also for $e_1$. Inserting this into the above inequality, and using the fact that, thanks to the concavity of $|\cdot|_g$ on the future causal cone $\C_x$, it holds $|v|_g\geq \min\{|v_0|_g,|v_1|_g\}$, we conclude the proof.
\end{proof}

\section{Lipschitz continuity in null directions}\label{sec: ext. to null}

In this short section, we prove Corollary \ref{cor3}.

\begin{proof}[Proof of Corollary \ref{cor3}]
    It suffices to prove Lemma \ref{la} without the denominator in \eqref{eqaf} 
    Indeed, once this is proven, Corollary \ref{cor3} follows mutatis mutandis as Theorem \ref{thmd} followed from Lemma \ref{la}. 

    We repeat the same proof as above, in particular we stick to the notation in the introduction of Section \ref{sa}. However, we will choose $U$ possibly smaller: First of all, we may suppose that for $v\in U$ and $w\in T_{x}M$ future directed causal with $|v|_g=|w|_g$ and $c_v(\rho(v))=c_w(\rho(w))$, we have
    \[
        \langle \dot c_v(\rho(v)),\dot c_w(\rho(w))\rangle_g \leq -\ep
    \]
    for some uniform $\ep>0$. This is an easy consequence of the fact that $c_{v_*}(\rho(v_*))$ is not conjugate to $x$ along $c_{v_*}$ (see Lemma \ref{lj}). Moreover, since $|v_*|_g=0$, we may suppose that $\cosh^{-1}(\ep/|v|_g^2)\geq 1$ for all future directed timelike $v\in U$.

    We do the same computations as in the preceding section up to \eqref{eqz}, which reads
   \begin{align}
        |\rho'(s_0)| \leq \frac{C_1^5\, |e_1-e_1(w)|_h^2}{2\sinh^2(\phi/2)}\, |v_0-v_1|_h.  \label{eqzz}
    \end{align}
    By \eqref{eqs}, the hyperbolic angle $\phi$ between $e_1$ and $e_1(w_0)$ is given by
    \[
        \phi = \cosh^{-1}(-\langle e_1,e_1(w)\rangle_g) \geq \cosh^{-1}\Big(\frac{\ep}{|v|_g^2}\Big)\geq 1.
    \]
    Hence, using $\sinh(a/2)^2\geq \cosh(a)/8$, $a\geq 1$, we get
    \[
        \sinh^2(\phi/2) \geq \frac{\ep}{8|v|_g^2}.
    \]
    Therefore, we conclude from \eqref{eqzz} that
    \[
        |\rho'(s_0)| \leq \frac{8C_1^5\, |\dot c_{v}(\rho(v))-\dot c_{w}(\rho(w))|_h^2}{\ep}\, |v_0-v_1|_h. 
    \]
    We conclude the proof in view of Lemma \ref{lh}(i).
\end{proof}

\appendix
\renewcommand{\thesection}{\Alph{section}}
\setcounter{section}{0}

\section*{Appendix} 

\section{Cut points and conjugate points}\label{App: A}

In this appendix, we present a brief survey of cut points and conjugate points. We begin by reviewing the definitions and basic properties of Jacobi fields and then establish the relationships between conjugate points, the Lorentzian index form, and the maximality of geodesics. We conclude by proving continuity properties of the focalization time and the cut time.

Most of the results presented here are classical and can be found in \cite{Ehrlich} (see also \cite{ONeill}). However, since these results are scattered across several chapters of the literature, and a few of the statements included here are new, we provide a unified and almost self-contained exposition.

Recall that $(M,g)$ always denotes a globally hyperbolic spacetime.

\subsection{Conjugate points and the focalization time}

\begin{definition}[Jacobi fields and conjugate points]\rm\label{defa}
Let $c:I\to M$ be a geodesic.
    \begin{enumerate}[(a)]
        \item A \emph{Jacobi field along $c$} is a smooth vector field $J$ along $\gamma$ such that
        \begin{align}
            \ddot J+R(J,\dot c)\dot c=0, \label{eqar} 
        \end{align}
        where $\ddot J=\frac{D^2J}{dt^2}$ denotes the second covariant derivative and 
        \[
        R(X,Y)Z:=\nabla_X \nabla_Y Z-\nabla_Y \nabla_X Z-\nabla_{[X,Y]}Z
        \]
        is (our convention of) the Riemannian curvature tensor.

    Writing \eqref{eqar} in coordinates w.r.t.\ an orthonormal frame along $c$, the equation turns out to be a linear ODE (\cite{DoCarmo}, Chapter 5; see also Lemma \ref{lc}(a)). It follows that Jacobi fields are uniquely determinded by the initial conditions $J(0)$ and $\dot J(0)$. If $x\in M$ is fixed, we denote by $J(t,v,w)$. $v,w\in T_xM$, the unique Jacobi field along $c_v$ with $J(0,v,w)=0$ and $\dot J(0,v,w)=w$. Recall that $c_v$ is the geodesic emerging from $x$ with initial velocity $v$.
        \item 
        Let $t_0,t_1\in I$, $t_0\neq t_1$. Then $c(t_1)$ is said to be a \emph{conjugate point} of $c(t_0)$ along $c$ if there exists a non-zero Jacobi field $J$ along $c$ such that $J(t_0)=J(t_1)=0$. 
    \end{enumerate}
\end{definition}

\begin{remark}\rm \label{hdsdoa}
    Let $c:[0,a]\to M$ be a geodesic. Then $c(a)$ is a conjugate point of $c(0)$ along $c$ if and only if $a\dot c(0)$ is a critical point of the exponential map $\exp_{c(0)}:T_{c(0)}M\to M$, i.e.\ if
    \[
        \op{ker}(d_{a\dot c(0)}(\exp_{c(0)}))\neq \{0\}
    \]
    (Proposition 3.5 in \cite{DoCarmo}).
    This follows from the fact that the Jacobi field $J(t,v,w)$ is explicitely given by
    \begin{align}
        J(t,v,w)=d_{t\dot c(0)}\exp_{c(0)}(tw). \label{eq: Jac. repr.}
    \end{align}
    Since $d_0\exp_x=\op{Id}_{T_xM}$, it follows that each $v_0\in T_xM$ admits an open neighbourhood $U$ and some $\ep>0$ such that $J(t,v,w)\neq 0$ for all $v\in U$, $0\neq w\in T_xM$ and $t\in [0,\ep]$.
\end{remark}

\begin{lemma}\label{ld}
    Let $J,X$ be Jacobi fields along a common geodesic. Then $\langle \dot J,X\rangle_g -\langle J,\dot X\rangle_g$ is constant.
\end{lemma}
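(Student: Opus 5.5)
Since $J$ and $X$ are Jacobi fields along the same geodesic $c$, they are smooth, so the function $f(t):=\langle \dot J(t),X(t)\rangle_g-\langle J(t),\dot X(t)\rangle_g$ is differentiable. The plan is to show $f'\equiv 0$ on the (connected) interval $I$, which gives constancy.

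First, because the Levi-Civita connection is compatible with $g$, the product rule holds for covariant derivatives along $c$. Differentiating gives
\[
f'(t)=\langle \ddot J,X\rangle_g+\langle \dot J,\dot X\rangle_g-\langle \dot J,\dot X\rangle_g-\langle J,\ddot X\rangle_g=\langle \ddot J,X\rangle_g-\langle J,\ddot X\rangle_g .
\]

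Next, substitute the Jacobi equation \eqref{eqar}, namely $\ddot J=-R(J,\dot c)\dot c$ and $\ddot X=-R(X,\dot c)\dot c$. This yields
\[
f'(t)=-\langle R(J,\dot c)\dot c,X\rangle_g+\langle R(X,\dot c)\dot c,J\rangle_g .
\]

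Finally, the pair symmetry of the curvature tensor, $\langle R(A,B)C,D\rangle_g=\langle R(C,D)A,B\rangle_g$, gives
\[
\langle R(J,\dot c)\dot c,X\rangle_g=\langle R(\dot c,X)J,\dot c\rangle_g=\langle R(X,\dot c)\dot c,J\rangle_g,
\]
where the second equality applies skew-symmetry in both the first and the last pair. Hence $f'\equiv 0$. This symmetry holds for any Levi-Civita connection regardless of signature, so nothing Lorentzian-specific enters. There is no real obstacle; the only point to watch is the sign convention for $R$, and since both curvature terms carry the same convention, the argument does not depend on it.
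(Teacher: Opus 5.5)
Your proposal is correct and follows essentially the paper's own argument, which the paper states only as ``differentiate and use the symmetries of $R$.'' You carry out exactly that: differentiate $f$, substitute the Jacobi equation, and use pair symmetry plus skew-symmetry of $R$ to cancel the two curvature terms.
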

\begin{proof}
    Follows by differentiating and using the symmetries of $R$.
\end{proof}

\begin{lemma}\label{lc}
    \begin{enumerate}[(i)]
        \item Let $x\in M$. Then $J=J(t,v,w)$ depends smoothly on $t$, $v$ and $w$.
        \item Let $V\subseteq M$ be a convex set. For $a>0$ and $x,y\in V$, let $c:[0,a]\to M$ be the unique geodesic in $V$ with $c(0)=x$ and $c(a)=y$. For any $u\in T_yM$, there exists a unique Jacobi field $J$ along $c$ such that $J(0)=0$ and $J(a)=w$. Moreover, $J$ depends smoothly on $a$, $x$, $y$ and $w$.
        \end{enumerate}
\end{lemma}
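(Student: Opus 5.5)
For part (i) I would use the explicit representation \eqref{eq: Jac. repr.}, namely $J(t,v,w)=d_{tv}\exp_x(tw)$. Since $(t,v)\mapsto \exp_x(tv)$ is smooth on its open domain in $\R\times T_xM$, its differential is smooth too. Hence $(t,v,w)\mapsto d_{tv}\exp_x(tw)$ is smooth wherever $c_v(t)$ is defined. As a cross-check, independent of Remark \ref{hdsdoa}: write \eqref{eqar} in the parallel frame $E_i(t,v)$ along $c_v$. This gives a linear ODE $\ddot a+A(t,v)a=0$ whose coefficients $A(t,v)=\big(\langle R(E_i,\dot c_v)\dot c_v,E_j\rangle_g\big)$ depend smoothly on $(t,v)$, with initial data $a(0)=0$ and $\dot a(0)=$ the coordinates of $w$. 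Smooth dependence of ODE solutions on parameters and initial data then gives the claim, since $E_i(t,v)$ itself depends smoothly on $(t,v)$.

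For part (ii) I would reduce the boundary value problem to invertibility of a differential of the exponential map.
\begin{enumerate}[(1)]
\item Jacobi fields along $c$ with $J(0)=0$ form an $n$-dimensional space, parametrized by $u=\dot J(0)\in T_xM$. By \eqref{eq: Jac. repr.}, $J(a)=d_{a\dot c(0)}\exp_x(au)$. So the map $u\mapsto J(a)$ is the linear map $a\,d_{a\dot c(0)}\exp_x:T_xM\to T_yM$.
\item By convexity of $V$, $\exp_x$ restricts to a diffeomorphism from an open star-shaped set $\tilde V_x\subseteq T_xM$ onto $V$, and $a\dot c(0)=\exp_x^{-1}(y)\in \tilde V_x$. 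Hence $d_{a\dot c(0)}\exp_x$ is a linear isomorphism; equivalently, $y$ is not conjugate to $x$ along $c$ (Remark \ref{hdsdoa}). This gives existence and uniqueness: $\dot J(0)=a^{-1}(d_{a\dot c(0)}\exp_x)^{-1}(w)$.
\item For smooth dependence, use the standard fact (O'Neill, Chapter 5) that for a convex $V$ the map $E:(x,\xi)\mapsto (x,\exp_x\xi)$ is a diffeomorphism from an open set $\tilde V\subseteq TM$ onto $V\times V$. Then $\dot c(0)=a^{-1}E^{-1}(x,y)$ depends smoothly on $(a,x,y)$. The linear map $d_{\xi}\exp_x$ depends smoothly on $(x,\xi)\in\tilde V$ and is invertible there, so its inverse depends smoothly as well, because matrix inversion is smooth. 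Thus the initial data $(c(0),\dot c(0),\dot J(0))$ depend smoothly on $(a,x,y,w)$. Part (i), applied with a smoothly varying base point (the ODE argument covers this), gives smooth dependence of $J$ itself.
\end{enumerate}

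The main obstacle is step (3), specifically keeping track of the base point $x$ varying rather than fixed. The clean way is to phrase everything via the map $E$ on $TM$: the Jacobi ODE in a local frame has coefficients smooth in $(x,\xi)$, and the final-value map $u\mapsto J(a)$ is an invertible matrix-valued function smooth in $(a,x,y)$. A small cosmetic point: in the proof of Theorem \ref{thmb} the lemma is applied with the prescribed values at $t=a$ and $t=\lambda(v)+s$ rather than at $0$ and $a$. This follows from the statement by an affine reparametrization of the geodesic segment inside $V$, under which Jacobi fields are preserved.
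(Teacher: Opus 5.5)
Your proposal is correct and follows essentially the same route as the paper. Part (i) goes through the linear Jacobi ODE in a parallel orthonormal frame along $c_v$ (equivalently, the representation $J(t,v,w)=d_{tv}\exp_x(tw)$), and part (ii) writes $J=J(\cdot,v,u)$ with $v=a^{-1}\exp_x^{-1}(y)$ and $u$ obtained by inverting $d_{av}\exp_x$, which convexity of $V$ makes possible and smooth.

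Your version is more careful than the paper's in two places. First, your $u=a^{-1}(d_{a\dot c(0)}\exp_x)^{-1}(w)$ is the correct inverse, whereas the paper's printed $d_{\exp_y^{-1}(x)}\exp_y(w)/a$ does not invert $d_{\exp_x^{-1}(y)}\exp_x$ in curved space. Second, you explicitly justify smoothness in the moving base point $x$ via the diffeomorphism $E$ onto $V\times V$, which the paper leaves implicit.
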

\begin{proof}
     For (i), let $E_i$, $i=1,...,n$, be an orthonormal basis of $T_xM$, with $E_1$ future directed timelike. Denote by $E_i(t,v)$ the parallel transport of $E_i$ along $c_v$, and note that $E_i$ depends smoothly on $t$ and $v$.
    
    For each Jacobi field $J(t,v,w)$, we can write (as in \cite{DoCarmo}, Chapter 5)
    \[
        J(t,v,w)=\sum_{i=1}^n f_i(t,v,w) E_i(t,v)
    \]
    for smooth functions $f_i(\cdot,v,w)$. The Jacobi equation becomes
    \[
        \left\{
        \begin{aligned}
            &\ep_i\, \ddot f_i(t,v,w) 
            + \sum_{j=1}^n f_j(t,v,w)\, 
              \bigl\langle R(E_j(t,v,w),\dot c_v)\dot c_v, E_i(t,v,w) \bigr\rangle_g = 0,\\[2mm]
            &f_i(0,v,w) = \ep_i\, \langle v, E_i \rangle_g, \quad 
             \dot f_i(0,v,w) = \ep_i\, \langle w, E_i \rangle_g, 
             \quad i = 1, \dots, n,
        \end{aligned}
        \right.
    \]
    with $\ep_i=-1$ for $i=1$ and $\ep_i=1$ otherwise.
    The conclusion follows since the solution of the linear ODE depends smoothly on $t$, $v$ and $w$.

    For (ii), let $v:=a^{-1}\exp_x^{-1}(y)$, so that $c_v(a)=y$ and hence $c=(c_v)|_{[0,a]}$. Since $V$ is convex, $d_{\exp_x^{-1}(y)}\exp_x$ is an isomorphism. Hence, the representation \eqref{eq: Jac. repr.} implies that $J$ is given by
    \[
        J(t):=J(t,v,d_{\exp_y^{-1}(x)}\exp_y(w)/a),
    \]
    which depends smoothly on $a$, $x$, $y$ and $w$ by convexity of $V$.
\end{proof}

\begin{definition}[Lorentzian index form, \cite{Ehrlich} Definition 10.4]\rm \label{synge}
    Let $c:[a,b]\to M$ be a future directed causal\footnote{Our definition differs from \cite{Ehrlich} as we allow for null geodesics.} geodesic, and denote by $V^\perp(c)$ the set of all piecewise smooth vector fields $X$ along $c$ such that $\langle X,\dot c\rangle_g\equiv0$ and $X(a)=X(b)=0$. The \emph{Lorentzian index form} is the symmetric and bilinear form $I:V^\perp(c)\times V^\perp(c)\to \R$ defined by
    \begin{align}
        I(X,Y)&:=-\int_a^b \langle \dot X,\dot Y\rangle_g -\langle R(X,\dot c)\dot c,Y\rangle_g\, dt \label{eqh}
        \\[5pt]
        & =\sum_{i=1}^n \Big\langle \dot X(a_i^+)-\dot X(a_i^-),Y(a_i)\Big\rangle_g
        + \int_a^b \langle \ddot X+R(X,\dot c)\dot c,Y\rangle_g\, dt, \nonumber
    \end{align}
    where $a=:a_0<a_1<...<a_n=:b$ is a partition of $[a,b]$ such that $X$ and $Y$ are smooth on each $[a_i,a_{i+1}]$, $R$ is the curvature tensor, and 
    \[
        \dot X(a_i^+) = \lim_{t\downarrow a_i} \dot X(t),\quad 
        \dot X(a_i^-) = \lim_{t\uparrow a_i} \dot X(t).
    \]
\end{definition}

\begin{lemma}[Synge's second variation formula]
     Let $c_s:[a,b]\to M$ be a smooth and proper\footnote{I.e.\ $c_s(a)=c(a)$ and $c_s(b)=c(b)$ for all $s$.} variation of the future directed timelike geodesic $c:[a,b]\to M$ with variational vector field $X(t):=\frac{\partial}{\partial s}\Big|_{s=0}c_s(t)$ belonging to $V^\perp(c)$. Then
    \begin{align*}
        \frac{d}{ds}\Big|_{s=0} \ell_g(c_s)=0\quad \text{ and }\quad 
        \frac{d^2}{ds^2}\Big|_{s=0} \ell_g(c_s)=\frac{1}{|\dot c|_g} I(X,X).
    \end{align*}
\end{lemma}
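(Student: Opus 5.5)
The plan is to compute the length $L(s):=\ell_g(c_s)=\int_a^b |\dot c_s(t)|_g\,dt$ directly. I will differentiate under the integral sign twice, and then use that $c=c_0$ is a geodesic, that the variation is proper, and that $X\perp \dot c$. Since $c$ is timelike and the variation is smooth, $\dot c_s$ stays timelike for small $|s|$. Hence $L(s)=\int_a^b\sqrt{-\langle \dot c_s,\dot c_s\rangle_g}\,dt$ is smooth near $s=0$ and derivatives may be taken inside the integral. I write $f(s,t):=c_s(t)$, $T:=\partial_t f$, $V:=\partial_s f$, and set $\phi(s,t):=-\langle T,T\rangle_g$. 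Then $\partial_s T=\nabla_s T=\nabla_t V$, since the covariant derivatives of a two-parameter map commute on coordinate fields. Finally $c$ has constant speed $\sigma:=|\dot c|_g$.

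\textbf{First variation.} We have $\partial_s\sqrt{\phi}=-\langle \nabla_t V,T\rangle_g/\sqrt{\phi}$. At $s=0$ the denominator is the constant $\sigma$, and integrating by parts gives
\[
L'(0)=-\frac1\sigma\Big(\big[\langle X,\dot c\rangle_g\big]_a^b-\int_a^b\langle X,\ddot c\rangle_g\,dt\Big).
\]
This vanishes because $\ddot c=0$ and $X(a)=X(b)=0$. In fact $\langle X,\dot c\rangle_g\equiv 0$ as well.

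\textbf{Second variation.} Differentiating once more,
\[
\partial_s^2\sqrt{\phi}=-\frac{\langle\nabla_s\nabla_t V,T\rangle_g+\langle\nabla_tV,\nabla_tV\rangle_g}{\sqrt\phi}-\frac{\langle\nabla_tV,T\rangle_g^2}{\phi^{3/2}}.
\]
At $s=0$ the last term is $\langle\dot X,\dot c\rangle_g^2/\sigma^3$. It vanishes because $\langle X,\dot c\rangle_g\equiv0$ and $\dot c$ is parallel, so $\langle \dot X,\dot c\rangle_g=\tfrac{d}{dt}\langle X,\dot c\rangle_g=0$. For the first term I use the curvature identity $\nabla_s\nabla_tV=\nabla_t\nabla_sV+R(V,T)V$, with the sign fixed by the paper's convention $R(X,Y)=\nabla_X\nabla_Y-\nabla_Y\nabla_X-\nabla_{[X,Y]}$ and $[\partial_s,\partial_t]=0$. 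This gives
\[
\langle\nabla_s\nabla_tV,T\rangle_g=\tfrac{d}{dt}\langle \nabla_sV,T\rangle_g-\langle\nabla_sV,\nabla_tT\rangle_g+\langle R(V,T)V,T\rangle_g .
\]
At $s=0$ the middle term vanishes since $\nabla_t T=\ddot c=0$. The total derivative integrates to $[\langle A,\dot c\rangle_g]_a^b$, where $A$ is the transverse acceleration $\nabla_sV|_{s=0}$. This is $0$ because properness forces $V(s,a)=V(s,b)=0$ for all $s$, hence $A(a)=A(b)=0$. So
\[
L''(0)=-\frac1\sigma\int_a^b\Big(\langle\dot X,\dot X\rangle_g+\langle R(X,\dot c)X,\dot c\rangle_g\Big)dt .
\]
The pair symmetry of $R$ gives $\langle R(X,\dot c)X,\dot c\rangle_g=-\langle R(X,\dot c)\dot c,X\rangle_g$. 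Substituting, the integrand becomes $\langle\dot X,\dot X\rangle_g-\langle R(X,\dot c)\dot c,X\rangle_g$, which matches \eqref{eqh}. Hence $L''(0)=I(X,X)/|\dot c|_g$.

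\textbf{Main obstacle.} The only delicate step is bookkeeping of signs. The minus sign comes from $\sqrt{-\langle\cdot,\cdot\rangle}$, and the curvature sign depends on the paper's convention for $R$. I will check these carefully against the Jacobi equation \eqref{eqar}; the other steps are routine.
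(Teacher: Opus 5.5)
Your proof is correct and follows essentially the same route as the paper. Both differentiate under the integral sign and integrate by parts using $\ddot c=0$ and properness. Both then commute $\nabla_s\nabla_t$ via the curvature identity, dropping the resulting boundary term and the term $\langle\dot X,\dot c\rangle_g^2/|\dot c|_g^3$.

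Two minor slips, neither affecting the result. That last term enters with a minus sign, which is irrelevant since it vanishes. The identity $\langle R(X,\dot c)X,\dot c\rangle_g=-\langle R(X,\dot c)\dot c,X\rangle_g$ comes from skew-symmetry in the last two slots, not from pair symmetry.
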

\begin{proof}
    Since $c$ is timelike, we can differentiate under the integral sign and obtain
    \[
        \frac{d}{ds} \ell_g(c_s)= \int_a^b \frac{-\langle \dot c_s(t),\frac{D}{\partial s}\dot c_s(t)\rangle_g}{|\dot c_s(t)|_g}\, dt
        =
        \int_a^b \frac{-\langle \dot c_s(t),\frac{D}{\partial t}\frac{\partial}{\partial s} c_s(t)\rangle_g}{|\dot c_s(t)|_g}\, dt.
    \]
    For $s=0$, integrating by parts, noticing $|\dot c|_g$ is constant, that $c$ is geodesic and that $X(a)=X(b)=0$, we get the first part of the lemma. 
    Moreover, 
    \begin{align*}
        \frac{d^2}{ds^2}\Big|_{s=0} \ell_g(c_s)
        =
        \int_a^b \frac{-\big\langle \dot c(t),\dot X(t)\big\rangle_g^2}{|\dot c(t)|^3_g}\,  
        -
        \frac{\langle \dot X(t),\dot X(t)\rangle_g}{|\dot c(t)|_g}
        - 
        \frac{\Big\langle \dot c(t),\frac{D}{\partial s}\Big|_{s=0}\frac{D}{\partial t}\frac{\partial}{\partial s} c_s(t)\Big\rangle_g}{|\dot c(t)|_g}\, dt.
    \end{align*}
    The integrand in the first integral is $0$, as follows easily by definition of $V^\perp(c)$. Moreover,
    \[
        \frac{D}{\partial s}\Big|_{s=0}\frac{D}{\partial t}\frac{\partial}{\partial s} c_s(t)
        =
        \frac{D}{\partial t}\frac{D}{\partial s}\Big|_{s=0}\frac{\partial}{\partial s} c_s(t)+R\Big(\frac{\partial c_s}{\partial s}\Big|_{s=0}(t),\frac{\partial c}{\partial t}(t)\Big)\frac{\partial c_s}{\partial s}\Big|_{s=0}(t).
    \]
    The scalar product of $\dot c$ with the first term on the right-hand side integrates to zero since $c$ is a geodesic and $c_s$ is a  proper variation. Thus, using the symmetries of the curvature tensor, we obtain
    \begin{align*}
        \frac{d^2}{ds^2}\Big|_{s=0} \ell_g(c_s)
        =
        -\frac {1}{|\dot c|_g}
        \int_a^b \langle \dot X(t),\dot X(t)\rangle_g
        - \langle R(X,\dot c)\dot c,X\rangle_g\, dt.
    \end{align*}
    This proves the lemma.
\end{proof}

For a maximizing geodesic, it follows that the Lorentzian index form must be negative semidefinite:

\begin{corollary}[Lorentzian index form and maximality]\label{proc}
    Let $c:[a,b]\to M$ be a future directed timelike geodesic, and suppose that the Lorentzian index form is not negative semidefinite. Then $c$ is not maximizing.
\end{corollary}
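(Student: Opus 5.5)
The plan is to deduce Corollary \ref{proc} directly from Synge's second variation formula proved just above. Assume the Lorentzian index form $I$ on $V^\perp(c)$ is not negative semidefinite. Then there is $X\in V^\perp(c)$ with $I(X,X)>0$. Here $X$ is only piecewise smooth, but the formula was stated for smooth variations. So I will first observe that the second variation formula also holds for piecewise smooth proper variations. To see this, split $[a,b]$ at the break points $a_i$ of $X$ and apply the same computation on each $[a_i,a_{i+1}]$. The boundary terms from the integration by parts in the first-variation step are of the form $\langle \dot c,X\rangle_g$ at interior break points. They cancel across breaks, and they vanish anyway, because $\dot c$ is smooth and $X\perp\dot c$. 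The same holds for the boundary terms $\langle\dot c,\frac{D}{\partial s}\frac{\partial}{\partial s}c_s\rangle_g$ in the second-variation computation: they telescope, and they vanish at $a,b$ because the variation is proper. Alternatively, I could approximate $X$ in $H^1$ by smooth fields in $V^\perp(c)$. $I$ is continuous in the $H^1$ topology, so strict positivity of $I(X,X)$ is preserved. This lets me take $X$ smooth from the outset, which is the cleaner route and the one I would use.

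With $X$ smooth and orthogonal to $\dot c$, vanishing at the endpoints, I define the proper variation $c_s(t):=\exp_{c(t)}(sX(t))$ for $|s|$ small. This is defined and smooth, since $c([a,b])$ is compact and $\exp$ is defined on a neighbourhood of the zero section. It is proper because $X(a)=X(b)=0$, and its variational field at $s=0$ is $X$. Since $c$ is timelike and $[a,b]$ is compact, $\dot c_s$ stays timelike and future directed for $|s|$ small, so each $c_s$ is an admissible future directed causal curve from $c(a)$ to $c(b)$.

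By the lemma, $\frac{d}{ds}|_{s=0}\ell_g(c_s)=0$ and $\frac{d^2}{ds^2}|_{s=0}\ell_g(c_s)=I(X,X)/|\dot c|_g>0$. Taylor expansion then gives $\ell_g(c_s)>\ell_g(c)$ for small $s\neq0$. Hence $d(c(a),c(b))\ge \ell_g(c_s)>\ell_g(c)$, and $c$ is not maximizing.

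The only genuine obstacle is the regularity issue in the first step: passing from piecewise smooth $X$ to a smooth variation. I expect the density argument to be routine. For example, mollify the coefficients of $X$ in a parallel orthonormal frame along $c$. Then correct the $\dot c$-component to restore orthogonality; this is possible since $c$ is timelike, so one can project onto $\dot c^\perp$ smoothly. Finally multiply by cutoffs near the endpoints. The index form converges under this approximation because it involves only $X$ and $\dot X$ in $L^2$.
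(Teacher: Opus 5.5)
Your proposal is correct and follows the paper's route: the paper's proof is the one-line observation that a proper variation with variational field $X\in V^\perp(c)$ and $I(X,X)>0$ yields, by Synge's second variation formula, strictly longer nearby future directed timelike curves. Your reduction to a smooth $X$ (via $H^1$-density in $V^\perp(c)$, or by checking that the boundary terms telescope for piecewise smooth variations) fills in a detail the paper leaves implicit, since its second variation lemma is stated for smooth variations while the corollary's proof invokes a piecewise smooth one.
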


\begin{proof}
    If $I(X,X)>0$, it suffices to pick a piecewise smooth and proper variation $c_s$ of $c$ with variational vector field $X$.
\end{proof}
The converse result is not true; think of the Lorentzian cylinder $\R\times S^1$.

We now investigate the relationship between conjugate points and the Lorentzian index form. We first show that if $I$ is not negative semidefinite, then there exists a conjugate point along $c$.

\begin{proposition}[Lorentzian index form and conjugate points I, \cite{Ehrlich} Proposition 10.21 and Theorem 10.22] \label{prodd}
    Let $c:[a,b]\to M$ be a future directed timelike geodesic, and suppose that $I$ is not negative semidefinite. Then there is $t\in (a,b)$ such that $c(t)$ is conjugate to $c(a)$ along $c$.
\end{proposition}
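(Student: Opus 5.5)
We argue by contraposition: suppose no point $c(t)$, $t\in(a,b]$, is conjugate to $c(a)$ along $c$. We then show that $I(X,X)\le 0$ for every $X\in V^\perp(c)$, i.e.\ that $I$ is negative semidefinite. Finally we rule out a first conjugate point exactly at $t=b$. This is the Lorentzian version of the classical Riemannian index lemma. Timelikeness of $c$ is essential: the orthogonal complement $\dot c^\perp$ is spacelike, so the index form restricted to $V^\perp(c)$ behaves like a (sign-reversed) Riemannian index form.

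\textbf{Step 1: a Jacobi frame.} Choose a $g$-orthonormal basis $w_2,\dots,w_n$ of $\dot c(a)^\perp$. Let $J_i$ be the Jacobi fields with $J_i(a)=0$ and $\dot J_i(a)=w_i$. Each $J_i$ is orthogonal to $\dot c$, since $\langle J_i,\dot c\rangle_g$ is affine with vanishing value and derivative at $a$. If there is no conjugate point on $(a,b]$, the fields $J_i(t)$ are linearly independent for every $t\in(a,b]$. Otherwise a linear combination vanishing at some $t$ would be a nonzero Jacobi field vanishing at $a$ and $t$. Hence they form a basis of $\dot c(t)^\perp$ for $t\in(a,b]$. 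Any $X\in V^\perp(c)$ can therefore be written as $X=\sum_i f_iJ_i$ with piecewise smooth $f_i$ on $(a,b]$. Near $t=a$ we use $J_i(t)=(t-a)\tilde J_i(t)$ with $\tilde J_i(a)=w_i$; since $X(a)=0$, this shows that the $f_i$ extend smoothly (piecewise) to $a$.

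\textbf{Step 2: the index identity.} Set $A:=\sum_i f_i' J_i$ and $B:=\sum_i f_i\dot J_i$, so that $\dot X=A+B$. Using the Jacobi equation and the symmetry $\langle \dot J_i,J_j\rangle_g=\langle J_i,\dot J_j\rangle_g$ (Lemma \ref{ld}, with the constant equal to $0$ because all $J_i$ vanish at $a$), the standard computation gives
\[
-\langle \dot X,\dot X\rangle_g+\langle R(X,\dot c)\dot c,X\rangle_g=-\langle A,A\rangle_g-\frac{d}{dt}\langle X,B\rangle_g .
\]
Integrating over $[a,b]$, the boundary terms cancel: $\langle X,B\rangle_g$ is continuous across the break points and vanishes at $a$ and $b$ because $X(a)=X(b)=0$. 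We obtain
\[
I(X,X)=-\int_a^b\langle A,A\rangle_g\,dt\le 0,
\]
since $A\perp\dot c$ is spacelike. This contradicts the hypothesis that $I$ is not negative semidefinite, so there is a conjugate point at some $t\in(a,b]$.

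\textbf{Step 3: excluding $t=b$.} The main obstacle is to show that a conjugate point lies strictly inside $(a,b)$ rather than only at $b$. I would handle this by perturbation. Suppose the only conjugate points on $(a,b]$ are at $b$, and pick $X\in V^\perp(c)$ with $I(X,X)>0$. Reparametrize $X$ onto $[a,b-\delta]$, i.e.\ set $X_\delta(t):=X\bigl(a+(t-a)\tfrac{b-a}{b-a-\delta}\bigr)$ on $[a,b-\delta]$ and $X_\delta:=0$ on $[b-\delta,b]$. This field is orthogonal to $\dot c$ because $\dot c$ is parallel and $\dot c^\perp$ is preserved under parallel transport along $c$; if needed, one writes $X$ in a parallel frame orthogonal to $\dot c$ and rescales the coefficient functions instead. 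By continuity of the integrand in \eqref{eqh}, we have $I_{b-\delta}(X_\delta,X_\delta)\to I(X,X)>0$ as $\delta\to0$. Hence the index form on $[a,b-\delta]$ is not negative semidefinite for small $\delta$. Steps 1--2, applied on $[a,b-\delta]$, then produce a conjugate point in $(a,b-\delta]\subset(a,b)$, which is the desired contradiction.
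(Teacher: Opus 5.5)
Your proof is correct, but it takes a different route from the paper.

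\textbf{The paper's route.} It argues through maximality. Assuming no conjugate points, it lifts a proper variation $c_s$ with variational field $X$ through $\exp_{c(a)}$, which is a local diffeomorphism along the segment $t\mapsto(t-a)\dot c(a)$. It then uses the Gauss lemma to get $\ell_g(c_s)\le\ell_g(c)$ for small $s$. Synge's second variation formula then forces $I(X,X)\le 0$.

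\textbf{Your route.} You work purely infinitesimally, with the Jacobi-frame decomposition $X=\sum_i f_iJ_i$ and the identity $I(X,X)=-\int_a^b\langle A,A\rangle_g\,dt$. This is the Lorentzian version of the classical index lemma.

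\textbf{What each buys.} Yours is self-contained ODE and linear algebra. It treats piecewise smooth $X$ directly, needs neither the second variation formula nor the construction of actual nearby curves, and identifies the equality case ($A\equiv 0$). The paper's route gives the stronger geometric information that $c$ locally maximizes length among nearby timelike curves with the same endpoints.

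\textbf{The endpoint.} The paper's argument also uses non-conjugacy at $t=b$, to conclude $\gamma_s(b)=(b-a)\dot c(a)$. As written it therefore only yields a conjugate point in $(a,b]$. Your Step 3 is exactly what upgrades this to the open interval $(a,b)$ in the statement, and it combines equally well with the paper's argument.

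\textbf{A small fix in Step 3.} Make the parallel-frame version the actual definition: $X_\delta(t)=\sum_i x_i(\sigma_\delta(t))P_i(t)$ with $\sigma_\delta(t)=a+(t-a)\frac{b-a}{b-a-\delta}$. The literal expression $X(\sigma_\delta(t))$ lies in $T_{c(\sigma_\delta(t))}M$ rather than $T_{c(t)}M$, so it is not a field along $c$.
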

\begin{proof}
    Let $I(X,X)>0$ for some $X\in V^\perp(c)$ and let $c_s$ be a smooth proper variation of $c$ with variational vector field $X$. Suppose $c$ had no conjugate point. To obtain a contradiction, in view of Lemma \ref{synge}, it suffices to show that $\ell_g(c_s)\leq \ell_g(c)$.
    
    Since $c$ has non conjugate points, $\exp_{c(a)}$ is a local diffeomorphism around $t\dot c(a)$ for every $t\in [0,b-a]$ by \eqref{eq: Jac. repr.}. Covering the line $(t-a)\dot c(a)$, $t\in [a,b]$, by finitely many open neighbourhoods which are mapped diffeomorphically to a neighbourhood of $c(t)$ via $\exp_{c(a)}$, it is easy to construct a smooth family of curves $\gamma_{s}(t)$ in $T_{c(a)}M$ very close to $(t-a)\dot c(a)$ such that $c_s(t)=\exp_{c(a)}(\gamma_s(t))$ (for $s$ sufficiently small). Since $\exp_{c(a)}$ is a diffeomorphism around $(b-a)\dot c(a)$, we must have $\gamma_s(b)=(b-a)\dot c(a)$. It then follows from the Gauss lemma that $\ell_g(c_s)\leq \ell_g(c)$ for sufficiently small $s$: 
    
    Indeed, observe first that the timelike character of $c$ guarantees that $c_s$ is future directed timelike for small $s$. Since $c_s(t)=\exp_{c(a)}(\gamma_s(t))$, it is well-known that $\gamma_s(t)=\exp_{c(a)}^{-1}(c_s(t))\in \op{int}(\C_{c(a)})$ for all $t\in (a,b]$ (\cite{Baer}, Lemma 2.11). Consequently, we may write
    \[
        \gamma_s(t) = r_s(t) v_s(t),\quad t\in (a,b],
    \]
    where $r_s(t)\in (0,\infty)$ and $v_s(t)\in \op{int}(\C_{c(a)})$ with $|v_s(t)|_g=1$ are two piecewise smooth curves (this idea is taken from \cite{DoCarmo}, Chapter 3, Proposition 3.6). Setting $f_s(r,t):= \exp_{c(a)}(r v_s(t))$, it follows that $c_s(t)= f_s(r_s(t),t)$, and thus
    \[
        \dot c_s(t) = \frac{\partial f_s}{\partial r}(r_s(t),t) \dot r_s(t) +\frac{\partial f_s}{\partial t}(r_s(t),t).
    \]
    The Gauss lemma (\cite{Baer}, Proposition 2.9) implies for all $r$ and $t$
    \begin{align*}
        g\Big(\frac{\partial f_s}{\partial r},\frac{\partial f_s}{\partial t}\Big) 
        &= g\big(d_{rv_s(t)}\exp_{c(a)}(v_s(t)),d_{rv_s(t)}\exp_{c(a)}(r\dot v_s(t))\big) 
        \\[5pt]
        &= g(v_s(t), r\dot v_s(t)) 
        \\[5pt]
        &=  \frac r2 \frac{d}{dt} g(v_s(t),v_s(t))=0.
    \end{align*}
    Furthermore, $g(\frac{\partial f_s}{\partial r},\frac{\partial f_s}{\partial r})=-1$, while $g(\frac{\partial f_s}{\partial t},\frac{\partial f_s}{\partial t})\geq 0$ since $\frac{\partial f_s}{\partial t}$ is orthogonal to the timelike vector $\frac{\partial f_s}{\partial r}$ and therefore spacelike (or zero). Thus
    \[
        g(\dot c_s(t),\dot c_s(t)) \geq -\dot r_s(t)^2 \quad \forall t\in (a,b]
    \]
    Assuming for the moment that $\dot r_s(t)\ge0$, we obtain
    \[
        \ell_g(c_s) = \int_a^b |\dot c_s(t)|_g \, dt \leq \int_a^b \dot r_s(t)\, dt = r_s(b) = (b-a) |\dot c(a)|_g = \ell_g(c),
    \]
    which would conclude the proof. It therefore remains to show that $\dot r_s(t)\ge0$. To this end, another application of the Gauss lemma yields
    \begin{align*}
         g(\dot \gamma_s(t),\gamma_s(t))&= g(d_{\gamma_s(t)}\exp_{c(a)}(\dot \gamma_s(t)),d_{\gamma_s(t)}\exp_{c(a)}(\gamma_s(t)))
         \\[5pt]
         &=g(\dot c_s(t),d_{\gamma_s(t)}\exp_{c(a)}(\gamma_s(t))).
    \end{align*}
    The right-hand side is strictly negative because both $\dot c_s(t)$ and $d_{\gamma_s(t)}\exp_{c(a)}(\gamma_s(t))$ are future-directed timelike vectors. On the other hand,
    \[
        0 > g(\dot \gamma_s(t),\gamma_s(t)) = -\dot r_s(t)r_s(t),
    \]
    implying $\dot r_s(t)>0$ as required.
\end{proof}

The following proposition states the converse result. Note that the first part also allows for causal geodesics.

\begin{proposition}[Lorentzian index form and conjugate points II, \cite{Ehrlich} Proposition 10.12]\label{prod}
    Let $c:[a,b]\to M$ be a future directed causal geodesic that contains a conjugate point in $(a,b)$. Then $I$ is not negative semidefinite, i.e.\ there exists $X\in V^\perp(c)$ with $I(X,X)>0$. In particular, if $c$ is timelike, then $c$ is not maximizing.
\end{proposition}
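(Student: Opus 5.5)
The plan is the classical Riemannian argument: build a piecewise smooth vector field out of a Jacobi field that vanishes at both ends of a segment, and modify it near the interior conjugate point so that the index form becomes strictly positive.

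Suppose $c(t_0)$, $t_0\in(a,b)$, is conjugate to $c(a)$ along $c$. Let $J\neq 0$ be a Jacobi field with $J(a)=J(t_0)=0$.

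\textbf{Step 1: orthogonality.} Since $\langle J,\dot c\rangle_g$ is affine linear and vanishes at $a$ and $t_0$, it vanishes identically. Hence the broken field $V$, equal to $J$ on $[a,t_0]$ and to $0$ on $[t_0,b]$, lies in $V^\perp(c)$. By the second expression in \eqref{eqh}, $I(V,V)=0$, because $V(t_0)=0$ kills the jump term and $J$ solves the Jacobi equation.

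\textbf{Step 2: the jump at $t_0$.} The jump of $\dot V$ at $t_0$ is $-\dot J(t_0)$. This vector is nonzero, since otherwise $J\equiv 0$. It is also orthogonal to $\dot c$, by differentiating $\langle J,\dot c\rangle_g\equiv 0$. Moreover, $\dot J(t_0)$ is not parallel to $\dot c$: in the null case, $\dot J(t_0)=\mu\dot c$ would force $J(t)=\mu(t-t_0)\dot c$, contradicting $J(a)=0$, exactly as in the remark after Lemma \ref{lb}. Therefore $\dot J(t_0)$ is spacelike. Choose $W\in V^\perp(c)$ smooth, supported in a small neighbourhood of $t_0$, with $W(t_0)=-\dot J(t_0)$. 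This is possible in the null case too: take $W=\psi(t)P(t)$, where $P$ is the parallel transport of $-\dot J(t_0)$, which stays orthogonal to $\dot c$.

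\textbf{Step 3: perturbation.} Compute
\[
I(V,W)=\langle \dot V(t_0^+)-\dot V(t_0^-),W(t_0)\rangle_g=\langle -\dot J(t_0),-\dot J(t_0)\rangle_g=|\dot J(t_0)|_g^2>0 .
\]
Then
\[
I(V+\eta W,V+\eta W)=2\eta\, I(V,W)+\eta^2 I(W,W)>0
\]
for small $\eta>0$. So $I$ is not negative semidefinite.

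\textbf{Step 4: timelike case.} If $c$ is timelike, Corollary \ref{proc} shows that $c$ is not maximizing.

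The main obstacle is the null case in Step 2, namely ensuring that $\dot J(t_0)$ is spacelike rather than zero or null. The remark after Lemma \ref{lb} handles this. It is also necessary to check that the sign convention in \eqref{eqh} gives jump terms with the sign used above.
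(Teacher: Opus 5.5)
Your proposal is correct and follows essentially the same route as the paper: the paper's $X_\ep$ is exactly your $V+\eta W$, with $W=\psi\cdot(\text{parallel transport of } -\dot J(t_0))$. Like you, it obtains $I=2\ep\langle \dot J(t_0),\dot J(t_0)\rangle_g+O(\ep^2)$ and excludes $\dot J(t_0)$ being zero or null via $J=\mu(t-t_0)\dot c$. Your use of bilinearity, with $I(V,V)=0$ and the jump formula for $I(V,W)$, is only a slightly tidier bookkeeping of the same computation.
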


\begin{proof}
    Our proof follows Theorem 10.12 in \cite{Ehrlich} (see also \cite{Klingenberg}), with suitable modifications to treat the null case.
    
    Let $t_0\in (a,b)$ be such that $c(t_0)$ is conjugate to $c(a)$ along $c$, let $J$ be a corresponding Jacobi field, and let $\psi:[a,b]\to [0,1]$ be smooth with $\psi=0$ near $a$ and $b$ and $\psi=1$ near $t_0$. Define $\tilde X$ to be the parallel vector field along $c$ with $\tilde X(t_0)=-\dot J(t_0)$, and set $X:=\psi\tilde X$. Finally, for $\ep>0$, define 
    \begin{align*}
        X_\ep:[a,b]\to TM,\ X_\ep(t):=
        \begin{cases}
            J(t)+\ep X(t),\ &\text{if } a\leq t\leq t_0,
            \\
            \ep X(t),\ &\text{if } t_0\leq t\leq b.
        \end{cases}
    \end{align*}
    Obviously, $X_\ep$ is piecewise smooth with $X_\ep(a)=X_\ep(b)=0$. Moreover, since $J\in V^\perp(c)$, we have $\langle J, \dot c\rangle_g\equiv 0$ as well as $\langle \dot J,\dot c\rangle_g\equiv 0$. Since $\tilde X$ is a parallel along $c$ with $\tilde X(t_0)=-\dot J(t_0)$, it follows that also $\langle \tilde X, \dot c\rangle_g\equiv 0$, hence $X_\ep\in V^\perp(c)$. 

    We then compute
    \begin{align*}
        I(X_\ep,X_\ep) 
        &= \langle -\dot J(t_0),-\ep \dot J(t_0)\rangle_g 
        +
        \int_a^{t_0} \langle \ep \ddot X+\ep R(X,\dot c)\dot c,J+\ep X\rangle_g\, dt+O(\ep^2)
        \\[5pt]
        & =\ep \langle \dot J(t_0),\dot J(t_0)\rangle_g +\ep \int_a^{t_0} \langle \ddot X+ R(X,\dot c)\dot c,J\rangle_g\, dt+O(\ep^2).
    \end{align*}
    Integrating by parts and using that $J$ is a Jacobi field vanishing at $a$ and $t_0$, we get that the second term is equal to $\ep \langle \dot J(t_0),\dot J(t_0)\rangle_g$, so we end up with
    \[
        I(X_\ep,X_\ep) 
        =
        2\ep \langle \dot J(t_0),\dot J(t_0)\rangle_g+O(\ep^2).
    \]
    Now note that $\dot J(t_0)$ is orthogonal to $\dot c(t_0)$. However, if $\dot J(t_0)$ was a multiple of $\dot c(t_0)$, then $J(t)$ would be equal to $\mu(t-t_0)\dot c(t)$ ($\mu\in \R$), and, in particular, $J\equiv 0$ (if $\mu=0$) or $J(a)\neq 0$ (if $\mu\neq 0$); both is impossible. Thus, $\dot J(t_0)$ is orthogonal but not a multiple of $\dot c(t_0)$. Hence, $\dot J(t_0)$ is spacelike and thus $\langle \dot J(t_0),\dot J(t_0)\rangle_g>0$.
    It is then obvious that $I(X_\ep,X_\ep)>0$ for small $\ep$.

    The fact that $c$, if timelike, is not maximizing, follows from Corollary \ref{proc}.
\end{proof}

The following theorem extends the second part of Proposition \ref{prod} to future directed causal geodesics. The proof is more difficult and the computations become more tedious. We omit it here, but refer to \cite{Baer}. Note that the direct argument using the Lorentzian index form fails since $c$ is not longer assumed to be timelike.

\begin{theorem}[Conjugate points and maximality for null geodesics]\label{thmh}
    Let $c:[a,b]\to M$ be a future directed causal geodesic that contains a conjugate point in $(a,b)$. Then $c$ is not maximizing.
\end{theorem}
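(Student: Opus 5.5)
The plan is to reduce the null case to a \emph{pointwise} second-order computation. A null geodesic has length $0$, so here ``not maximizing'' means $c(b)\in I^+(c(a))$, i.e.\ $d(c(a),c(b))>0$. If $c$ is timelike, the statement is already contained in Proposition \ref{prod}, so assume $c$ is null. We will exhibit a proper variation $c_s$ of $c$ such that, for small $s\neq 0$, every curve $c_s$ is future directed timelike. Integrated length estimates (as in Synge's formula) are useless here, because $|\dot c_s|_g$ is not differentiable at $s=0$. Instead we control the function $E_s(t):=g(\dot c_s(t),\dot c_s(t))$ directly.

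\emph{Step 1 (the positive direction).} By Proposition \ref{prod}, whose first part is valid for causal, in particular null, geodesics, there is $X\in V^\perp(c)$ with $I(X,X)>0$. Denote by
\[
F(t):=\langle \dot X,\dot X\rangle_g-\langle R(X,\dot c)\dot c,X\rangle_g
\]
the integrand in \eqref{eqh}, so that $\int_a^b F\,dt=-I(X,X)<0$. Let $W$ be a parallel null field along $c$ with $\langle W,\dot c\rangle_g\equiv -1$. Define
\[
\varphi(t):=\int_a^t \Big(F(\tau)+\frac{I(X,X)}{b-a}\Big)\,d\tau .
\]
Then $\varphi$ is continuous and piecewise smooth, and $\varphi(a)=\varphi(b)=0$.

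\emph{Step 2 (the variation and its second derivative).} Set
\[
c_s(t):=\exp_{c(t)}\big(sX(t)+\tfrac{s^2}{2}\varphi(t)W(t)\big).
\]
This is a proper variation, smooth in $s$ and piecewise smooth in $t$ with the same breakpoints as $X$. Its variational field is $X$, and the transversal acceleration $\frac{D}{\partial s}\partial_s c_s|_{s=0}$ equals $\varphi W$. Now compute, on each smooth piece:
\begin{enumerate}[(i)]
\item At $s=0$ we have $E_0=0$, since $c$ is null.
\item $\partial_sE_s|_{s=0}=2\langle \dot X,\dot c\rangle_g=2\frac{d}{dt}\langle X,\dot c\rangle_g=0$.
\item Commuting covariant derivatives and using the curvature identity from the proof of Synge's formula,
\[
\partial_s^2E_s|_{s=0}=2\big(\langle\dot X,\dot X\rangle_g-\langle R(X,\dot c)\dot c,X\rangle_g+\langle \tfrac{D}{dt}(\varphi W),\dot c\rangle_g\big)=2\big(F-\varphi'\big).
\]
\end{enumerate}
By the choice of $\varphi$, the last expression is the negative constant $-2I(X,X)/(b-a)$. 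This works only because $c$ is null: the potential correction term $\varphi'\langle W,\dot c\rangle_g$ is available precisely because a multiple of $\dot c$ along the curve would contribute nothing (here $\langle \dot c,\dot c\rangle_g=0$), so a \emph{transverse} null $W$ is needed to absorb $F$ pointwise.

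\emph{Step 3 (conclusion).} By Taylor's theorem, applied uniformly on each of the finitely many compact smooth pieces, we get
\[
E_s(t)\le -\frac{I(X,X)}{b-a}\,s^2<0
\]
for all $t$ and all sufficiently small $s\neq0$, including one-sided derivatives at the breakpoints. Hence $c_s$ is timelike. Since $\dot c_s$ is close to the future directed vector $\dot c$, it is also future directed. Thus $c_s$ is a future directed timelike curve from $c(a)$ to $c(b)$, so $d(c(a),c(b))>0=\ell_g(c)$, and $c$ is not maximizing.

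The main obstacle is the second-derivative identity in Step 2: getting the signs right with the paper's curvature convention, and justifying that the transversal acceleration of $s\mapsto\exp(sX+\tfrac{s^2}{2}\varphi W)$ is exactly $\varphi W$. I would verify the identity by repeating the commutation $\frac{D}{\partial s}\frac{D}{\partial t}\partial_s c_s=\frac{D}{\partial t}\frac{D}{\partial s}\partial_s c_s+R(\partial_s c_s,\partial_t c_s)\partial_s c_s$ from the proof of Synge's formula, now keeping the acceleration term instead of integrating it away. The uniformity of the Taylor remainder also needs a short compactness check at the breakpoints of $X$.
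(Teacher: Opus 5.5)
The paper does not prove this theorem. It defers to \cite{Baer}, remarking only that the index-form argument behind Corollary \ref{proc} breaks down for null geodesics. Your argument is correct and gives a self-contained substitute for that reference.

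\textbf{How your route differs from the paper's timelike argument.} The paper's timelike argument is Synge's formula plus Corollary \ref{proc}, and it differentiates the length $\ell_g(c_s)$. Since $|\dot c_s|_g$ is not differentiable at $s=0$ when $c$ is null, you instead differentiate the smooth quantity $E_s(t)=g(\dot c_s,\dot c_s)$ pointwise. You then spend the second-order freedom of the variation, the transverse acceleration $\varphi W$, to turn the integral inequality $\int_a^b F\,dt=-I(X,X)<0$ into the pointwise identity $F-\varphi'\equiv -I(X,X)/(b-a)$.

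\textbf{The step you flagged.} I checked it with the paper's curvature convention. One gets
$\partial_s^2E_s|_{s=0}=2\big(\langle\dot X,\dot X\rangle_g+\langle R(X,\dot c)X,\dot c\rangle_g+\langle\dot A,\dot c\rangle_g\big)$ with $A=\frac{D}{\partial s}\partial_s c_s|_{s=0}$. The middle term equals $-\langle R(X,\dot c)\dot c,X\rangle_g$. Moreover $A=\varphi W$, because in normal coordinates at $c(t)$ the curve $s\mapsto sX+\frac{s^2}{2}\varphi W$ has coordinate acceleration $\varphi W$ and the Christoffel symbols vanish there. Since $W$ is parallel, $\langle\dot A,\dot c\rangle_g=-\varphi'$, so your identity holds.

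\textbf{What this buys.} You only need the first, causal, half of Proposition \ref{prod} plus elementary uniform Taylor estimates on the finitely many smooth pieces. As a bonus you obtain future directed timelike curves from $c(a)$ to $c(b)$ that are $C^0$-close to $c$, i.e.\ $c(b)\in I^+(c(a))$.

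\textbf{Minor corrections.}
\begin{enumerate}[(i)]
\item Taylor's theorem gives $E_s(t)=-\frac{I(X,X)}{b-a}s^2+O(s^3)$. So the inequality you can actually state is, e.g., $E_s(t)\le-\frac{I(X,X)}{2(b-a)}s^2$ for small $s\neq0$, not the one with the full constant. This is harmless.
\item $W$ need not be null. Only $\langle W,\dot c\rangle_g\neq0$ matters, and that transversality is why multiples of $\dot c$ are useless along a null $c$.
\item Your remark that the construction ``works only because $c$ is null'' is inaccurate. With $W=\dot c/|\dot c|_g^2$ the same pointwise computation gives $|\dot c_s|_g^2\ge|\dot c|_g^2+\frac{I(X,X)}{2(b-a)}s^2$ for timelike $c$. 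So your argument treats the timelike and null cases uniformly.
\end{enumerate}
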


We conclude with the continuity of the focalization time. We recall the definition.

\begin{definition}[Focalization time]\rm
  Fix $x\in M$. The \emph{focalization time} $\lambda:\C_x\to (0,\infty]$ is defined as
    \[
        \lambda(v):=\inf\{t\geq 0\mid c_v(t) \text{ is conjugate to $x$ along } c_v\}
    \]
    We only define $\lambda$ on $\C_x$, although a definition on $T_xM$ is possible too.  
\end{definition}

Note that, by Remark \ref{hdsdoa}, due to the smoothness of the exponential function, $\lambda$ is lower semicontinuous at $v\in \C_x$ unless $\liminf_{\C_x\ni w\to v} \lambda(w)$ is finite but $c_v$ is not defined up to that point.

\begin{lemma}[Continuity of the focalization time]\label{cor2}
    $\lambda$ is everywhere upper semicontinuous. Moreover, $\lambda$ is continuous at $v$ unless $\liminf_{\C_x\ni w\to v} \lambda(w)$ is finite but $c_v$ is not defined up to that point. In particular, $\lambda$ is continuous on its domain $\lambda^{-1}(\R)$, which is open.
\end{lemma}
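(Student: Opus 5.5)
We must prove Lemma \ref{cor2}: $\lambda$ is everywhere upper semicontinuous on $\C_x$, continuous at $v$ except in the stated degenerate case, and continuous on the open set $\lambda^{-1}(\R)$. The plan is to prove upper semicontinuity with the index form (Propositions \ref{prod} and \ref{prodd}) and to get the lower bound from the remark preceding the lemma. Openness is then a direct consequence of upper semicontinuity.

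\emph{Upper semicontinuity.} Let $v\in\C_x$ with $\lambda(v)<\infty$; otherwise there is nothing to prove. Fix $\ep>0$ small enough that $c_v$ is defined on $[0,\lambda(v)+\ep]$. Conjugate points of $x$ are isolated along $c_v$, so we may also assume the conjugate point $c_v(\lambda(v))$ lies in $(0,\lambda(v)+\ep)$.

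\begin{enumerate}[(1)]
\item By Proposition \ref{prod}, which covers causal geodesics, there is $X\in V^\perp_{\lambda(v)+\ep}(c_v)$ with $I_{\lambda(v)+\ep}(X,X)>0$.
\item For $w\in\C_x$ near $v$, write $X=\sum a_iE_i(t,v)$. Transplant $X$ to $c_w$ exactly as in the proof of Theorem \ref{thmc}: keep $a_i$ for $i\geq 2$ and solve for $b_1$ by \eqref{eqi}. This gives $Y\in V^\perp_{\lambda(v)+\ep}(c_w)$.
\item By the Lipschitz estimate of Lemma \ref{lb}, $I(Y,Y)>0$ for all $w$ close to $v$. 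Its proof only uses smoothness, so no circularity arises.
\item If $w$ is timelike, Proposition \ref{prodd} gives a conjugate point on $(0,\lambda(v)+\ep)$, so $\lambda(w)<\lambda(v)+\ep$.
\end{enumerate}

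The null case of step (4) is the main obstacle, since Proposition \ref{prodd} was proved only for timelike geodesics. I would try two routes:
\begin{itemize}
\item Approximate the null $w$ by timelike $w_k\to w$, each with a conjugate point in $(0,\lambda(v)+\ep)$. Conjugacy is a closed condition, since $\det d\exp_x$ vanishing is closed, and conjugate times stay bounded away from $0$ by Remark \ref{hdsdoa}. So a limit point gives a conjugate point of $c_w$ at a time $\le\lambda(v)+\ep$.
\item Alternatively, argue directly: positivity of the index form on null geodesics, restricted to the quotient bundle $\dot c^\perp/\dot c$, forces a conjugate point via the Morse index theorem for null geodesics.
\end{itemize}
I would take the first route. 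It still yields $\lambda(w)\le\lambda(v)+\ep$ after shrinking $\ep$ slightly.

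\emph{Lower bound and continuity.} Remark \ref{hdsdoa} identifies conjugate points with zeros of $\det d_{tw}\exp_x$. Now let $w_k\to v$ with $\lambda(w_k)\to L:=\liminf\lambda<\infty$, and suppose $c_v$ is defined on $[0,L]$. Then $L>0$ by Remark \ref{hdsdoa}, and $\exp_x$ is defined near $Lv$. Continuity of $\det d\exp_x$ then shows that $c_v(L)$ is conjugate, so $\lambda(v)\le L$. This is exactly the lower semicontinuity statement noted before the lemma. Combined with upper semicontinuity, it gives continuity at every $v$ outside the exceptional case.

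\emph{Openness.} If $\lambda(v)<\infty$, upper semicontinuity gives $\lambda<\infty$ on a neighbourhood of $v$, so $\lambda^{-1}(\R)$ is relatively open.

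\emph{Continuity on the domain.} At such $v$, take $\ep$ as above, so $c_v$ is defined on $[0,\lambda(v)+\ep]$. Upper semicontinuity gives $L\le\lambda(v)<\lambda(v)+\ep$, so the exceptional case cannot occur. Hence $\lambda$ is continuous on $\lambda^{-1}(\R)$.
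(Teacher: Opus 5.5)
Your proposal is correct and follows essentially the same route as the paper. It takes a positive index-form direction on $c_v$ from Proposition \ref{prod}, transplants it to $c_w$ via \eqref{eqi}, applies Proposition \ref{prodd} for timelike $w$, handles null $w$ by approximation with timelike vectors plus closedness of the critical set of $\exp_x$, and gets lower semicontinuity from Remark \ref{hdsdoa}; your appeal to isolatedness of conjugate points is unnecessary, since $0<\lambda(v)<\lambda(v)+\ep$ holds automatically once $\lambda(v)$ is attained.
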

\begin{proof}[Proof of Lemma \ref{cor2}]
    Fix $v_0\in T_xM$ future directed causal. Recall from Remark \ref{hdsdoa} that we only have to show upper semicontinuity. If $\lambda(v_0)$ is infinite, then upper semicontinuity is trivial. Otherwise, we assume that $\lambda(v_0)<\infty$. 
    
    Let $\ep>0$ be given; we have to show $\lambda(v)\leq \lambda(v_0)+\ep$ for all $v$ future directed causal and sufficiently close to $v_0$. We first claim that it is sufficient to consider the case where $v$ is timelike. This is trivial if $v_0$ is timelike as the set of future directed timelike vectors is open. Otherwise, let $v_0$ be null. Then, by assumption, there exists a neighbourhood $U$ of $v_0$ such that, for all $v\in U$ future directed timelike, we have $\lambda(v)\leq \lambda(v_0)+\ep$. We can also assume that $c_v$ is defined up to $\lambda(v_0)+\ep$ for all $v\in U$. If $v\in U$ is null, then it is the limit of a sequence $v_k\in U$ of timelike vectors, and  since $c_v$ is defined up to $\liminf_{k\to \infty} \lambda(v_k)\leq \lambda(v_0)+\ep$, it follows from the smoothness of the exponential map (resp.\ the lower semicontinuity at $v$) that 
    \[
        \lambda(v)\leq \liminf_{k\to \infty} \lambda(v_k)\leq \lambda(v_0)+\ep,
    \]
    which yields the claim.
    
    We now prove the case where $v$ is timelike. Fix any $\ep>0$ such that $c_v$ is defined on $[0,\lambda(v_0)+\ep]$ for all $v$ close to $v_0$, and denote by $I_v$ the Lorentzian index form associated to $c_{v}$ restricted to $[0,\lambda(v_0)+\ep]$. Denote by $E_i$ a orthonormal basis of $T_{x}M$ with $E_1$ future directed timelike. For $v$ close to $v_0$, let $E_i(t,v)$, $t\in [0,\lambda(v_0)+\ep]$ be the parallel transport of $E_i$ along $c_v$. 
    
    By Proposition \ref{prod}, there exists $X_0\in V^\perp(c_{v_0})$ such that $I_{v_0}(X_0,X_0)>0$. For some piecewise smooth $a_i(t)$, we may write
    \[
        X_0(t) = \sum_{i=1}^n a_i(t) E_i(t,v_0).
    \]
    For $v$ close to $v_0$, define the vector field
    \[
        X(t) = \sum_{i=1}^n b_i(t) E_i(t,v) 
    \]
    with $b_i=a_i$ for $i\geq 2$ and $b_1$ such that $X\in V^\perp(c_v)$, i.e.\ 
    \[
        b_1(t) = -\frac{1}{\langle \dot c_v(t),E_1(t,v)\rangle_g}\sum_{i=2}^n a_i(t) \langle E_i(t,v),\dot c_v(t)\rangle_g.
    \]
    Note that $X(0)=X(\lambda(v_0)+\ep)=0$ since $a_i(0)=a_i(\lambda(v_0+\ep))=0$. Also, $b_1$ converges to $a_1$ in the $C^1$-topology (on each subinterval where $X_0$ is smooth) as $v\to v_0$. It follows that $I_v(X,X)>0$ when $v$ is sufficiently close to $v_0$. Since $v$ is timelike, Proposition \ref{prodd} yields $\lambda(v)<\lambda(v_0)+\ep$, and this proves the upper semicontinuity.
\end{proof}

\subsection{Cut points and the cut time}

\begin{definition}[Cut time and cut locus]\rm
    Fix $x\in M$. The \emph{cut time} is the function $\rho:\C_x\to (0,\infty]$ defined by
    \begin{align*}
    \rho(v):=\sup\{t\geq 0\mid d(x,c_v(t))=t|v|_g\}.
    \end{align*}
    If $c_v$ only exists up to some $t>0$, then $\rho(v)\leq t$.
    The point $c_v(\rho(v))$ – if it exists – is called a \emph{cut point of} $x$. The set of cut points is denoted by $\op{Cut}(x)$.
\end{definition}

\begin{lemma}[Compactness of maximizing geodesics]\label{lm}
    Let $x\in M$ and $v_k\in \C_{x}$ such that the curves $[0,1]\ni t\mapsto c_{v_k}(t)$ are maximizing geodesics connecting $x$ to $y_k:=c_{v_k}(1)$. Suppose that $y_k\to y$. Then $(x,y)\in J^+$, and $v_k$ converges, along a subsequence, to some $v\in \C_x$. Moreover, $[0,1]\ni t\mapsto c_v(t)$ is well-defined and a maximizing geodesic connecting $x$ to $y$.
\end{lemma}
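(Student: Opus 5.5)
The plan is to reduce the statement to the standard limit curve theorem for globally hyperbolic spacetimes, combined with the continuity of $d$ and the structure of the cone $\C_x$.

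\emph{Step 1: confinement to a compact set.} Since $y_k\to y$, I would fix a point $z\in I^+(y)$. For large $k$ we then have $y_k\in I^-(z)$, so every $c_{v_k}([0,1])$ lies in the causal diamond $J^+(x)\cap J^-(z)$, which is compact by global hyperbolicity. Because $J^+$ is closed in a globally hyperbolic spacetime, $(x,y_k)\in J^+$ gives $(x,y)\in J^+$.

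\emph{Step 2: boundedness of the initial vectors.} This is where the real work lies, and I would argue by contradiction. Suppose $|v_k|_h\to\infty$ along a subsequence. Put $u_k:=v_k/|v_k|_h\to u\in\C_x$ with $|u|_h=1$, and set $T_k:=|v_k|_h$. Then $c_{u_k}(t)=c_{v_k}(t/T_k)$ for $t\in[0,T_k]$, so the geodesic $c_{u_k}$ is defined on $[0,T_k]$ and stays in the compact diamond. The geodesic flow is continuous, so for each fixed $T$ the curves $c_{u_k}|_{[0,T]}$ converge to $c_u|_{[0,T]}$ wherever the latter is defined. Hence $c_u$ is defined on $[0,\infty)$ and remains in a compact set. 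That makes $c_u$ a future inextendible causal curve imprisoned in a compact set, which contradicts strong causality, a property implied by global hyperbolicity.

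A subtlety here is that $c_u$ may fail to be defined beyond some time. I would handle this using the standard fact that a geodesic which stays in a compact set extends: its velocity stays bounded in $h$ because $|\dot c_{u_k}|_h$ is controlled along the converging flow. In that case $c_u$ is defined on all of $[0,\infty)$, and the contradiction above applies. So $(v_k)$ is bounded, and after passing to a subsequence $v_k\to v\in\C_x$, since $\C_x$ is closed.

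\emph{Step 3: the limit geodesic.} I would apply the same continuity of the flow to show that $c_v$ is defined on $[0,1]$, using that the $c_{v_k}$ stay in a compact set with bounded velocities. It then follows that $c_{v_k}\to c_v$ uniformly, so $c_v(1)=y$. Maximality comes from the continuity of $d$ on a globally hyperbolic spacetime:
\[
d(x,y)=\lim_k d(x,y_k)=\lim_k|v_k|_g=|v|_g=\ell_g(c_v|_{[0,1]}).
\]
Therefore $c_v$ attains the supremum defining $d$ and is maximizing.

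I expect Step 2 to be the main obstacle, in particular making rigorous that the limit direction is defined for all time and is imprisoned in a compact set. If that argument becomes delicate, I would fall back on the limit curve lemma (O'Neill, Ch.\ 14), which applied to the reparametrized curves yields a limit causal curve from $x$ to $y$, and then identify it with $c_u$.
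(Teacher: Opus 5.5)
Your strategy works and takes a different route from the paper's. However, the justification you give for the limit geodesics being defined (Steps 2 and 3) is wrong and needs to be replaced.

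\textbf{The flawed justification.} The \emph{standard fact} that a geodesic remaining in a compact set extends is a Riemannian fact. In Lorentzian signature $|\dot c|_g$ does not control $|\dot c|_h$, and compact Lorentzian manifolds can be geodesically incomplete (e.g.\ the Clifton--Pohl torus). Your claim that $|\dot c_{u_k}|_h$ is controlled along the converging flow is circular. Continuity of the flow only gives convergence on compact subintervals of the domain of $c_u$, so it says nothing near a possible endpoint. Step 3 relies on the same unproved velocity bound.

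\textbf{The repair.} Completeness is not needed. In Step 2, let $[0,a)$ be the maximal domain of $c_u$.
\begin{enumerate}[(i)]
\item Flow continuity on each $[0,T]$, $T<a$, gives $c_u([0,a))\subseteq K:=J^+(x)\cap J^-(z)$.
\item A geodesic that is inextendible as a geodesic has no future endpoint as a curve. Indeed, suppose $c_u(t)\to p$. Take a convex neighbourhood $W$ of $p$ and $t_1$ with $c_u([t_1,a))\subseteq W$, and let $\exp_q$ map a star-shaped open set $\tilde W_q\subseteq T_qM$ diffeomorphically onto $W$, where $q=c_u(t_1)$. A connectedness argument shows that $c_u(t)=\exp_q((t-t_1)\dot c_u(t_1))$ with $(t-t_1)\dot c_u(t_1)=(\exp_q|_{\tilde W_q})^{-1}(c_u(t))$. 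This converges as $t\to a$, so $c_u$ would extend as a geodesic if $a<\infty$, and for $a=\infty$ it would force $\dot c_u(t_1)=0$.
\item Hence $c_u$ is a future-inextendible causal curve imprisoned in $K$. This contradicts strong causality whether or not $a<\infty$.
\end{enumerate}
In Step 3, suppose $v\neq 0$ and $c_v$ had maximal domain $[0,b)$ with $b\le 1$. The same argument, now using $c_{v_k}([0,1])\subseteq K$, gives the same contradiction. So $c_v$ is defined on $[0,1]$, and your convergence $c_{v_k}(1)\to c_v(1)=y$ and the identity $\ell_g(c_v|_{[0,1]})=|v|_g=\lim_k d(x,y_k)=d(x,y)$ go through.

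\textbf{Comparison with the paper.} With this repair your proof is correct. The paper does not argue directly; it splits into three cases and cites the literature:
\begin{enumerate}[(i)]
\item for $x\ll y$, Lemma 9.6 of Beem--Ehrlich--Easley together with continuity of $d$ and a scaling argument;
\item for $x=y$, strong causality forcing $v_k\to 0$;
\item for $x\neq y$ with $d(x,y)=0$, Lemma 9.14 and the proof of Lemma 9.25 there.
\end{enumerate}
Your argument is uniform and self-contained. One mechanism, non-imprisonment of inextendible causal curves in the compact diamond, gives both the boundedness of $(v_k)$ and the existence of the limit geodesic on $[0,1]$. Continuity of the geodesic flow and of $d$ then give $c_v(1)=y$ and maximality. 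The null and degenerate cases need no separate treatment. The paper's proof is shorter, but only because it delegates exactly these compactness arguments to the references.
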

\begin{proof}
    Since $J^+$ is closed, $(x,y)\in J^+$. The case $x\ll y$ follows from Lemma 9.6 in \cite{Ehrlich}, together with the continuity of $d$ and a scaling argument. The case $x=y$ is trivial: for large $k$, strong causality implies $v_k=\exp_{x}^{-1}(y_k))\xrightarrow{k\to \infty} 0$. If $x\neq y$ and $d(x,y)=0$, the claim follows from Lemma 9.14 and the proof of Lemma 9.25 in \cite{Ehrlich}.
\end{proof}

\begin{theorem}[Characterization of cut points]\label{thme}
    Let $c:[0,a)\to M$ be a causal geodesic emerging from $x$. If $y=c(t_0)\in J^+(x)$ is the cut point of $x$ along $c$, then at least one of the following holds:
    \begin{enumerate}[(i)]
        \item $y$ is the first conjugate point of $x$ along $c$.
        \item There exists a second distinct maximizing geodesic connecting $x$ to $y$.
    \end{enumerate}
    Conversely, suppose $c$ is maximizing on ${[0,t_0]}$. If either $y=c(t_0)$ is a conjugate point of $x$ along $c$ or (ii) holds, then $c$ is not maximizing beyond $t=t_0$.
\end{theorem}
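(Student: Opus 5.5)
The forward direction goes by a limiting argument on the cut time. Set $v:=\dot c(0)$, so that $c=c_v$, $t_0=\rho(v)$ and $y=c_v(t_0)$ is defined. Pick a sequence $t_k\downarrow t_0$ with $c_v(t_k)$ defined, which is possible since $t_0<a$. By definition of $\rho$, $d(x,c_v(t_k))>t_k|v|_g$ for each $k$; in the null case this reads $d(x,c_v(t_k))>0$. Global hyperbolicity supplies maximizing geodesics from $x$ to $y_k:=c_v(t_k)$, which we parametrize on $[0,1]$ as $s\mapsto c_{w_k}(s)$ with $w_k\in\C_x$. Since $y_k\to y$, Lemma \ref{lm} lets us pass to a subsequence with $w_k\to w$, and $s\mapsto c_w(s)$, $s\in[0,1]$, is a maximizing geodesic from $x$ to $y$. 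Continuity of $d$ and maximality of $c_v$ on $[0,t_0]$ give $|w|_g=d(x,y)=t_0|v|_g$. In the null case $d(x,y)=0$, and $c_w$ is then a null maximizing geodesic.

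There are now two cases.

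\emph{Case A: $w\neq t_0v$.} Then $c_w$ and $c_v|_{[0,t_0]}$, after reparametrization, are two distinct maximizing geodesics from $x$ to $y$, which gives (ii). A reparametrized $c_v$ is not distinct from $c_v$, but $w$ not proportional to $v$ rules this out. In the null case $w$ could be a positive multiple $\mu v$ with $\mu\neq t_0$. This would force $c_v(\mu)=c_v(t_0)$, a closed null geodesic segment, which strong causality excludes.

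\emph{Case B: $w=t_0v$.} Then $t_kv$ and $w_k$ are distinct preimages under $\exp_x$ of the same points $y_k$: they are distinct because $\ell_g(c_{w_k})>\ell_g(c_v|_{[0,t_k]})$. Both sequences converge to $t_0v$, so $\exp_x$ is not injective near $t_0v$. The inverse function theorem then forces $d_{t_0v}\exp_x$ to be singular, and by Remark \ref{hdsdoa} $y$ is conjugate to $x$ along $c$. It must be the first conjugate point: an earlier conjugate point in $(0,t_0)$ would make $c|_{[0,t_0]}$ non-maximizing by Proposition \ref{prod} (timelike) or Theorem \ref{thmh} (null), contradicting $t_0=\rho(v)$ and maximality up to $t_0$. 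This gives (i).

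For the converse, the conjugate case follows at once. If $y$ is conjugate to $x$ along $c$, then $c|_{[0,t_0+\ep]}$ contains a conjugate point in its interior, so it is not maximizing by Proposition \ref{prod} or Theorem \ref{thmh}.

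If instead (ii) holds, let $\gamma$ be a second maximizing geodesic from $x$ to $y$, and suppose $c$ were maximizing on $[0,t_0+\ep]$. The concatenation of $\gamma$ with $c|_{[t_0,t_0+\ep]}$ has length $d(x,y)+d(y,c(t_0+\ep))=d(x,c(t_0+\ep))$, so it is maximizing. It has a genuine break at $y$ because the velocities differ there; if they agreed, $\gamma$ would coincide with $c$ by uniqueness of geodesics. A broken causal curve cannot be maximizing: in the timelike case the break can be rounded off near $y$ to strictly increase length, and in the null case a broken null curve can be deformed into a timelike curve (standard, \cite{ONeill} Ch.~10), giving $d>0=$ length. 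This contradicts maximality.

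The main obstacle is Case A/B in the null setting. There we must make sure the limit of the $w_k$ is genuinely distinct from $t_0v$ modulo reparametrization, and that the non-injectivity argument works when all lengths vanish and the comparison uses $d(x,y_k)>0$ rather than length. I would handle the latter by noting that $w_k$ is timelike while $t_kv$ is null, so the two are automatically distinct.
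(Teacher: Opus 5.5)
Your proposal is correct and follows the paper's route. The forward direction is the compactness argument of Lemma \ref{lm} applied to $y_k=c(t_k)$, $t_k\downarrow t_0$: the limit vector yields either a second maximizing geodesic or, through non-injectivity of $\exp_x$ near $t_0v$, a (first) conjugate point. The converse combines Proposition \ref{prod}/Theorem \ref{thmh} with the fact that a maximizing causal curve cannot have a corner, which the paper phrases as ``maximizing curves are pregeodesics''. You simply spell out details the paper delegates to \cite{Ehrlich}, such as the causality argument ruling out $w=\mu v$ with $\mu\neq t_0$ in the null case.
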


\begin{proof}[Proof of Theorem \ref{thme}]
    The implication $\Rightarrow$ is an easy consequence of the compactness result in Lemma \ref{lm} applied to $y_k=c(t_0+1/k)$ (see Theorems 9.12 and 9.15 in \cite{Ehrlich}). Conversely, suppose that $c$ is maximizing on $[0,t_0]$. The fact that (ii) implies $\gamma$ is not maximizing beyond $t_0$ follows from the fact that any maximizing curve must be a pregeodesic \cite{Minguzzi}, Theorem 2.9 (if $c$ is maximizing on $[0,t_0+\ep]$, concatenate $c_{|[t_0,t_0+\ep]}$ with a different maximizing geodesic from $x$ to $y$ to obtain a maximizing curve that is no pregeodesic).
    The fact that $\gamma$ is not maximizing beyond a conjugate point follows Theorem \ref{thmh}, whose prove we only gave in the timelike case.
\end{proof}

\begin{lemma}[Continuity of the cut time]\label{lq}
    $\rho$ is lower semicontiniuous. Moreover, $\rho$ is continuous at $v$ unless $\rho(v)$ is finite and $c_v(\rho(v))$ is not defined. 
\end{lemma}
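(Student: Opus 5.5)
The plan is to prove the two halves separately: first lower semicontinuity of $\rho$ everywhere, then upper semicontinuity at every $v$ for which $\rho(v)=\infty$ or the cut point $c_v(\rho(v))$ exists.

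\emph{Lower semicontinuity.} Fix $v_0\in\C_x$ and a sequence $v_k\to v_0$ in $\C_x$ with $\rho(v_k)\to \tau:=\liminf_{w\to v_0}\rho(w)$. Suppose for contradiction that $\tau<\rho(v_0)$, and choose $t$ with $\tau<t<\rho(v_0)$.
\begin{enumerate}[(i)]
\item Since $t<\rho(v_0)$, the geodesic $c_{v_0}$ is defined and maximizing on $[0,t]$. By the Characterization of cut points (Theorem \ref{thme}), $c_{v_0}(t)$ is not conjugate to $x$ along $c_{v_0}$.
\item Put $t_k:=\rho(v_k)$. For large $k$ we have $t_k<t$, so $c_{v_k}(t_k)$ is defined and close to $y_0:=c_{v_0}(\tau)$.
\item By Theorem \ref{thme}, each $c_{v_k}(t_k)$ is either a first conjugate point of $x$ along $c_{v_k}$ or is joined to $x$ by a second maximizing geodesic $c_{w_k}$ with $w_k\neq v_k$ and $\rho(w_k)\geq t_k$.
\item In the conjugate case, a limit of conjugate points is conjugate by the smoothness of $\exp_x$ (Remark \ref{hdsdoa}). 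Hence $c_{v_0}(\tau)$ is conjugate to $x$ along $c_{v_0}$ with $\tau<\rho(v_0)$, contradicting Theorem \ref{thme} (Theorem \ref{thmh}).
\item In the second-geodesic case, Lemma \ref{lm} lets us pass to a subsequence with $t_kw_k\to t_0w$, giving a maximizing geodesic from $x$ to $y_0$.
\item If $t_0w\neq\tau v_0$, then $y_0=c_{v_0}(\tau)$ has two distinct maximizing geodesics, so by the converse part of Theorem \ref{thme} the geodesic $c_{v_0}$ is not maximizing beyond $\tau$. This contradicts $\tau<\rho(v_0)$.
\item If $t_0w=\tau v_0$, then $\exp_x$ fails to be injective near $\tau v_0$, so $\tau v_0$ is a critical point of $\exp_x$. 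This again gives a conjugate point before $\rho(v_0)$, a contradiction.
\end{enumerate}
The degenerate case $\tau=0$ (the $v_k$ shrink, or $y_0=x$) is handled separately using strong causality, as in Lemma \ref{lm}.

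\emph{Upper semicontinuity.} Assume $\rho(v_0)<\infty$ and that $c_{v_0}(\rho(v_0))$ is defined; if $\rho(v_0)=\infty$ there is nothing to show. Take $\ep>0$ small so that $c_{v_0}$ is defined on $[0,\rho(v_0)+\ep]$. By definition of $\rho$ together with Theorem \ref{thme}, $d(x,c_{v_0}(\rho(v_0)+\ep))>(\rho(v_0)+\ep)|v_0|_g$. Both sides are continuous in $v$: $d$ is continuous on the globally hyperbolic $M$, and $\exp_x$ is smooth near $(\rho(v_0)+\ep)v_0$. So the strict inequality persists for $v$ near $v_0$, which gives $\rho(v)\leq\rho(v_0)+\ep$.

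The main obstacle is the null case in this step, since the proof of Theorem \ref{thmh} was only given in the timelike case. When $v_0$ is null, $\rho(v_0)$ can be attained with $d(x,c_{v_0}(s))=0$ for all $s$ up to $\rho(v_0)$. Beyond the cut point we need $c_{v_0}(s)\in I^+(x)$, so that $d>0=s|v_0|_g$. This holds because $c_{v_0}$ is not maximizing past $\rho(v_0)$, and a non-maximizing null geodesic joins points that are chronologically related, a standard causality fact. After that, the strict inequality survives perturbation exactly as above.
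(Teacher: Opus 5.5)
Your proof is correct. The upper semicontinuity half is the same as the paper's. Your ``main obstacle'' paragraph is superfluous: $c_{v_0}|_{[0,s]}$ is itself a causal competitor, so $d(x,c_{v_0}(s))\geq s|v_0|_g$ always. Hence $s>\rho(v_0)$ forces strict inequality directly from the definition of $\rho$, for null and timelike $v_0$ alike, with no need for Theorem~\ref{thmh} or an extra causality fact.

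For lower semicontinuity you use the same ingredients as the paper but a different decomposition.

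\begin{itemize}
\item \emph{The paper} never looks at the cut points $c_{v_k}(\rho(v_k))$. It fixes the parameter $\rho(v)-\ep$, which lies strictly beyond all cut times $\rho(v_k)\le\rho(v)-2\ep$. So $c_{v_k}$ is not maximizing up to that parameter, and any maximizing geodesic $c_{w_k}$ to $c_{v_k}(\rho(v)-\ep)$ automatically has $w_k\neq v_k$. The limit must be $w=v$, since otherwise $c_v$ would fail to be maximizing past $\rho(v)-\ep$. Non-injectivity of $\exp_x$ near $(\rho(v)-\ep)v$ then gives a conjugate point before $\rho(v)$. This is a single-case argument that needs only Lemma~\ref{lm} and the converse half of Theorem~\ref{thme}, not the characterization of cut points.
\item \emph{You} work at the cut points themselves. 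You invoke the forward half of Theorem~\ref{thme} (first conjugate point or second maximizing geodesic) to split into cases, which is the classical Riemannian proof.
\end{itemize}

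Your route costs an extra case and relies on a theorem whose own proof is essentially the same compactness argument. In return, your choice of $t\in(\tau,\rho(v_0))$ treats $\rho(v_0)<\infty$ and $\rho(v_0)=\infty$ at once, whereas the paper handles the infinite case separately. Also, the degenerate case $\tau=0$ needs no separate treatment in your scheme. Both branches would produce a singularity of $\exp_x$ at $0$: critical points $t_kv_k\to0$, or non-injectivity with $t_kv_k,t_kw_k\to 0$ by Lemma~\ref{lm}. Either contradicts $d_0\exp_x=\op{Id}$.
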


Note that the roles of upper and lower semicontinuity are reversed compared to Lemma \ref{cor2}.

\begin{proof}[Proof of Lemma \ref{lq}]
    (See \cite{Ehrlich}, Proposition 9.33.) If $\rho(v)$ is finite and $c_v(\rho(v))$ is defined, then upper semicontinuity at $v$ follows easily from the continuity of the time separation function. To show lower semicontinuity, suppose first $\rho(v)<\infty$, and assume that $v_k\to v$ but $\rho(v_k)\leq \rho(v)-2\ep$ for some $\ep>0$ and all $k\in \N$. Then there exist maximizing geodesics from $x$ to $c_{v_k}(\rho(v)-\ep)$, say $c_{w_k}(t)$, $t\in [0,\rho(v)-\ep]$. By Lemma \ref{lm}, $w_k$ converges, along a subsequence, to some $w$, and $c_w(t)$, $t\in [0,\rho(v)-\ep]$, is a maximizing geodesic from $x$ to $c_v(\rho(v)-\ep)$. Necessarily, $w=v$, otherwise there would exist two distinct maximizing geodesics from $x$ to $c_v(\rho(v)-\ep)$ and $c_v$ would fail to be maximizing beyond. Hence,
    \[
        v_k,w_k\xrightarrow{k\to \infty} v \text{ and } c_{v_k}(\rho(v)-\ep), c_{w_k}(\rho(v)-\ep)\xrightarrow{k\to \infty} c_v(\rho(v)-\ep).
    \]
    Hence, $d_{(\rho(v)-\ep)v}\exp_x$ must be singular, so that $c_v$ contains a conjugate point in $[0,\rho(v))$ and cannot be maximizing. This is a contradiction. 

    The case $\rho(v)=\infty$ is very similar. Instead of $\rho(v)-2\ep$, choose $T>0$ such that $\liminf_{k\to \infty} \rho(v_k)\leq T$, and proceed with the argument applied to $T+\ep$ instead of $\rho(v)-\ep$.
\end{proof}

\begin{lemma}\label{lr}
    Let $x\in M$ and $K\subseteq \C_x$ be a compact subset such that $c_v(\rho(v))$ is defined for all $v\in K$. Then the set 
    \[
        \{(v,w)\in K\times \C_x\mid \rho(w)=\rho(v) \text{ and } c_w(\rho(w))=c_v(\rho(v))\}
    \]
    is compact. 
\end{lemma}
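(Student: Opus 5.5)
The plan is to show the set is closed and bounded in $T_xM\times T_xM$; since $K$ is compact, boundedness in the first factor is automatic. Throughout I will use Lemma \ref{lm} (compactness of maximizing geodesics) and Lemma \ref{lq} (continuity of $\rho$ at every $v\in K$). The latter applies because $c_v(\rho(v))$ is defined for all $v\in K$; when $\rho(v)=\infty$ for some $v$, the pair condition involving $c_v(\rho(v))$ is vacuous and such $v$ may be discarded.

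Take a sequence $(v_k,w_k)$ in the set. After passing to a subsequence, $v_k\to v\in K$. By continuity of $\rho$ at $v$, $\rho(v_k)\to\rho(v)<\infty$. The points $y_k:=c_{v_k}(\rho(v_k))$ then converge to $y:=c_v(\rho(v))$, by smoothness of the exponential map on its open domain. The geodesics $t\mapsto c_{w_k}(t)$, $t\in[0,\rho(w_k)]=[0,\rho(v_k)]$, are maximizing from $x$ to $y_k$. Rescale them as $t\mapsto c_{\rho(v_k)w_k}(t)$ on $[0,1]$ and apply Lemma \ref{lm}. This gives, along a subsequence, $\rho(v_k)w_k\to u\in\C_x$, and $[0,1]\ni t\mapsto c_u(t)$ is a maximizing geodesic from $x$ to $y$.

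Since $\rho(v_k)\to\rho(v)>0$, we get $w_k\to w:=u/\rho(v)$. Hence $c_w$ is defined on $[0,\rho(v)]$, is maximizing there, and satisfies $c_w(\rho(v))=y$. This gives boundedness of the second factor, together with the limit identification $c_w(\rho(v))=c_v(\rho(v))$ and $\rho(w)\ge\rho(v)$.

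It remains to prove $\rho(w)\le\rho(v)$, so that the limit pair lies in the set. I expect this to be the main obstacle. If $w=v$ this is trivial. If $w\neq v$, then $c_v$ and $c_w$ are two distinct maximizing geodesics from $x$ to $y$ on $[0,\rho(v)]$, both with $g$-length $d(x,y)$. By the converse part of Theorem \ref{thme}, applied to $c_w$, alternative (ii) holds, so $c_w$ is not maximizing beyond $\rho(v)$, i.e.\ $\rho(w)\le\rho(v)$. Distinctness of the geodesics requires some care: when $w\neq v$ but $w$ is a positive multiple of $v$, the equality $\rho(w)=\rho(v)$ together with $|w|_g=|v|_g$ (in the timelike case) forces $w=v$. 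In general, $c_w$ and $c_v$ reaching the same point at the same parameter with distinct initial velocities means the curves are distinct as unparametrized curves, unless they lie on the same null ray. The null ray case can be excluded using $c_{\rho w}(1)=c_{\rho v}(1)$ together with the fact that a maximizing null geodesic cannot self-intersect (by causality). Combining the two inequalities gives $\rho(w)=\rho(v)$, so the set is closed and bounded, hence compact.
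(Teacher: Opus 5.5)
Your proof is correct. Up to the last step it is the paper's argument: pass to $v_k\to v\in K$, use continuity of $\rho$ at $v$ (Lemma \ref{lq}), then rescale and apply Lemma \ref{lm}. This gives $w_k\to w$ with $c_w$ maximizing on $[0,\rho(v)]$ and ending at $y$, hence $\rho(w)\ge\rho(v)$.

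The two proofs differ only in how they get $\rho(w)\le\rho(v)$.

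\textbf{The paper's route.} It invokes the lower semicontinuity of $\rho$ from Lemma \ref{lq}: $\rho(w)\le\liminf_k\rho(w_k)=\lim_k\rho(v_k)=\rho(v)$. This is a one-liner with no case distinction.

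\textbf{Your route.} You apply the converse part of Theorem \ref{thme} to $c_w$. This obliges you to check that $c_v$ and $c_w$ are geometrically distinct whenever $w\neq v$, that is, to exclude $w=\mu v$ with $\mu\neq 1$. In exchange, you use only the ``two distinct maximizers'' half of Theorem \ref{thme}, which rests on maximizing curves being pregeodesics, together with causality. The paper's proof of lower semicontinuity in Lemma \ref{lq} additionally needs non-maximality past a conjugate point (Theorem \ref{thmh}), and the paper does not prove the null case of that. So your argument is slightly more elementary, while the paper's is shorter.

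There are two small slips to fix.
\begin{enumerate}[(i)]
\item When you exclude $w=\mu v$, you invoke $\rho(w)=\rho(v)$, which is what you are trying to prove. What you actually have is $c_w(\rho(v))=c_v(\rho(v))$. If $w=\mu v$, this reads $c_v(\mu\rho(v))=c_v(\rho(v))$, which for $\mu\neq1$ gives a closed causal curve. That handles the timelike and null cases at once. Alternatively, in the timelike case, $\rho(v)|w|_g=d(x,y)=\rho(v)|v|_g$ gives $|w|_g=|v|_g$ directly.
\item Your remark about discarding $v\in K$ with $\rho(v)=\infty$ is unnecessary, and as phrased it would be wrong. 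Such $v$ are already excluded by the hypothesis that $c_v(\rho(v))$ is defined. If they were allowed, pairs $(v_k,v_k)$ with $v_k\to v$ would converge to a point outside the set, so compactness would fail rather than be preserved.
\end{enumerate}
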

\begin{proof}
    Let $v_k \in U$ and $w_k \in \mathcal{C}_x$ be sequences as in the hypothesis of the lemma.
    Passing to a subsequence, we may assume that $v_k \to v \in K$. Thanks to our assumption, $\rho$ is continuous at $v$ so that $\rho(v_k)\to \rho(v)$. A simple scaling argument together with Lemma \ref{lm} grants that $c_v:[0,\rho(v)]\to M$ is well-defined and a maximizing geodesic. Applying Lemma \ref{lm} again (but now with $y_k = c_{w_k}(\rho(w_k))$ and $y = c_v(\rho(v))$, noticing that $\rho(w_k)=\rho(v_k)\to \rho(v)$), we obtain – after possibly passing to a further subsequence – that 
    $w_k \to w$ for some $w\in \C_x$ and that $c_w:[0,\rho(v)]\to M$ is a maximizing geodesic from $x$ to $y$. In particular, $\rho(w)\geq \rho(v)$, but on the other hand the lower semicontinuity of $\rho$ gives 
    \[
        \rho(w) \leq \liminf_{k\to \infty} \rho(w_k) = \rho(v).
    \] 
    To summarize, up to a subsequence, $(v_k,w_k)\to (v,w)$ and $v,w$ satisfy the hypothesis of the lemma. This proves the claim.
\end{proof}

\begin{lemma}[Non-conjugacy implies uniform linear independency]\label{lj}
   Let $x$ and $v_0\in \C_x$ be such that $\rho(v_0)<\lambda(v_0)$ and such that $c_{v_0}(\rho(v_0))$ is defined. Then there exists $\ep>0$ such that the following holds: If $v$ future directed causal is sufficiently close to $v_0$, then for all $w\neq v$ future directed causal with $\rho(v)=\rho(w)$ and $c_v(\rho(v))=c_w(\rho(w))$, we have
   \[
        \bigl\langle\dot c_v(\rho(v)),\dot c_w(\rho(w))\bigl\rangle_g\, 
        \leq -|v|_g|w|_g-\ep.
   \]
   Moreover, there exists at least one such $w$ (if $v$ is close to $v_0$).
\end{lemma}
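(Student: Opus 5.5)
The plan is a compactness-and-contradiction argument. Both claims are stable under limits once Lemma \ref{lr} and the continuity of $\rho$ (Lemma \ref{lq}) and $\lambda$ (Lemma \ref{cor2}) near $v_0$ are available.

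Existence of $w$ comes first. By Lemma \ref{lq} and Lemma \ref{cor2}, $\rho(v)<\lambda(v)$ and $c_v(\rho(v))$ is defined for all $v\in\C_x$ close to $v_0$. So $y=c_v(\rho(v))$ is a cut point that is not conjugate to $x$ along $c_v$. Theorem \ref{thme} then provides a second maximizing geodesic from $x$ to $y$. Parametrizing it on $[0,\rho(v)]$ gives $w\neq v$ with $c_w(\rho(v))=y$. Since the two geodesics have the same length, $|w|_g=|v|_g$. It remains to show $\rho(w)=\rho(v)$.

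\begin{itemize}
\item $\rho(w)\ge\rho(v)$ holds because $c_w$ is maximizing on $[0,\rho(v)]$.
\item $\rho(w)\le\rho(v)$ follows from the converse part of Theorem \ref{thme}: $c_w$ cannot be maximizing beyond $y$, since $c_v$ is a distinct maximizing geodesic to $y$.
\end{itemize}

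Note that in the null case "maximizing" means the length $0$ equals $d$. The pregeodesic argument quoted in the proof of Theorem \ref{thme} covers this case as well.

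For the inequality, I would first observe the reverse triangle inequality for future causal vectors: $\langle \dot c_v,\dot c_w\rangle_g\le -|\dot c_v|_g|\dot c_w|_g=-|v|_g|w|_g$, with equality iff the two vectors are parallel. Parallel velocities at $y$ with equal $g$-norm would force $v=w$ when $v$ is timelike. In the null case they would force the geodesics to be reparametrizations of each other, and combined with $\rho(v)=\rho(w)$ this again gives $v=w$. So strict inequality holds pointwise, and the task is to make the gap uniform.

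Suppose it is not uniform. Then there are $v_k\to v_0$ and $w_k\neq v_k$ as in the statement with
\[
\langle\dot c_{v_k}(\rho(v_k)),\dot c_{w_k}(\rho(w_k))\rangle_g+|v_k|_g|w_k|_g\to 0 .
\]
Lemma \ref{lr}, applied on a compact neighbourhood $K$ of $v_0$, lets us pass to a subsequence with $w_k\to w$, where $\rho(w)=\rho(v_0)$ and $c_w(\rho(w))=c_{v_0}(\rho(v_0))$. In the limit equality holds, so by the previous paragraph $w=v_0$. Hence $v_k$ and $w_k$ are distinct vectors converging to the same $v_0$, with $\exp_x(\rho(v_k)v_k)=\exp_x(\rho(v_k)w_k)$ and $\rho(v_k)\to\rho(v_0)$. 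The vectors $\rho(v_k)v_k\neq\rho(v_k)w_k$ both converge to $\rho(v_0)v_0$ and have the same image under $\exp_x$. So $\exp_x$ is not injective near $\rho(v_0)v_0$, and $d_{\rho(v_0)v_0}\exp_x$ must be singular by the inverse function theorem. By Remark \ref{hdsdoa}, $c_{v_0}(\rho(v_0))$ is then conjugate to $x$, contradicting $\rho(v_0)<\lambda(v_0)$.

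I expect the main obstacle to be the null case, in two places:
\begin{itemize}
\item justifying that equality in the reverse Cauchy–Schwarz inequality forces $v=w$ when $|v|_g=0$;
\item checking that Theorem \ref{thme} and Lemma \ref{lr} apply verbatim to null $v$.
\end{itemize}
For the first, I would use that two null vectors in the same cone with $\langle\cdot,\cdot\rangle_g=0$ are proportional. Proportional initial data then give the same geodesic image, and the equal cut time pins down the scaling.
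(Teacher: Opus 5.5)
Your proposal is correct and follows essentially the paper's argument: existence comes from Theorem~\ref{thme}, and the uniform gap comes from a contradiction argument in which Lemma~\ref{lr} produces a limit pair $(v_0,w)$, with non-conjugacy of $c_{v_0}(\rho(v_0))$ (i.e.\ local injectivity of $\exp_x$ near $\rho(v_0)v_0$) ruling out the degenerate limit. The only difference is the order: you first get $w=v_0$ from the equality case of the reverse Cauchy--Schwarz inequality and then contradict non-conjugacy, whereas the paper first gets $w_0\neq v_0$ from non-conjugacy and then strict inequality, which is the same reasoning in contrapositive form (in the null case, concluding $v=w$ from parallel velocities at $y$ also uses that $c_w$ cannot return to $x$, i.e.\ causality, which the paper likewise leaves implicit).
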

\begin{proof}   
   For $v$ sufficiently close to $v_0$, Lemmas \ref{cor2} and \ref{lq} give $\rho(v)<\lambda(v)$; thus, Theorem \ref{thme} yields the existence of a second (i.e.\ not reparametrized) maximizing geodesic $c_w:[0,\rho(w)]\to M$ from $x$ to $c_v(\rho(v))$. Up to rescaling, we may assume $\rho(w)=\rho(v)$. This proves the the second statement.
   
    To prove the first part, note first that the claimed inequality always holds for $\ep=0$ by the reversed Cauchy Schwarz inequality. Now, suppose by contradiction that we find sequence $v_k\in \C_{x}\cap U$ converging to $v_0$ and $w_k$ as in the hypothesis such that
    \begin{align}
         \big\langle\dot c_{v_k}(\rho(v_k)),\dot c_{w_k}(\rho(w_k))\big\rangle_g +|v_k|_g|w_k|_g
         \xrightarrow{k\to \infty} 0. \label{eqa}
    \end{align}
    The preceding lemma gives $w_k\to w_0\in \C_x$ with $\rho(w_0)=\rho(v_0)$ and such that $c_{w_0}(\rho(w_0)))=c_{v_0}(\rho(v_0))$. Note that $w_0\neq v_0$ because $w_k\neq v_k$ and $c_{v_0}(\rho(v_0))$ is not conjugate to $x$ along $c_{v_0}$. Combining this with $\rho(w_0)=\rho(v_0)$ forces $\dot c_{w_0}(\rho(w_0))$ and $\dot c_{v_0}(\rho(v_0))$ to be linearly independent. Thus, the reverse Cauchy Schwarz inequality yields
    \begin{align}
        \big\langle g(\dot c_{v_0}(\rho(v_0)),\dot c_{w_0}(\rho(v_0))\big\rangle_g +|v_0|_g|w_0|_g<0.\label{eqp}
    \end{align}
    However, $\dot c_{v_k}(\rho(v_k))$ converges to $\dot c_{v_0}(\rho(v_0))$, and the same holds for $w_k$ and $w_0$. 
    In view of \eqref{eqa} and \eqref{eqp}, this gives a contradiction.
\end{proof}

\section{Proof of Proposition \ref{li}} \label{App: B}

The following lemma is useful for the proof of Proposition \ref{li}. Its proof is
a tedious computation.

\begin{lemma}[Semiconcavity criterion]
Let $f,g : U \to (0,\infty)$ be a smooth resp.\ semiconvex Lipschitz function defined on an open set $U \subset \mathbb{R}^n$. Suppose that $f$ is $1$-homogeneous with $f \le g$, and suppose $\rho : U'\subseteq U\to \mathbb{R}$ is a continuous function with $\rho(x)x \in U$ for $x \in U'$ and $f(tx)=g(tx)$
for all $x \in U'$ and $t \in [0,\rho(x)]$ such that $tx\in U$. Suppose that, for each $x\in U'$, there exists $y\in\partial^- g(\rho(x)x)$
with
\begin{align}
\langle y,x\rangle\geq (1+\ep)f(x). \label{eqcb}
\end{align}
Then $\rho$ is locally semiconcave on $U'$.
\end{lemma}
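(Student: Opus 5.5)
The plan is to show that at every point $x_0\in U'$ the function $\rho$ has an upper supporting paraboloid, with constants uniform on a neighbourhood of $x_0$; this is equivalent to local semiconcavity. Fix $x_0\in U'$, put $t_0:=\rho(x_0)$, and let $y\in\partial^-g(t_0x_0)$ be as in \eqref{eqcb}. For $z\in U'$ close to $x_0$ put $t:=\rho(z)$. By continuity of $\rho$, the point $tz$ is close to $t_0x_0$. The hypothesis gives $g(tz)=f(tz)=tf(z)$ and $g(t_0x_0)=t_0f(x_0)$. The semiconvexity inequality \eqref{eqao} for $g$ at $t_0x_0$ then reads
\[
t f(z)\ \ge\ t_0 f(x_0)+\langle y,tz-t_0x_0\rangle-K|tz-t_0x_0|^2 .
\]
Set $\varphi(z):=\langle y,z\rangle-f(z)$. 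This is smooth, and $\varphi(x_0)\ge \ep f(x_0)>0$. Rearranging gives the key inequality
\begin{align*}
t\,\varphi(z)\ \le\ t_0\,\varphi(x_0)+K|tz-t_0x_0|^2 .
\end{align*}

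Next I would fix the uniform constants. Take a compact neighbourhood $B$ of $x_0$ in $U'$. Continuity of $\rho$ makes $\{\rho(x)x : x\in B\}$ a compact subset of $U$. So there is a single semiconvexity constant $K$ and radius valid there, and every sub-gradient $y$ satisfies $|y|\le \op{Lip}(g)$. Also $f\ge c>0$ on $B$, so $\varphi\ge \ep c/2$ near the base point, and $f$ has bounded derivatives up to order two on $B$. With these bounds, all constants below are uniform in the base point $x_0\in B$.

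The main obstacle is that the error term $|tz-t_0x_0|^2$ is quadratic in $|z-x_0|$ only if we already know $|t-t_0|\le C|z-x_0|$, i.e.\ that $\rho$ is locally Lipschitz. I would obtain this by a bootstrap. Since $|tz-t_0x_0|\le C(|t-t_0|+|z-x_0|)$, the key inequality gives
\[
t-t_0\le C|z-x_0|+\delta|t-t_0|+C|z-x_0|^2 .
\]
Here $\delta$ can be made small because $|t-t_0|$ is small by continuity. The hypothesis holds at every point of $B$ with the same constants, so the argument can be repeated with the roles of $x_0$ and $z$ exchanged. This yields the reverse bound on $t_0-t$. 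Absorbing $\delta|t-t_0|$ into the left-hand side gives $|\rho(z)-\rho(x_0)|\le C'|z-x_0|$.

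With the Lipschitz bound in hand, the key inequality becomes
\[
\rho(z)\le \psi(z)+C''|z-x_0|^2,\qquad \psi(z):=\frac{t_0\varphi(x_0)}{\varphi(z)} .
\]
The function $\psi$ is smooth, satisfies $\psi(x_0)=\rho(x_0)$, and has second derivatives bounded uniformly on $B$. Taylor expansion of $\psi$ then gives
\[
\rho(z)\le\rho(x_0)+D\psi(x_0)[z-x_0]+C|z-x_0|^2
\]
with $C$ independent of $x_0\in B$. A continuous function with uniform upper supporting paraboloids at every point is locally semiconcave, which would conclude the proof.
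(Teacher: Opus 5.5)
Your proof is correct, but it takes a different route from the paper.

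The paper never evaluates the semiconvexity inequality at the point $\rho(x')x'$ itself. It takes the base point $\rho(x)x$ with its sub-gradient $y$ and tests the inequality at $(\rho(x)+s)x'$ on the ray through the nearby point $x'$. It chooses $s=\langle z,x'-x\rangle+2C_5|x'-x|^2$, with $z=\frac{\rho(x)}{\varepsilon f(x)}(\nabla f(x)-y)$, so that $f<g$ at that point. It then uses the hypothesis that $f=g$ on the whole segment $\{tx'\mid 0\le t\le\rho(x')\}$ to conclude $\rho(x')<\rho(x)+s$. That argument is one-sided: only the sub-gradient at the base point enters. It needs no a priori Lipschitz bound, and it produces the supporting paraboloid directly, with the Lipschitz property as a by-product.

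You instead use the identity $g(\rho(x)x)=\rho(x)f(x)$ only at the two endpoints. You therefore have to pay for the error term $|\rho(z)z-\rho(x_0)x_0|^2$. You do this with the symmetric bootstrap, using transversality at both points and uniform continuity of $\rho$ to absorb the quadratic term. You then read off the paraboloid from the smooth function $\psi=t_0\varphi(x_0)/\varphi$.

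What your route buys is generality. It is essentially an implicit function theorem for the zero set of the semiconvex function $(t,x)\mapsto g(tx)-tf(x)$ with sub-gradients transversal in $t$, and it never uses the segment condition. It also has a small technical advantage. By working with the exact quantity $\varphi(x_0)=\langle y,x_0\rangle-f(x_0)$, you avoid a sign issue in the paper's computation. There, $s\langle y,x\rangle$ is bounded below by $s(1+\varepsilon)f(x)$, which is valid only for $s\ge 0$, while $s=s'+s''$ can be negative. This is easily repaired in the paper by replacing $\varepsilon f(x)$ with $\langle y,x\rangle-f(x)$ throughout.
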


\begin{proof}
There exists a constant $C_1$ such that, for all $x\in U'$, we find
$y\in\mathbb{R}^n$ as in \eqref{eqcb} with
\[
g(x') \ge g(\rho(x)x)
 + \langle y,x'-\rho(x)x\rangle
 - C_1|x'-\rho(x)x|^2
 \quad \forall x'\in U.
\]
We may also assume that $|y|\le C_1$ since $g$ is Lipschitz.

Fix $x_0\in U'$, and let us show that $\rho$ is semiconcave in a neighbourhood of $x_0$. Let $r,s_0>0$ such that
\[
(\rho(x)+s)x' \in U
\]
for any $s\in[-s_0,s_0]$ and
$x,x'\in B_r(x_0)\Subset U'$.
Let $x,x'\in B_r(x_0)$, and let $y$ be as above.
Setting
\[
\rho := \rho(x)
\quad \text{ and } \quad
h := x'-x,
\]
we have for $s\in[-s_0,s_0]$
\begin{align*}
g((\rho+s)x')
&\ge
g(\rho x)
+\langle y,(\rho+s)x'-\rho x\rangle
-C_1|(\rho+s)x'-\rho x|^2 \\[5pt]
&=
f(\rho x)
+(\rho+s)\langle y,h\rangle
+s\langle y,x\rangle
-C_1|(\rho+s)x'-\rho x|^2 \\[5pt]
&\ge
(\rho+s)f(x)
+\rho\langle y,h\rangle
+s\varepsilon f(x)
-C_2(|h|^2+s^2),
\end{align*}
where $C_2$ depends only on $C_1$ and $|x'|\leq |x_0|+r$,
and we have used the $1$-homogeneity of $f$ and \eqref{eqcb}.
Using the $1$-homogeneity of $f$ again, we obtain form this estimate that
\begin{align*}
f((\rho+s)x') - g((\rho+s)x')
&\le
(\rho+s)\bigl(f(x')-f(x)\bigr)
-\rho\langle y,h\rangle
-s\varepsilon f(x)
+C_2(|h|^2+s^2) \\[5pt]
&=
\rho\langle \nabla f(x),h\rangle
-\rho\langle y,h\rangle
-s\varepsilon f(x)
+C_3(|h|^2+s^2),
\end{align*}
where $C_3$ depends only on $C_2$, the Lipschitz constants of $f$ and $\nabla f$ on $B_r(x_0)$, and the supremum of $\rho$ on $B_r(x_0)$.
Now set
\[
z := \frac{\rho}{\varepsilon f(x)}
\bigl(\nabla f(x)-y\bigr)
\quad \text{ and } \quad
s' := \langle z,h\rangle ,
\]
and note that $|z|\le C_4$ and $|s'|\le C_4|h|$ with $C_4$ depending only on $\rho$, $\ep$, $f$ and $|y|\leq C_1$. Set $s'':=s-s'$. We obtain
\begin{align}
f((\rho+s)x')-g((\rho+s)x')
&\le
-s''\varepsilon f(x)
+
C_3\bigl(|h|^2+s^2\bigr)  \nonumber
\\[5pt]
&\le
-s''\varepsilon f(x)
+
C_3(1+2C_4^2)|h|^2
+
2C_3(s'')^2.
\label{eq:ca}
\end{align}
Now let $r'<r$ such that $C_3(1+2C_4^2)\delta^2
>
2C_3(2C_5\delta^2)^2$ and $C_4\delta+2C_5\delta^2\le s_0$
for all $0\le\delta\le 2r'$,
where
\[
C_5
:=
\frac{C_3(1+2C_4^2)}
{\varepsilon\inf_{B_r(x_0)} f}.
\]
Now suppose that $x,x'\in B_{r'}(x_0)$ and that $y$ is as above. Define also $s'$ as above, and set $s''=2C_5|h|^2$ and $s:=s'+s''$. By definition of $r'$, we have $s\in [-s_0,s_0]$. Thus, by \eqref{eq:ca},
\[
f((\rho+s)x')-g((\rho+s)x') \leq -C_3(1+2C_4)|h|^2+2C_3(2C_5|h|^2)^2
<
0.
\]
This combines with our assumption on the function $\rho$ to yield that 
\[
\rho(x')
<
\rho(x)+s'+s''
=
\rho(x)
+
\langle z,x'-x\rangle
+
2C_5|x'-x|^2.
\]
This shows that $\rho$ is semiconcave on $B_{r'}(x_0)$, and this concludes the proof.
\end{proof}

\begin{proof}[Proof of Proposition \ref{li}]
    Since $\rho$ and $\lambda$ are $(-1)$-homogeneous, it costs no generality to assume $\rho(v_*)=1$. 

    Let $f(v):=|v|_g$, $v\in \op{int}(\C_x)$. We know that $d$ is locally semiconvex on $I^+$. Hence, also $g(v):=d(x,\exp_x(v))$ is locally semiconvex in a neighbourhood $U\subseteq \op{int}(\C_x)$ of $v_*\in T_xM$. Being $\rho$ continuous at $v_*$, we can pick an open neighbourhood $U'\subseteq U$ of $v_*$ such that $\rho(v)v\in U$ for all $v\in U'$.

    Taking into account Lemma \ref{lj}, we may also choose $U'$ even smaller to achieve that there exists $\ep>0$ such that, for any $v\in U'$, there is $w$ future directed causal with $|w|_g=|v|_g$, $c_v(\rho(v))=c_w(\rho(w))$, and such that
    \[
        \Big\langle \frac{\dot c_w(\rho(w))}{|w|_g}, \frac{\dot c_v(\rho(v))}{|v|_g}\Big \rangle_g \leq -(1+\ep).
    \]
    Note that $-\frac{\dot c_w(\rho(w))}{|w|_g}$ is a sub-gradient for $d(x,\cdot)$ at $c_v(\rho(v))=\exp_x(\rho(v)v)$ by \eqref{eqsub}. In particular (since $\rho(v)v\in U$), 
    \[
        p:= -\Big\langle \frac{\dot c_w(\rho(w))}{|w|_g},\cdot\Big\rangle \circ d_{\rho(v)v}\exp_x\in \partial^- g(\rho(v)v)
    \]
    and 
    \[
        p(v)\geq (1+\ep)|v|_g=(1+\ep)f(v).
    \]
    The preceding lemma yields that $\rho$ is locally semiconcave on $U'$.
\end{proof}

\section{Euclidean vs hyperbolic point of view}\label{App: C}

\begin{proof}[Proof of Lemma \ref{ll6}]
    Fix $x,\tilde x\in \Hy$ and consider the unique unit speed $\eta$-geodesic $c:[0,d_{\Hy}(x,\tilde x)]\to M$ from $x$ to $\tilde x$. 
    
    We first claim that the hyperbolic angle $\phi(c(t))$ never exceeds $\phi_{max}$. Indeed, let us write $c$ in the form 
    \[
        c(t):=\cosh(t)x+\sinh(t)v
    \]
    for some unit tangent vector $v\in T_x{\Hy}$.
    Let $c_1(t)$ be the first component of $c(t)$, so that $\phi(t)=\cosh^{-1}(c_1(t))$ is the hyperbolic angle of $c(t)$. 
    Since $\ddot c_1(t)=c_1(t)>0$, $c_1$ is convex, so that it attains its maximum value at $t=0$ or $t=d_{\Hy}(x,\tilde x)$. Being $\cosh$ non-decreasing, we deduce
    \begin{align}
        \phi(t) \leq \phi_{max}, \label{eqaq}
    \end{align}
    proving our claim.

    Now we write $c$ in the form
    \[
        c(t) =
        \begin{pmatrix} 
            \cosh(\phi(t)) 
            \\ 
            \sinh(\phi(t)) y(t)
        \end{pmatrix} 
        \text{ and } 
        \dot c(t) =
        \begin{pmatrix} 
            \sinh(\phi(t)) \dot \phi(t)  
            \\ 
            \cosh(\phi(t)) \dot \phi(t) y_t +\sinh(\phi(t)) \dot y(t)
        \end{pmatrix}, 
    \]
    where $\phi(t)$ is the hyperbolic angle of $c(t)$ and $y(t)\in S^{n-2}\subseteq \R^{n-1}$ is a Euclidean unit vector.

    Since $y(t)$ is orthogonal to $\dot y(t)$ w.r.t.\ the Euclidean scalar product in $\R^{n-1}$, we compute 
    \begin{gather*}
        |\dot c(t)|^2_{\eta} = \dot \phi(t)^2+\sinh^2(\phi(t)) |\dot y(t)|^2 
        \\[5pt]
        \text{and} \qquad |\dot c(t)|^2_{\op{euc}} = (\cosh^2(\phi(t))+\sinh^2(\phi(t)))\dot \phi(t)^2+\sinh^2(\phi(t)) |\dot y(t)|^2.
    \end{gather*}
    These identities combine with the estimate $\sinh \leq \cosh$ and \eqref{eqaq} to yield $|\dot c(t)|_{euc} \leq 2\cosh(\phi_{max})|\dot c(t)|_{\eta}$. We can therefore integrate and conclude
    \[
        |x-\tilde x|_{\op{euc}} 
        \leq 2\cosh(\phi_{max}) \int_0^{d_{\Hy}(x,\tilde x)} |\dot c(t)|_{\eta}\, dt
        = 2\cosh(\phi_{max}) d_{\Hy}(x,\tilde x),
    \]
    where we have used the fact that $c$ is $\eta$-minimizing in the second step. 
\end{proof}

\section*{Acknowledgements}
Most of this manuscript was written during my research stay with Robert McCann at the University of Toronto. I am grateful for the hospitality in Toronto and especially to Robert McCann for many fruitful discussions on this subject, in particular for the suggestion of Proposition \ref{li}.
I would also like to thank Stefan Suhr for the many discussions and valuable feedback, as well as for his reading of parts of the manuscript.

\bibliography{Lipschitz} 

@misc{Baer,
  author       = {Christian Bär},
  title        = {{Lecture Notes on Lorentzian Geometry}},
  note         = {\url{https://www.math.uni-potsdam.de/fileadmin/user_upload/Prof-Geometrie/Dokumente/Lehre/Lehrmaterialien/skript-LorGeo.pdf}, 30.07.2025},
}

@BOOK{DoCarmo,
  title     = "Riemannian Geometry",
  author    = "Do Carmo, Manfredo P",
  publisher = "Birkhauser Boston",
  year      =  2013,
  address   = "Secaucus, NJ, USA",
  language  = "en"
}

@book {Ehrlich,
    AUTHOR = {Beem, John K. and Ehrlich, Paul E. and Easley, Kevin L.},
     TITLE = {Global {L}orentzian geometry},
    SERIES = {Monographs and Textbooks in Pure and Applied Mathematics},
    VOLUME = {202},
   EDITION = {Second},
 PUBLISHER = {Marcel Dekker, Inc., New York},
      YEAR = {1996},
     PAGES = {xiv+635},
      ISBN = {0-8247-9324-2},
   MRCLASS = {53C50 (53-02 83-02)},
  MRNUMBER = {1384756},
MRREVIEWER = {Peter\ R.\ Law},
}

@BOOK{ONeill,
  title     = "{Semi-Riemannian} geometry with applications to relativity:
               Volume 103",
  author    = "O'Neill, Barrett",
  publisher = "Academic Press",
  year      =  1983,
  address   = "San Diego, CA, USA",
  language  = "en"
}

@article {McCann2,
    AUTHOR = {McCann, Robert J.},
     TITLE = {Displacement convexity of {B}oltzmann's entropy characterizes
              the strong energy condition from general relativity},
   JOURNAL = {Camb. J. Math.},
  FJOURNAL = {Cambridge Journal of Mathematics},
    VOLUME = {8},
      YEAR = {2020},
    NUMBER = {3},
     PAGES = {609--681},
      ISSN = {2168-0930,2168-0949},
   MRCLASS = {53C50 (49Q22 53C21 58Z05 82C35 83C99)},
  MRNUMBER = {4192570},
       DOI = {10.4310/CJM.2020.v8.n3.a4},
       URL = {https://doi.org/10.4310/CJM.2020.v8.n3.a4},
}

@article{Minguzzi,
  title={Lorentzian causality theory},
  author={Ettore Minguzzi},
  journal={Living Reviews in Relativity},
  year={2019},
  volume={22},
  pages={1-202},
  url={https://api.semanticscholar.org/CorpusID:195343105}
}

@book {Klingenberg,
    AUTHOR = {Klingenberg, Wilhelm},
     TITLE = {Riemannian geometry},
    SERIES = {De Gruyter Studies in Mathematics},
    VOLUME = {1},
 PUBLISHER = {Walter de Gruyter \& Co., Berlin-New York},
      YEAR = {1982},
     PAGES = {x+396},
      ISBN = {3-11-008673-5},
   MRCLASS = {53-02 (53C22 58-02 58E10 58F17)},
  MRNUMBER = {666697},
MRREVIEWER = {Alan\ West},
}

@article{Metsch3,
      title={On the locus of multiple maximizing geodesics on a globally hyperbolic spacetime}, 
      author={Alec Metsch},
      journal={arXiv preprint arXiv:2507.22737},
      year={2025},
      eprint={2507.22737},
      archivePrefix={arXiv},
      primaryClass={math.OC},
      url={https://arxiv.org/abs/2507.22737}, 
}

@article {Figalli/Rifford/Villani,
    AUTHOR = {Figalli, A. and Rifford, L. and Villani, C.},
     TITLE = {Tangent cut loci on surfaces},
   JOURNAL = {Differential Geom. Appl.},
  FJOURNAL = {Differential Geometry and its Applications},
    VOLUME = {29},
      YEAR = {2011},
    NUMBER = {2},
     PAGES = {154--159},
      ISSN = {0926-2245,1872-6984},
   MRCLASS = {53C20 (53C22)},
  MRNUMBER = {2784296},
MRREVIEWER = {Luca\ Granieri},
       DOI = {10.1016/j.difgeo.2011.02.002},
       URL = {https://doi.org/10.1016/j.difgeo.2011.02.002},
}

@article {Itoh/Tanaka,
    AUTHOR = {Itoh, Jin-ichi and Tanaka, Minoru},
     TITLE = {The {L}ipschitz continuity of the distance function to the cut
              locus},
   JOURNAL = {Trans. Amer. Math. Soc.},
  FJOURNAL = {Transactions of the American Mathematical Society},
    VOLUME = {353},
      YEAR = {2001},
    NUMBER = {1},
     PAGES = {21--40},
      ISSN = {0002-9947,1088-6850},
   MRCLASS = {53C20 (26A16 28A78 53C22)},
  MRNUMBER = {1695025},
MRREVIEWER = {James\ J.\ Hebda},
       DOI = {10.1090/S0002-9947-00-02564-2},
       URL = {https://doi.org/10.1090/S0002-9947-00-02564-2},
}

@article {Li/Nirenberg,
    AUTHOR = {Li, Yanyan and Nirenberg, Louis},
     TITLE = {The distance function to the boundary, {F}insler geometry, and
              the singular set of viscosity solutions of some
              {H}amilton-{J}acobi equations},
   JOURNAL = {Comm. Pure Appl. Math.},
  FJOURNAL = {Communications on Pure and Applied Mathematics},
    VOLUME = {58},
      YEAR = {2005},
    NUMBER = {1},
     PAGES = {85--146},
      ISSN = {0010-3640,1097-0312},
   MRCLASS = {35F20 (53C60)},
  MRNUMBER = {2094267},
MRREVIEWER = {Jie\ Yang},
       DOI = {10.1002/cpa.20051},
       URL = {https://doi.org/10.1002/cpa.20051},
}

@article {Castelpietra/Rifford,
    AUTHOR = {Castelpietra, Marco and Rifford, Ludovic},
     TITLE = {Regularity properties of the distance functions to conjugate
              and cut loci for viscosity solutions of {H}amilton-{J}acobi
              equations and applications in {R}iemannian geometry},
   JOURNAL = {ESAIM Control Optim. Calc. Var.},
  FJOURNAL = {ESAIM. Control, Optimisation and Calculus of Variations},
    VOLUME = {16},
      YEAR = {2010},
    NUMBER = {3},
     PAGES = {695--718},
      ISSN = {1292-8119,1262-3377},
   MRCLASS = {49L25 (35F20 53C22)},
  MRNUMBER = {2674633},
MRREVIEWER = {Marino\ Belloni},
       DOI = {10.1051/cocv/2009020},
       URL = {https://doi.org/10.1051/cocv/2009020},
}

@article {Whitehead,
    AUTHOR = {Whitehead, J. H. C.},
     TITLE = {On the covering of a complete space by the geodesics through a
              point},
   JOURNAL = {Ann. of Math. (2)},
  FJOURNAL = {Annals of Mathematics. Second Series},
    VOLUME = {36},
      YEAR = {1935},
    NUMBER = {3},
     PAGES = {679--704},
      ISSN = {0003-486X,1939-8980},
   MRCLASS = {99-04},
  MRNUMBER = {1503245},
       DOI = {10.2307/1968651},
       URL = {https://doi.org/10.2307/1968651},
}

@article {Gluck/Singer,
    AUTHOR = {Gluck, Herman and Singer, David},
     TITLE = {Scattering of geodesic fields. {I}},
   JOURNAL = {Ann. of Math. (2)},
  FJOURNAL = {Annals of Mathematics. Second Series},
    VOLUME = {108},
      YEAR = {1978},
    NUMBER = {2},
     PAGES = {347--372},
      ISSN = {0003-486X},
   MRCLASS = {53C20 (57R70)},
  MRNUMBER = {506991},
MRREVIEWER = {John\ Bolton},
       DOI = {10.2307/1971170},
       URL = {https://doi.org/10.2307/1971170},
}

@article {Hebda,
    AUTHOR = {Hebda, James J.},
     TITLE = {Metric structure of cut loci in surfaces and {A}mbrose's
              problem},
   JOURNAL = {J. Differential Geom.},
  FJOURNAL = {Journal of Differential Geometry},
    VOLUME = {40},
      YEAR = {1994},
    NUMBER = {3},
     PAGES = {621--642},
      ISSN = {0022-040X,1945-743X},
   MRCLASS = {53C20},
  MRNUMBER = {1305983},
MRREVIEWER = {Sheila\ Carter},
       URL = {http://projecteuclid.org/euclid.jdg/1214455780},
}

@article {Itoh,
    AUTHOR = {Itoh, Jin-ichi},
     TITLE = {The length of a cut locus on a surface and {A}mbrose's
              problem},
   JOURNAL = {J. Differential Geom.},
  FJOURNAL = {Journal of Differential Geometry},
    VOLUME = {43},
      YEAR = {1996},
    NUMBER = {3},
     PAGES = {642--651},
      ISSN = {0022-040X,1945-743X},
   MRCLASS = {53C20 (28A75)},
  MRNUMBER = {1412679},
MRREVIEWER = {James\ J.\ Hebda},
       URL = {http://projecteuclid.org/euclid.jdg/1214458326},
}

@article {Cordero/McCann/Schmuckenschlaeger,
    AUTHOR = {Cordero-Erausquin, Dario and McCann, Robert J. and
              Schmuckenschl\"ager, Michael},
     TITLE = {A {R}iemannian interpolation inequality \`a{} la {B}orell,
              {B}rascamp and {L}ieb},
   JOURNAL = {Invent. Math.},
  FJOURNAL = {Inventiones Mathematicae},
    VOLUME = {146},
      YEAR = {2001},
    NUMBER = {2},
     PAGES = {219--257},
      ISSN = {0020-9910,1432-1297},
   MRCLASS = {58E35 (28C99 60E15)},
  MRNUMBER = {1865396},
MRREVIEWER = {C\'edric\ Villani},
       DOI = {10.1007/s002220100160},
       URL = {https://doi.org/10.1007/s002220100160},
}

@article {Sakai,
    AUTHOR = {Sakai, Takashi},
     TITLE = {On the structure of cut loci in compact {R}iemannian symmetric
              spaces},
   JOURNAL = {Math. Ann.},
  FJOURNAL = {Mathematische Annalen},
    VOLUME = {235},
      YEAR = {1978},
    NUMBER = {2},
     PAGES = {129--148},
      ISSN = {0025-5831,1432-1807},
   MRCLASS = {53C20},
  MRNUMBER = {500710},
MRREVIEWER = {H.\ R.\ Gluck},
       DOI = {10.1007/BF01405010},
       URL = {https://doi.org/10.1007/BF01405010},
}

@article {Takeuchi,
    AUTHOR = {Takeuchi, Masaru},
     TITLE = {On conjugate loci and cut loci of compact symmetric spaces.
              {I}},
   JOURNAL = {Tsukuba J. Math.},
  FJOURNAL = {Tsukuba Journal of Mathematics},
    VOLUME = {2},
      YEAR = {1978},
     PAGES = {35--68},
      ISSN = {0387-4982,2423-821X},
   MRCLASS = {53C35},
  MRNUMBER = {531960},
MRREVIEWER = {Bang-yen\ Chen},
       DOI = {10.21099/tkbjm/1496158504},
       URL = {https://doi.org/10.21099/tkbjm/1496158504},
}
\bibliographystyle{acm}
\end{document}